\newcommand{\pr}[0]{\ensuremath{\textup{pr}}}
\newcommand{\pos}[0]{\ensuremath{\textup{pos}}}
\newcommand{\obs}[0]{\ensuremath{\textup{obs}}}
\newcommand{\opt}[0]{\ensuremath{\textup{opt}}}
\newcommand{\G}[0]{\ensuremath{\mathcal{G}}}
\newcommand{\X}[0]{\ensuremath{\mathfrak{X}}}
\begin{document}

\title{Optimal low-rank posterior covariance approximation in linear Gaussian inverse problems on Hilbert spaces}

\author[]{Giuseppe Carere\thanks{~giuseppe.carere@uni-potsdam.de, ORCID ID: 0000-0001-9955-4115} }
\author[]{Han Cheng Lie\thanks{~han.lie@uni-potsdam.de, ORCID ID: 0000-0002-6905-9903}}
\affil[]{~Institut f\"ur Mathematik, Universit\"at Potsdam, Potsdam OT Golm 14476, Germany}
\renewcommand\Affilfont{\small}

\date{}
\maketitle

\begin{abstract}
	For linear inverse problems with Gaussian priors and Gaussian observation noise, the posterior is Gaussian, with mean and covariance determined by the conditioning formula. The covariance is the central object for uncertainty quantification, as it encodes the variability of the posterior distribution and thus the uncertainty in the posterior mean estimate. Using the Feldman--Hajek theorem, we analyse the prior-to-posterior update and its low-rank approximation for infinite-dimensional Hilbert parameter spaces and finite-dimensional observations. We show that the posterior distribution differs from the prior on a finite-dimensional subspace, and construct low-rank approximations to the posterior covariance, while keeping the mean fixed. Since in infinite dimensions, not all low-rank covariance approximations yield approximate posterior distributions which are equivalent to the posterior and prior distribution, we characterise the low-rank covariance approximations which do yield this equivalence, and their respective inverses, or `precisions'. For such approximations, a family of measure approximation problems is solved by identifying the low-rank approximations which are optimal for various losses simultaneously. These loss functions include the family of R\'enyi divergences, the Amari $\alpha$-divergences for $\alpha\in(0,1)$, the Hellinger metric and the Kullback--Leibler divergence. Our results extend those of Spantini et al.\ (SIAM J.\ Sci.\ Comput.\ 2015) to Hilbertian parameter spaces, and provide theoretical underpinning for the construction of low-rank approximations of discretised versions of the infinite-dimensional inverse problem, by formulating discretisation independent results.
\end{abstract}

\vskip2ex
\textbf{Keywords}: nonparametric linear Bayesian inverse problems, Gaussian measures, low-rank operator approximation, generalised operator eigendecomposition, equivalent measure approximation
\vskip1ex
\noindent
\textbf{MSC codes}: 28C20, 47A58, 60G15, 62F15, 62G05

\section{Introduction}
\label{sec:introduction}

The class of Bayesian inverse problems with linear forward models and Gaussian priors plays a special role in the context of Bayesian statistical inference. For example, this class of linear Gaussian inverse problems appears naturally in the Laplace approximation of posteriors for nonlinear statistical inverse problems, and the classical Kalman filter can be understood as an iterative solution method for a sequence of linear Gaussian inverse problems. 
A particularly attractive feature of the class of linear Gaussian inverse problems is the availability of a closed-form solution, in the case where the parameter space is a separable Hilbert space. In this case, given a linear forward model $G$ with codomain $\mathbb{R}^{n}$ for some $n\in\mathbb{N}$, a realisation $y$ of the $\mathbb{R}^{n}$-valued data random variable 
\begin{equation*}
 Y=GX+\zeta,\quad \zeta\sim \mathcal{N}(0,\mathcal{C}_\obs),
\end{equation*}
and given a Gaussian prior $\mu_\pr=\mathcal{N}(m_\pr,\mathcal{C}_\pr)$ for the unknown parameter $X$, the solution $\mu_\pos$ to the Bayesian inverse problem is a Gaussian measure $\mathcal{N}(m_\pos,\mathcal{C}_\pos)$. The posterior mean $m_\pos$ and the posterior covariance $\mathcal{C}_\pos$ can be computed explicitly:
 \begin{align*}
	 m_\pos = m_\pr+ \mathcal{C}_{\pos}G^*\mathcal{C}_{\obs}^{-1}(y-Gm_\pr),\quad	 \mathcal{C}_{\pos} = \mathcal{C}_{\pr} - \mathcal{C}_{\pr} G^*(\mathcal{C}_{\obs}+G\mathcal{C}_{\pr} G^*)^{-1}G\mathcal{C}_{\pr},
\end{align*}
see e.g.\ \cite[Example 6.23]{Stuart2010}. It should be noted that $\mathcal{C}_\pos$ does not depend on the realisation $y$ of $Y$.

The availability of closed-form solutions to linear Gaussian inverse problems endows these problems with structure that makes them interesting objects to study in the context of measure approximation problems.
Measure approximation problems have become ubiquitous in modern statistical inference, often because one cannot sample exactly from the probability measure of interest, e.g.\ for computational cost reasons, or because one has only partial information about the measure of interest. 
In the context of Bayesian inverse problems, we can also consider measure approximation problems as a way to analyse the Bayesian prior-to-posterior update.

Computational studies of Bayesian inverse problems on high- but finite-dimensional parameter spaces show that the data is often `informative'---i.e., that the posterior differs from the prior---only on a subspace of much lower dimension than the dimension of the parameter space; see e.g.\ \cite{Flath2011}. In \cite{Cui2014}, a similar subspace is called a `likelihood-informed-subspace'. Since the posterior is obtained by reweighting the prior by the likelihood, this likelihood-informed subspace is determined by how the concentration of the likelihood interacts with the concentration of prior.
In the case of linear Gaussian inverse problems, the concentration of the likelihood and the concentration of the prior are described by the eigenpairs of the Hessian of the negative log-likelihood and the eigenpairs of the prior precision. These ideas are used in \cite{Flath2011} to identify low-rank approximations of the posterior covariance matrix. In \cite{Spantini2015}, the optimality of these posterior covariance approximations with respect to a family of spectral loss functions is shown. In particular, the leading generalised eigenvectors of the Hessian-prior precision matrix pencil build a hierarchy of nested low-dimensional subspaces on which the posterior differs from the prior. 
If only a few directions in the parameter space need to be stored to be able to approximate the posterior distribution well, then this can be done before observing the data, since these directions are independent of the data. In high but finite-dimensional parameter spaces, this leads to considerable computational and storage savings. Nowadays, the latter is important since read and write operations from memory often form the bottleneck in modern computational hardware, c.f.\ \cite{Ordentlich2025}.

So far, the existence of optimal low-rank approximations and likelihood informed subspaces for linear Gaussian prior-to-posterior updates has only been proven for posterior distributions on finite-dimensional parameter spaces. Such low-rank approximations are exploited in \cite{Bui-Thanh2012b,Bui-Thanh2013} to obtain computationally tractable uncertainty quantification in high-dimensional inverse problems. In these works it is noticed that the spectral decay of the Hessian of a discretised and linearised version of an inverse problem seems independent of the discretisation dimension. As a consequence, also the spectral decay of the prior-preconditioned Hessian is independent of the discritisation dimension. This observation is central in the effort of making the resolution of the inverse problem scalable. In order to provide theoretical underpinning for this behaviour, it is fundamental to formulate the approximation procedure centered around the prior-preconditioned Hessian directly on the native infinite-dimensional space. While \cite[Example 6.23]{Stuart2010} provides a formulation of the linear Gaussian inverse problem in infinite dimensions, in the generalisation of the optimal low-rank posterior covariance approximation analysed by \cite[Section 2]{Spantini2015} certain challenges appear.

\subsection{Challenges in infinite dimensions}
In the finite-dimensional context of \cite{Spantini2015}, the above equation updating $\mathcal{C}_\pr$ to $\mathcal{C}_\pos$ provides a starting point for the approximation procedure. Also the corresponding equation which updates $\mathcal{C}_\pr^{-1}$ to $\mathcal{C}_\pos^{-1}$, c.f.\ \cite[eq. (6.13a)]{Stuart2010}, provides a starting point for the approximation. These are called the `prior precision' and `posterior precision' respectively. An operator pencil involving $\mathcal{C}_\pr^{-1}$ is central in the result of \cite[Theorem 2.3]{Spantini2015}. When the prior distribution is nondegenerate, these can be interpreted as full-rank matrices, that is, finite-dimensional, hence bounded, linear operators. In infinite dimensions, $\mathcal{C}_\pr^{-1}$ and $\mathcal{C}_\pos^{-1}$ are no longer bounded, and they are not even defined on the entire parameter space. In fact, $\ran{\mathcal{C}_\pr^{1/2}}$, the range of the self-adjoint square root of $\mathcal{C}_\pr$, is called the `Cameron--Martin space' of the prior distribution and contains the domain of $\mathcal{C}_\pr^{-1}$. This space is a proper subspace of the parameter space in infinite dimensions, and with probability 1, draws from the prior distribution do not belong to this space. This makes the required analysis of approximations based on the prior-to-posterior precision update more delicate. 

Another complication of the infinite-dimensional setting is that, unlike in the finite-dimensional setting, not all approximations of the posterior mean and covariance result in approximate posterior measures that are equivalent to the exact posterior distribution, even if they have the same support. Here, `equivalent' means that the approximate posterior has a density with respect to the exact posterior distribution, and vice versa. Since the prior and the posterior distributions are equivalent for linear Gaussian inverse problems with finite-dimensional data, approximate posteriors which are not equivalent to the exact posterior are also not equivalent to the prior distribution. In fact, nonequivalent Gaussian measures on infinite-dimensional spaces are necessarily mutually singular. That is, they assign full measure to disjoint measurable sets, which is an undesirable property for the approximate posterior and exact posterior/prior distribution to have. Thus, an understanding of which approximate updates of the prior covariance lead to equivalent approximation posterior distributions, with probability 1 with respect to the data $Y$, is needed to construct approximate posterior measures equivalent to the exact posterior.

A third complication is that the analysis of the finite-dimensional setting in \cite{Spantini2015} relies on certain inherently finite-dimensional results and concepts. 
For example, in approximating the posterior covariance, a certain loss functional is used to measure the closeness of the approximate posterior covariance to the exact posterior covariance. The coercivity of this loss functional is used to prove some results in the finite-dimensional setting. However, in our infinite-dimensional formulation, the analogous coercivity statement does not hold.
Also, Fr\'echet differentiability of this loss functional, useful for finding extreme points of the loss, cannot be deduced in the same way as in the finite-dimensional case, as the latter case relies on the finite-dimensional result of \cite[Theorem 1.1]{Lewis1996}.

\subsection{Contributions}

This work provides a rigorous analysis for the infinite-dimensional version of the Bayesian prior-to-posterior covariance and precision updates and constructs optimal low-rank approximations thereof. We assume a linear Gaussian inverse problem in which the parameter space is a possibly infinite-dimensional separable Hilbert space, the observation space is finite-dimensional and the prior is nondegenerate and has mean zero. We identify optimal Gaussian approximations to the true posterior, keeping the mean fixed, using low-rank measure approximation problems. Our results extend the results of \cite{Spantini2015} that are developed for finite-dimensional parameter spaces, to the case where the parameter space is possibly infinite-dimensional. This shows a certain dimension independence of the results of \cite{Spantini2015}. In related work, see \cite{PartII}, we study low-rank posterior mean approximation, and give some insight on joint posterior mean and covariance approximation. We highlight main contributions of this paper.

The first main contribution is \Cref{prop:bayesian_feldman_hajek}. In particular:
\begin{itemize}
	\item 
		It formulates three operators and their relation in infinite dimensions. The three operators are important in the approximation procedure, and are given by the prior-preconditioned Hessian, the \emph{posterior}-preconditioned Hessian and the posterior covariance preconditioned with the prior precision. As the relations are one-to-one, these operators contain the same information. It was already noted in previous works in finite dimensions, see e.g. \cite[Proposition 10]{Jagalur-Mohan2021} and \cite[Section 3.4.1]{Huan2024}, that these operators and various other transformations of them contain the same information and are the central object for studying the quality of the finite-dimensional low-rank posterior approximation.
	\item
		It gives one-to-one relations between the above three operators and the Hilbert--Schmidt operator which mixes the prior and posterior covariance in the Feldman--Hajek theorem. The Feldman--Hajek theorem gives necessary and sufficient conditions for equivalence of Gaussian measures, and the connection with the three operators given here, shows that these operators essentially all quantify the amount of similarity and equivalence between the prior and the posterior distribution. This provides intuitive motivation for the importance of this family of operators in the study of optimal posterior approximation.
	\item
		It shows that this family of operators can be diagonalised in the common Cameron--Martin space of the prior and the posterior. In particular, this implies that the diagonalisations of the above family of operators have interpretations as operator pencils as in the finite-dimensional case.
	\item
		It shows that the prior and posterior distribution differ only on a finite-dimensional subspace, which is a subspace of the Cameron--Martin space of both the prior and posterior. Its dimension equals the rank of the Hessian of the negative log-likelihood, or equivalently, the rank of the forward model $G$. 
\end{itemize}

The second main contribution is given by \Cref{lemma:properties_equivalence} and \Cref{prop:range_of_K}. They are stated for an arbitrary Gaussian measure $\mu_{1}=\mathcal{N}(m_1,\mathcal{C}_1)$ with $\mathcal{C}_1$ injective. Among the low-rank updates of the covariance $\mathcal{C}_1$ and the precision $\mathcal{C}_1^{-1}$, these results characterise those low-rank updates which satisfy an equivalence property, namely that when keeping the mean fixed, they correspond to approximate distributions that are equivalent to $\mu_1$. Furthermore, these results also characterise the approximate precisions and covariances which correspond to respectively the low-rank covariance and precision updates satisfying this equivalence property. 
In the Bayesian context, \Cref{lemma:properties_equivalence} and \Cref{prop:range_of_K} also show that in infinite dimensions, not all updates of the prior covariance of the form considered in \cite{Spantini2015} satisfy this equivalence property, and not all updates of the prior covariance that do satisfy this property can be constructed as the inverse of an update of the prior precision considered in \cite{Spantini2015}. Our results give a necessary and sufficient condition on the range of the low-rank updates, under which such updates of the prior covariance do in fact satisfy the equivalence property. This provides a tool to inflate or deflate the covariance of a Gaussian measure while retaining access to Radon--Nikodym derivatives, e.g.\ to deflate prior covariance or inflate posterior covariance.

The third main contribution is to solve a family of Gaussian measure approximation problems in which we approximate the posterior covariance and keep the mean fixed, for example at the exact posterior mean. We consider various loss functions to measure the approximation error of the corresponding approximating Gaussian distribution, including the R\'enyi divergences, Amari $\alpha$-divergences for $\alpha\in(0,1)$, the Hellinger metric and the forward and reverse Kullback-Leibler divergence. These are all spectral loss functions in the sense that their dependence on the two measures is only via the spectrum of the operators in \Cref{prop:bayesian_feldman_hajek} mentioned above. We ensure that the resulting approximate posterior obtained by approximating the covariance and keeping the mean fixed is equivalent to the exact posterior. Since the posterior covariance and its low-rank approximations are independent of $y$, this equivalence holds for all possible realisations of the data simultaneously. 
Optimal solutions for the covariance approximation problem and necessary and sufficient conditions for their uniqueness are identified in \Cref{thm:opt_covariance_and_precision} and \Cref{cor:optimal_covariance_for_amari_and_hellinger}. 

\subsection{Related literature}

Low-rank approximation of posterior covariances for linear Gaussian inverse problems posed on finite-dimensional parameter spaces is studied in \cite{Flath2011}. In particular, \cite[eq. (5)]{Flath2011} presents a formula for a low-rank approximation of the posterior covariance that exploits spectral decay in the Hessian of the negative log-likelihood, and \cite[eq. (4)]{Flath2011} indicates that the error of this low-rank approximation is related to the tail of the spectrum of the prior-preconditioned Hessian of the negative log-likelihood. 

In \cite{Spantini2015}, a precise formulation of the low-rank posterior covariance approximation problem is given and rigorously analysed, for linear Gaussian inverse problems on finite-dimensional parameter spaces. The low-rank approximation for the posterior covariance proposed in \cite{Flath2011} is shown to be an optimal solution for a family of spectral loss functions that include as special cases the Kullback--Leibler divergence and Hellinger distance between Gaussians with the same mean but different covariances. 
This approach is further developed for goal-oriented linear Gaussian inverse problems in \cite{Spantini2017}. Dimension reduction methods for linear Gaussian inverse problems using projections of the data are studied using generalised eigenvalue problems in \cite{Giraldi2018}. The Kullback--Leibler divergence and mutual information are used to quantify the error of the approximating measures. 

Dimension reduction for Bayesian inverse problems with possibly nonlinear forward models and non-Gaussian priors appears to have been first analysed in \cite{Zahm2022}. Joint dimension reduction of parameter and data is studied in \cite{Baptista2022}, for possibly nonlinear forward models and non-Gaussian priors. The results of \cite{Zahm2022} are further improved in \cite{Li2024a,Li2024b}, which derived error bounds in terms of Amari $\alpha$-divergences 
\begin{equation*}
	D_{\am,\alpha}(\nu\Vert \mu)\coloneqq \frac{1}{\alpha(\alpha-1)}\biggr(\int \left(\frac{\mathrm{d}\nu}{\mathrm{d}\mu}\right)^\alpha \mathrm{d}\mu-1\biggr) ,
\end{equation*}
for probability measures $\nu$ and $\mu$ such that $\nu\ll\mu$ and $0<\alpha\leq 1$; see \cite[eq. (7)]{Li2024a}.
The above-cited works consider only the setting of finite-dimensional parameter spaces, and do not consider infinite-dimensional parameter spaces. While \cite{Spantini2015} provides explicit formulas for the approximation errors, \cite{Zahm2022,Baptista2022,Li2024a,Li2024b} provide only error bounds. In infinite dimensions, \cite{Cui2016b} proposes a method for sampling the posterior based on the infinite-dimensional likelihood-informed subspace, and identifies the prior-preconditioned Hessian as the fundamental object. However, a rigorous treatment of optimality is not present.

In \cite{Pinski2015}, Kullback--Leibler approximation of probability measures on infinite-dimensional Polish spaces using Gaussians is studied from the calculus of variations perspective. The main results of this work concern existence of minimisers and convergence of a proposed minimisation scheme for identifying the best approximation in a class of approximating Gaussian measures. In our setting, the posterior is already Gaussian, and the approximation classes we consider differ from those in \cite{Pinski2015}.

In \cite[Section 3]{Agapiou2017}, importance sampling for linear Gaussian inverse problems posed on separable Hilbert spaces is considered. The main result is to identify two types of intrinsic dimension, such that if both dimensions are finite, then absolute continuity of the posterior with respect to the prior holds, and thus importance sampling may be possible. Also \cite{Alexanderian2016} considers the setting of linear Gaussian inverse problems on separable Hilbert spaces. In this work, the aim is to analyse the Kullback-Leibler divergence from prior to posterior for optimal experimental design. The focus of our work is not to determine whether importance sampling is possible or study optimal experimental design, but rather to identify low-dimensional structure in the Bayesian prior-to-posterior update.

\subsection{Outline}

We introduce key notation in \Cref{sec:notation} below. In \Cref{sec:formulation}, we begin by recalling the infinite-dimensional formulation of the linear Gaussian inverse problem, and formulate the posterior covariance and posterior precision approximation classes that define our measure approximation problems. In \Cref{sec:equivalence_and_divergences_between_gaussian_measures} we recall the Feldman--Hajek theorem, which characterises when two Gaussian measures are equivalent, and recall expressions for the Kullback--Leibler divergence and R\'{e}nyi divergence of two equivalent Gaussians. We state the first main result of this paper, \Cref{prop:bayesian_feldman_hajek}, which identifies the generalised eigenpairs of the three operator pencils mentioned in the introduction and identifies the finite-dimensional subspace on which the posterior differs from the prior. In \Cref{sec:optimal_approximation_covariance}, we consider measure approximation problems where the posterior covariance is approximated and identify solutions in \Cref{thm:opt_covariance_and_precision} and \Cref{cor:optimal_covariance_for_amari_and_hellinger}. Auxiliary results are presented in \Cref{sec:theoretical_facts}, and proofs of the results in this work can be found in \Cref{sec:proofs_of_results}.

\subsection{Notation}
\label{sec:notation}

Let $\H$ be a separable Hilbert space over $\R$, i.e. a linear space endowed with an inner product $\langle \cdot,\cdot\rangle$ which induces a complete topology and norm $\norm{\cdot}$. Let $(e_i)_{i}$ be an orthonormal basis (ONB) of $\H$, where $i$ ranges over a countable index set because $\H$ is separable.
Let also $\mathcal{K}$ be a separable Hilbert space over $\R$. By $\B(\H,\mathcal{K})$, $\B_0(\H,\mathcal{K})$, and $\B_{00}(\H,\mathcal{K})$, we denote the vector spaces of linear operators with domain $\H$ and codomain $\mathcal{K}$ that are bounded, compact, and finite-rank respectively, endowed with the operator norm $\norm{\cdot}$.
We define a finite-rank operator to be an operator that is bounded and has finite-dimensional range.
By $\B_{00,r}(\H,\mathcal{K})$ we denote the set of finite-rank operators that have rank at most $r\in\N$.
This set is not a vector space since the rank is not preserved under linear combinations.
If $\mathcal{K}=\H$ then we omit the second argument in the spaces above, e.g.\ $\mathcal{B}(\H)\coloneqq\mathcal{B}(\H,\H)$. 
We write $L_1(\H)$ and $L_2(\H)$ to denote the vector spaces of trace-class and Hilbert--Schmidt operators, and $\norm{\cdot}_{L_1(\H)}$ and $\norm{\cdot}_{L_2(\H)}$ to denote their respective norms. We also equip $L_2(\H)$ with the Hilbert--Schmidt inner product $\langle\cdot,\cdot\rangle_{L_2(\H)}$.

For $T\in\B(\H,\mathcal{K})$, we denote the adjoint of $T$ by $T^\ast\in\mathcal{B}(\mathcal{K},\H)$.
The space $\B(\mathcal{H})_\R$ denotes the space of bounded operators from $\H$ to itself that are additionally self-adjoint. The spaces $\mathcal{B}_0(\H)_\R$, $\mathcal{B}_{00}(\H)_\R$, $L_1(\H)_\R$ and $L_2(\H)_\R$ and the set $\B_{00,r}(\H)_\R$ for $r\in\N$ are defined similarly.

For $T\in\B(\mathcal{H})$ we write $T\geq 0$ and $T>0$ if $T$ is nonnegative or positive respectively, i.e.\ if respectively $\langle Th,h\rangle\geq 0$ or $\langle Th,h\rangle >0$ for all $h\in\mathcal{H}\setminus\{0\}$.
If $T\in\B(\H)_\R$ is nonnegative, then $T^{1/2}$ will denote its nonnegative self-adjoint square root, i.e.\ $T^{1/2}\in \B(\mathcal{H})_\R$. Since $T^*T$ is self-adjoint and nonnegative for any $T\in\B(\H)$, we may define $\abs{T}\coloneqq (T^*T)^{1/2}$.

For $h\in\mathcal{H}$ and $k\in\mathcal{K}$, we interpret the tensor product $k\otimes h$ as a rank-1 operator in $\B(\mathcal{H},\mathcal{K})$, and this operator is $\tilde{h}\mapsto \langle h,\tilde{h}\rangle k$. For $T\in\B_0(\mathcal{H,\mathcal{K}})$, $T$ can be written in its `singular value decomposition' (SVD) as a series of rank-1 operators $T=\sum_{i}^{}\sigma_i k_i\otimes h_i$ where $(\sigma_i)_i$ is nonincreasing and nonnegative and $(h_i)_i$ and $(k_i)_i$ are orthonormal sequences in $\mathcal{H}$ and $\mathcal{K}$ respectively, see also \Cref{lemma:operator_svd}.

A linear operator $T$ from $\mathcal{H}$ to $\mathcal{K}$ which is not necessarily bounded is indicated by $T:\mathcal{H}\rightarrow\mathcal{K}$. Furthermore, $T$ is densely defined if its domain $\dom{T}$ is dense in $\mathcal{H}$. We also write $T:\dom{T}\subset\mathcal{H}\rightarrow\mathcal{K}$ to emphasise the domain of definition of $T$. Thus, $T:\mathcal{H}\rightarrow\mathcal{K}$ generalises the notion of $T\in\B(\H,\mathcal{K})$ in two ways: $\dom{T}$ may be a proper subspace of $\mathcal{H}$ and $T$ need not be bounded on $\dom{T}$.
If $T:\mathcal{H}\rightarrow\mathcal{K}$, $S:\mathcal{H}\rightarrow\mathcal{K}$ and $U:\mathcal{K}\rightarrow\mathcal{Z}$ for some separable Hilbert space $\mathcal{Z}$, then $T+S:\mathcal{H}\rightarrow\mathcal{K}$ is defined on $\dom{T}\cap\dom{S}$ and $UT:\mathcal{H}\rightarrow\mathcal{Z}$ is defined on $T^{-1}(\dom{U})$.

Self-adjoint unbounded operators are recalled in \Cref{def:unbounded_ajoint} and \Cref{def:unbounded_self_adjoint}.

For a densely defined linear operator $S:\dom{S}\subset\mathcal{H}\rightarrow\mathcal{K}$ with domain $\dom{S}\subset\H$,
an extension $T$ of $S$ is an operator defined on $\dom{T}\subset\H$, such that $\dom{S}\subset\dom{T}$ and the restriction of $T$ to $\dom{S}$ agrees with $S$. 
We shall write $S\subset T$ to denote that $T$ is an extension of $S$. If $T$ is bounded, then $T$ is the unique extension of $S$ to all of $\mathcal{H}$.

We let $\Lambda:L_2(\mathcal{H})_\R\rightarrow\ell^2(\R)$ be a function that sends a self-adjoint Hilbert--Schmidt operator to its square-summable eigenvalue sequence. One possible ordering labels the negative eigenvalues with the even integers and the positive eigenvalues with the odd integers, both ordered decreasingly in absolute value. A different choice is to order the eigenvalues in order of decreasing absolute value. Note that the choice of ordering of $\Lambda$ in two operators $T,S\in L_2(\mathcal{H})_\R$ is allowed to be different. The precise ordering of eigenvalues that $\Lambda$ assigns to an operator is not important, as we shall only consider compositions of $\Lambda$ with functions on $\ell^2(\R)$ that are permutation invariant.
In analogy to the eigenvalue map $\Lambda:L_2(\mathcal{H})_\R\rightarrow\ell^2(\R)$, we define $\Lambda^m:L_2(\mathcal{Z})_\R\rightarrow\R^m$ for any $m$-dimensional subspace $\mathcal{Z}\subset\mathcal{H}$ for $m\in\N$ to be the map that sends $X\in L_2(\mathcal{Z})_\R$ to its eigenvalue sequence, ordered in a nonincreasing way.

We denote equivalence of two measures $\mu$ and $\nu$ by $\mu\sim\nu$. That is, $\mu\sim\nu$ if $\mu$ and $\nu$ are absolutely continuous with respect to each other. The measure $\nu$ is absolutely continuous with respect to $\mu$ if $\mu(A)=0$ implies $\nu(A)=0$ for every measurable set $A$. We denote the support of a measure $\mu$ by $\supp{\mu}$.

We write $X\sim \mu$ to denote that the distribution of a random variable $X$ is $\mu$. If $X$ has a Gaussian distribution on $\mathcal{H}$, i.e.\ $\langle X,h\rangle$ is a one-dimensional Gaussian random variable for each $h\in\mathcal{H}$, then we write $X\sim \mathcal{N}(m,\mathcal{C})$, where $m=\mathbb{E}X$ is the mean of $X$ and $\langle \mathcal{C}h,k\rangle = \mathbb{E}\langle h,X-m\rangle\langle X-m,k\rangle$ defines the covariance $\mathcal{C}$ of $X$.
The `precision' of $\mathcal{N}(m,\mathcal{C})$ is $\mathcal{C}^{-1}$.

For $I$ a non-empty interval in $\R$, $\ell^2(I)$ denotes the space of square-summable sequences, i.e.\ $\ell^2(I)=\{(x_i)_{i\in\N}\subset I\ :\ \sum_{i\in\N} \abs{x_i}^2<\infty\}$. If $I\subset\R$ is open, then $C^1(I)$ denotes the set of continuously differentiable functions on $I$.

We write `$a\leftarrow b$' to denote the replacement of $a$ with $b$.

\section{Low-rank posterior covariance approximations}
\label{sec:formulation}

Let $\H$ be a separable Hilbert space over $\R$ of dimension $\dim{\H}\leq\infty$, which models the parameter space. Consider the observation model defined by a continuous linear forward model $G\in\mathcal{B}(\mathcal{H},\R^n)$ and additive Gaussian observation error
\begin{align}
	\label{eqn:observation_model}
	Y = Gx^\dagger + \zeta,\quad \zeta\sim\mathcal{N}(0,\mathcal{C}_{\obs}).
\end{align}
The covariance $\mathcal{C}_\obs\in \B(\R^n)_\R$ of the observation noise $\zeta$ is positive, and from a frequentist nonparametric perspective, $x^\dagger\in\mathcal{H}$ is the unknown true data-generating parameter to recover after observing a realisation of $Y$. By the Gaussian assumption on the noise, it follows that for any fixed $x\in\mathcal{H}$, the likelihood of observing $y$ is proportional to $\exp(-\frac{1}{2}\norm{\mathcal{C}_\obs^{-1/2}(y-Gx)}^2 )$. The Hessian of the negative log-likelihood with respect to $x$ is
\begin{align}
	H = G^*\mathcal{C}_\obs^{-1}G\in\B_{00,n}(\mathcal{H})_\R.
	\label{eqn:hessian}
\end{align}
It follows from $H={G}^*\mathcal{C}_{\obs}^{-1/2}(G^*\mathcal{C}_{\obs}^{-1/2})^*$ that $H$ is self-adjoint and nonnegative.

We adopt the Bayesian perspective to the problem of inferring $x^\dagger$ given the observation $y$ of $Y$, by modeling the unknown $x^\dagger$ with an $\mathcal{H}$-valued random variable $X$. Its distribution, the prior distribution, is taken to be a Gaussian measure $\mu_{\pr}=\mathcal{N}(0,\mathcal{C}_{\pr})$ on $\mathcal{H}$ and we assume that $X$ and $\zeta$ are independent. As the covariance of a Gaussian measure on $\H$, $\mathcal{C}_\pr$ lies in $L_1(\mathcal{H})_\R$ and $\mathcal{C}_\pr\geq 0$, hence $\mathcal{C}_\pr$ has a unique nonnegative square root $\mathcal{C}_\pr^{1/2}\in L_2(\mathcal{H})_\R$. In this work, we make the following assumption.

\begin{assumption}
	We assume that the prior distribution $\mu_\pr=\mathcal{N}(0,\mathcal{C}_\pr)$ is nondegenerate on $\H$.
\end{assumption}

Nondegeneracy of $\mu_\pr$ implies that $\supp{(\mu_\pr)}=\mathcal{H}$, see e.g.\ \cite[Definition 3.6.2]{bogachev_gaussian_1998}, and that $\mathcal{C}_\pr>0$ and $\mathcal{C}_\pr^{1/2}>0$, see \Cref{lemma:covariance_properties}. In particular, $\mathcal{C}_\pr$ and $\mathcal{C}_\pr^{1/2}$ are injective by \Cref{lemma:positive_is_injective_nonnegative}. Hence the inverses $\mathcal{C}_\pr^{-1}$ and $\mathcal{C}_\pr^{-1/2}$ are well-defined bijections $\ran{\mathcal{C}}_\pr\rightarrow\mathcal{H}$ and $\ran{\mathcal{C}}_\pr^{1/2}\rightarrow\mathcal{H}$ respectively. They are self-adjoint, c.f.\ \Cref{def:unbounded_ajoint} and \Cref{lemma:symmetric_operators}\ref{item:symmetric_operators_2}, and if $\dim{\H}=\infty$, then they are unbounded. The Cameron--Martin space of $\mu_{\pr}$ is the Hilbert space $(\ran{\mathcal{C}}_\pr^{1/2},\norm{\cdot}_{\mathcal{C}_\pr^{-1}})$, see e.g.\ \cite[p. 293]{bogachev_gaussian_1998}, where the Cameron--Martin norm of an element $h\in\ran{\mathcal{C}_\pr^{1/2}}$ is defined by $\norm{h}_{\mathcal{C}_\pr^{-1}}\coloneqq\norm{\mathcal{C}_\pr^{-1/2}h}$. As $\mathcal{C}_\pr$ is injective and compact, $\ran{\mathcal{C}}_\pr^{1/2}$ is dense in $\mathcal{H}$ and if $\dim{\H}=\infty$, then $\ran{\mathcal{C}}_\pr^{1/2}$ is a proper dense subspace of $\mathcal{H}$.

A common way to construct covariance operators on function spaces is to consider inverses of Laplacian-like operators, c.f.\ \cite{Stuart2010}. This approach is used in computation; see e.g. \cite{Bui-Thanh2013}.

Given a realisation $y$ of the random variable $Y$ defined by the observation model, the posterior distribution $\mu_\pos=\mu_\pos(y)$ of $X$ given $Y=y$ is the Gaussian measure $\mathcal{N}(m_\pos,\mathcal{C}_\pos)$, where
\begin{subequations}
	\label{eqn:update_equations}
	\begin{align}
		\label{eqn:pos_mean}
		m_\pos = m_{\pos}(y) &= \mathcal{C}_{\pos}G^*\mathcal{C}_{\obs}^{-1}y\in\ran{\mathcal{C}_\pos},\\
		\label{eqn:pos_covariance}
		\mathcal{C}_{\pos} &= \mathcal{C}_{\pr} - \mathcal{C}_{\pr} G^*(\mathcal{C}_{\obs}+G\mathcal{C}_{\pr} G^*)^{-1}G\mathcal{C}_{\pr} ,\\
		\label{eqn:pos_precision}
		\mathcal{C}_\pos^{-1} &= \mathcal{C}_\pr^{-1} + G^*\mathcal{C}_\obs^{-1}G = \mathcal{C}_\pr^{-1}+H,
	\end{align}
\end{subequations}
see e.g.\ \cite[Example 6.23]{Stuart2010}. Equation \eqref{eqn:pos_precision} should be understood to imply the following two facts: $\ran{\mathcal{C}}_\pos\coloneqq\dom{\mathcal{C}_\pr^{-1}+H}=\ran{\mathcal{C}}_\pr$, and $\mathcal{C}_\pr^{-1}+H:\ran{\mathcal{C}_\pr}\rightarrow\H$ is the inverse of the operator $\mathcal{C}_{\pos}$ given in \eqref{eqn:pos_covariance}.
While all nondegenerate Gaussians are equivalent in a finite-dimensional setting, this is no longer true in an infinite-dimensional setting, where in fact it holds that nondegenerate Gaussians that are not equivalent must be mutually singular. By \cite[Theorem 6.31]{Stuart2010}, $\mu_\pos$ and $\mu_\pr$ are in fact equivalent. In particular, $\mu_\pos$ is a nondegenerate measure and the above properties of $\mathcal{C}_\pr$ also hold for $\mathcal{C}_\pos$.
We shall construct approximations to $\mu_\pos$ that are equivalent to $\mu_\pos$. 

The equations in \eqref{eqn:update_equations} motivate certain Gaussian approximations of $\mu_\pos$ that, as we shall see, retain equivalence to $\mu_\pos$. By \eqref{eqn:pos_covariance}, $\mathcal{C}_\pos$ is an update of $\mathcal{C}_\pr$ by a nonpositive self-adjoint operator $ - \mathcal{C}_{\pr} G^*(\mathcal{C}_{\obs}+G\mathcal{C}_{\pr} G^*)^{-1}G\mathcal{C}_{\pr}$. The range of this update is contained in $\ran{\mathcal{C}_\pr}$ and the rank of this update is at most $n$ since $G\in\B(\mathcal{H},\R^n)$. For $r\in\N$, this motivates the rank-constrained approximation of $\mathcal{C}_\pos$ by updating $\mathcal{C}_\pr$ using nonpositive self-adjoint operators of the form $-KK^*$, for $K\in\B(\R^r,\mathcal{H})$ with $\ran{K}\subset\ran{\mathcal{C}_\pr}$ and $\mathcal{C}_\pr-KK^*>0$. That is, we consider
\begin{align}
	\label{eqn:class_approx_covariance}
	\mathscr{C}_r\coloneqq\left\{ \mathcal{C}_{\pr}-KK^*>0:\ K\in\B(\R^r,\H), \ran{K}\subset\ran{\mathcal{C}_\pr}\right\},\quad r\in\N
\end{align}
Since $\mathcal{C}_\pr G^*(\mathcal{C}_\obs+G\mathcal{C}_\obs G^*)^{-1}G\mathcal{C}_\pr\in\B_{00,n}(\mathcal{H})_\R$, we have $\mathcal{C}_\pos\in\mathscr{C}_r$ for all $r\geq r_0$ by \eqref{eqn:pos_covariance}, where $r_0\coloneqq\rank{\mathcal{C}_\pr G^*(\mathcal{C}_\obs+G\mathcal{C}_\obs G^*)^{-1}G\mathcal{C}_\pr}\leq n$.

Alternatively, we can consider approximations of $\mu_\pos$ by constructing rank-constrained updates of the prior precision $\mathcal{C}_\pr^{-1}$. By \eqref{eqn:pos_precision}, $\mathcal{C}_\pr^{-1}$ is an update of $\mathcal{C}_\pr^{-1}$ by the Hessian $H$, which is self-adjoint, nonnegative, and has rank at most $n$. For $r\in\N$, we can therefore consider the class of approximations of $\mathcal{C}_\pos^{-1}$ of the form $\mathcal{C}_\pr^{-1}+UU^*$, for $U\in\B(\R^r,\mathcal{H})$. That is, we consider
\begin{align}
	\label{eqn:class_approx_precision}
	\mathscr{P}_r\coloneqq\left\{\mathcal{C}_\pr^{-1}+UU^*:\ U\in\B(\R^r,\H)\right\},\quad r\in\N.
\end{align}
Since $H\in\B_{00,n}(\mathcal{H})_\R$, $\mathcal{C}_\pos^{-1}\in\mathscr{P}_r$ for all $r\leq r_0$ with ${r_0}=\rank{H}$. The updates $\mathcal{C}_\pr^{-1}+UU^*$ in \eqref{eqn:class_approx_precision} are defined on $\ran{\mathcal{C}_\pr}$, by definition of the sum of unbounded operators, c.f.\ \Cref{sec:notation}.

We note that every operator $SS^*$ for $S\in\B(\R^r,\mathcal{H})$ is a nonnegative, self-adjoint operator with rank at most $r$, and that every nonnegative operator $T\in \B_{00,r}(\mathcal{H})_\R$ can be written in this way. Therefore, we could write the above approximations as $\mathcal{C}_\pr-T$ or $\mathcal{C}_\pr^{-1}+T$ for nonnegative $T\in\mathcal{B}_{00,r}(\mathcal{H})$, such that $\mathcal{C}_\pr-T$ is positive and maps into $\ran{\mathcal{C}_\pr}$. However, the set of nonnegative elements of $\mathcal{B}_{00,r}(\mathcal{H})$ is not convex, since rank is not preserved by convex combinations. By replacing $T$ by $SS^*$, we avoid formulating an optimisation problem over a nonconvex set. Indeed, $\mathcal{B}(\R^r,\mathcal{H})$ is not only convex but is also a Banach space.

The classes $\mathscr{C}_r$ and $\mathscr{P}_r$ are generalisations to a possibly infinite-dimensional setting of those considered in \cite{Spantini2015}.
We search for low-rank approximations of the objects in \eqref{eqn:pos_covariance} and \eqref{eqn:pos_precision}, where `low-rank' refers to the fact that we consider approximations in the classes $\mathscr{C}_r$ and $\mathscr{P}_r$, for $r<n$ respectively. \cite[Section 8]{PartII} contains two examples which can be analysed in the framework described in this section.

\section{Equivalence and Divergences between Gaussian measures}
\label{sec:equivalence_and_divergences_between_gaussian_measures}

Since our approximation problems are formulated in the context of statistical inverse problems, and since absolute continuity of the posterior with respect to the prior is important for statistical inference, we require our approximate posteriors to be equivalent to $\mu_\pos$. In \Cref{subsec:equivalence}, we recall the Feldman--Hajek theorem which gives necessary and sufficient conditions for Gaussian measures to be equivalent, and apply this theorem to the setting described in \Cref{sec:formulation}. Then, in \Cref{subsec:divergences}, we consider certain divergences between equivalent Gaussian measures, which we use to measure the approximation quality of low-rank posterior approximations.

Unless otherwise specified, the proofs of the results below are given in \Cref{subsec:proofs_for_formulation}.

\subsection{Equivalence between Gaussian measures}
\label{subsec:equivalence}

Given a fixed nondegenerate reference Gaussian measure, the set of equivalent Gaussian measures is described by the Feldman--Hajek theorem, see e.g.\ \cite[Corollary 6.4.11]{bogachev_gaussian_1998} or \cite[Theorem 2.25]{da_prato_stochastic_2014}.

\begin{theorem}[Feldman--Hajek]
	\label{thm:feldman--hajek}
	Let $\H$ be a Hilbert space and $\mu=\mathcal{N}(m_1,\mathcal{C}_1)$ and $\nu=\mathcal{N}(m_2,\mathcal{C}_2)$ be Gaussian measures on $\H$.
	Then $\mu$ and $\nu$ are singular or equivalent, and $\mu$ and $\nu$ are equivalent if and only if the following conditions hold:
	\begin{enumerate}
		\item 
			\label{item:fh_ranges}
			$\ran{\mathcal{C}_1^{1/2}} = \ran{\mathcal{C}_2^{1/2}}$,
		\item 
			\label{item:fh_means}
			$m_1-m_2\in\ran{\mathcal{C}_1^{1/2}}$ and,
		\item 
			\label{item:fh_covariance}
			$(\mathcal{C}_1^{-1/2}\mathcal{C}_2^{1/2})(\mathcal{C}_1^{-1/2}\mathcal{C}_2^{1/2})^*-I\in L_2(\H)$.
	\end{enumerate}
\end{theorem}

The operator appearing in \Cref{thm:feldman--hajek}\ref{item:fh_covariance} quantifies the amount of similarity between Gaussian measures. If it does not have square-summable eigenvalues, then the Gaussian measures are mutually singular. In the other extreme, if the Gaussian measures are equal, then this operator is equal to 0 and the squared eigenvalues sum to 0.

\begin{remark}[Cameron--Martin norm equivalence]
	\Cref{thm:feldman--hajek} states that the Cameron--Martin spaces $\ran{\mathcal{C}_i^{1/2}}$, $i=1,2$, of the Gaussian measures $\mu$ and $\nu$ are equal as subspaces if $\mu$ and $\nu$ are equivalent, see also \cite[Proposition 2.7.3]{bogachev_gaussian_1998}. In fact, the two Cameron--Martin spaces $(\ran{\mathcal{C}_i^{1/2}},\norm{\cdot}_{\mathcal{C}_i^{-1}})$, $i=1,2$, must then have equivalent Cameron--Martin norms as well. This follows from \Cref{lemma:equivalent_norms} applied to the square root of the two covariances. This fact is mentioned without proof in \cite[Proposition B.2]{Pinski2015} and \cite[Proposition B.1]{Bolin2023}.
\end{remark}

Let us define
\begin{align}
	\label{eqn:equivalent_covariances}
	\mathcal{E}\coloneqq \{\mathcal{C}\in L_1(\mathcal{H})_\R:\ \mathcal{N}(m_\pos,\mathcal{C})\sim\mu_\pos\},
\end{align}
and more generally, for $m_1\in\H$ and $\mathcal{C}_1\in L_1(\H)_\R$ with $\mathcal{C}_1>0$,
\begin{align}
	\label{eqn:equivalent_covariances_general}
	\mathcal{E}(m_1,\mathcal{C}_1)\coloneqq \{\mathcal{C}\in L_1(\mathcal{H})_\R:\ \mathcal{N}(m_1,\mathcal{C})\sim\mathcal{N}(m_1,\mathcal{C}_1)\}.
\end{align}
That is, $\mathcal{E}$ contains those covariances $\mathcal{C}$ such that $\mathcal{N}(m_\pos,\mathcal{C})$ is equivalent to $\mu_\pos$ and $\mathcal{E}=\mathcal{E}(m_\pos,\mathcal{C}_\pos)$.
Since $\mu_\pos$ and $\mu_\pr$ are equivalent, we have $\mathcal{C}_\pr\in\mathcal{E}$.

In order to characterise the set $\mathcal{E}$ in \eqref{eqn:equivalent_covariances}, we introduce the following definition, which is closely related to \cref{item:fh_covariance} of \Cref{thm:feldman--hajek}. This definition appears in \cite[Section 6.3]{bogachev_gaussian_1998}.

\begin{definition}
	\label{def:property_E}
	If $A\in\mathcal{B}(\H)$ is invertible and $AA^*-I\in L_2(\mathcal{H})$, then we say that $A$ satisfies `property E'.
\end{definition}

By \cite[Lemma 6.3.1(ii)]{bogachev_gaussian_1998} the set of operators that satisfy property E is closed under taking inverses, adjoints and compositions.
Furthermore, since $\mathcal{\mu}_\pos\sim\mu_\pr$, $\mathcal{C}_\pr^{-1/2}\mathcal{C}_\pos^{1/2}$ satisfies property E.
One can now use \Cref{thm:feldman--hajek} to describe the set $\mathcal{E}$ in \eqref{eqn:equivalent_covariances} explicitly, see \Cref{lemma:equivalent_cm_condition}:
\begin{align}
	\label{eqn:characterisation_of_equivalent_covariances}
	\begin{split}
		\mathcal{E} &= \left\{ \mathcal{C}\in L_1(\mathcal{H})_\R:\ \mathcal{C}>0,\ \mathcal{C}^{-1/2}\mathcal{C}_{\pos}^{1/2} \text{ satisfies property E} \right\} \\
		&= \left\{ \mathcal{C}\in L_1(\mathcal{H})_\R:\ \mathcal{C}>0,\ \mathcal{C}^{-1/2}\mathcal{C}_{\pr}^{1/2}\text{ satisfies property E} \right\} .
	\end{split}
\end{align}
For $\mathcal{C}_1,\mathcal{C}_2\in\mathcal{E}$, we now define
\begin{align}
	\label{eqn:feldman_hajek_operator}
	R(\mathcal{C}_2\Vert \mathcal{C}_1) \coloneqq \mathcal{C}_1^{-1/2}\mathcal{C}_2^{1/2}(\mathcal{C}_1^{-1/2}\mathcal{C}_2^{1/2})^* - I.
\end{align}
By \Cref{thm:feldman--hajek}\ref{item:fh_covariance}, $R(\mathcal{C}_2\Vert \mathcal{C}_1)\in L_2(\mathcal{H})$. Since $R(\mathcal{C}_2\Vert \mathcal{C}_1)$ is a self-adjoint compact operator, there exists an ONB of $\mathcal{H}$ that diagonalises $R(\mathcal{C}_2\Vert \mathcal{C}_1)$, see \Cref{lemma:operator_svd}. We note that $R(\cdot\Vert\cdot)$ is in general not symmetric in its arguments. The result below will be used frequently in our analysis of low-rank approximations of the posterior covariance operator.

\begin{restatable}{lemma}{expansionsFeldmanHajek}
	\label{lemma:expansions_feldman_hajek}
	Let $\mathcal{C}_1,\mathcal{C}_2$ be injective covariances of equivalent Gaussian measures.
	Then there exists a sequence $(\lambda_i)_i\in\ell^2( (-1,\infty))$ and ONBs $(w_i)_i$ and $(v_i)_i$ of $\H$ such that $v_i = \sqrt{1+\lambda_i}\mathcal{C}_2^{-1/2}\mathcal{C}_1^{1/2}w_i$ and the following statements hold:
	\begin{enumerate}
		\item
			\label{item:covariance_mix_1}
			$\mathcal{C}_1^{-1/2}\mathcal{C}_2\mathcal{C}_1^{-1/2} - I\subset (\mathcal{C}_1^{-1/2}\mathcal{C}_2^{1/2})(\mathcal{C}_1^{-1/2}\mathcal{C}_2^{1/2})^*-I=\sum_{i=1}^{} \lambda_i w_i\otimes w_i\in L_2(\H)$,
		\item
			\label{item:covariance_mix_2}
			$\mathcal{C}_2^{1/2}\mathcal{C}_1^{-1}\mathcal{C}_2^{1/2} - I \subset (\mathcal{C}_1^{-1/2}\mathcal{C}_2^{1/2})^*(\mathcal{C}_1^{-1/2}\mathcal{C}_2^{1/2})-I = \sum_{i}^{}\lambda_i v_i\otimes v_i\in L_2(\H)$,
		\item
			\label{item:covariance_mix_3}
			$\mathcal{C}_2^{-1/2}\mathcal{C}_1^{}\mathcal{C}_2^{-1/2} - I \subset (\mathcal{C}_2^{-1/2}\mathcal{C}_1^{1/2})(\mathcal{C}_2^{-1/2}\mathcal{C}_1^{1/2})^* - I =\sum_{i}^{}\frac{-\lambda_i}{1+\lambda_i}v_i \otimes v_i\in L_2(\H)$,
		\item
			\label{item:covariance_mix_4}
			$\mathcal{C}_1^{1/2}\mathcal{C}_2^{-1}\mathcal{C}_1^{1/2}-I \subset (\mathcal{C}_2^{-1/2}\mathcal{C}_1^{1/2})^*(\mathcal{C}_2^{-1/2}\mathcal{C}_1^{1/2})-I=\sum_{i}^{}\frac{-\lambda_i}{1+\lambda_i}w_i\otimes w_i\in L_2(\H)$,
	\end{enumerate}
	where the domains of the leftmost operators in each statement are dense and in \cref{item:covariance_mix_1,item:covariance_mix_3} contain $\ran{\mathcal{C}_1^{1/2}=\ran{\mathcal{C}_2^{1/2}}}$. 
\end{restatable}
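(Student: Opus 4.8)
The plan is to set $A\coloneqq \mathcal{C}_1^{-1/2}\mathcal{C}_2^{1/2}$ and build everything from its singular value decomposition. Since $\mathcal{C}_1,\mathcal{C}_2$ are covariances of equivalent Gaussians, \Cref{thm:feldman--hajek}\ref{item:fh_ranges} gives $\ran{\mathcal{C}_1^{1/2}}=\ran{\mathcal{C}_2^{1/2}}$, so $A$ is a densely defined operator whose closure satisfies property E; in particular $AA^*-I\in L_2(\H)$ by \Cref{thm:feldman--hajek}\ref{item:fh_covariance}. The operator $AA^*-I$ is self-adjoint, compact and Hilbert--Schmidt, so by the spectral theorem (\Cref{lemma:operator_svd}) there is an ONB $(w_i)_i$ of $\H$ with $(AA^*-I)w_i=\lambda_i w_i$ and $(\lambda_i)_i\in\ell^2$. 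Because $AA^*\geq 0$ we get $1+\lambda_i\geq 0$, and injectivity of $\mathcal{C}_1,\mathcal{C}_2$ (hence of $A$ and $A^*$) forces $1+\lambda_i>0$, so in fact $(\lambda_i)_i\in\ell^2((-1,\infty))$, giving the stated eigenvalue constraint. This establishes the displayed expansion in item \ref{item:covariance_mix_1} for the bounded part $AA^*-I=\sum_i\lambda_i\, w_i\otimes w_i$.

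Next I would produce the second ONB and the four expansions by the standard polar/SVD bookkeeping. Define $v_i\coloneqq \sqrt{1+\lambda_i}\,A^* w_i$; I would check $(v_i)_i$ is an ONB by computing $\langle v_i,v_j\rangle=\sqrt{(1+\lambda_i)(1+\lambda_j)}\,\langle A^*w_i,A^*w_j\rangle=\sqrt{(1+\lambda_i)(1+\lambda_j)}\,\langle AA^*w_i,w_j\rangle=\sqrt{(1+\lambda_i)(1+\lambda_j)}\,(1+\lambda_i)^{-1/2}(1+\lambda_j)^{1/2}\delta_{ij}$, which reduces to $\delta_{ij}$; completeness follows from injectivity of $A^*$ (equivalently surjectivity of the range identity) together with $(w_i)_i$ being a basis. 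Note $v_i=\sqrt{1+\lambda_i}\,A^*w_i=\sqrt{1+\lambda_i}\,\mathcal{C}_2^{1/2}\mathcal{C}_1^{-1/2}w_i$; since $(\mathcal{C}_1^{-1/2}\mathcal{C}_2^{1/2})^*\supset \mathcal{C}_2^{1/2}\mathcal{C}_1^{-1/2}$, this matches the claimed relation $v_i=\sqrt{1+\lambda_i}\,\mathcal{C}_2^{-1/2}\mathcal{C}_1^{1/2}w_i$ after inverting, and I would verify the inversion on the dense domain. A direct computation then gives $A^*A\,v_i=(1+\lambda_i)v_i$, so $(A^*A-I)=\sum_i\lambda_i\,v_i\otimes v_i$, which is item \ref{item:covariance_mix_2}. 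For items \ref{item:covariance_mix_3} and \ref{item:covariance_mix_4} I would apply the same reasoning to $A^{-1}=\mathcal{C}_2^{-1/2}\mathcal{C}_1^{1/2}$, whose singular system is the reciprocal one: from $AA^*w_i=(1+\lambda_i)w_i$ one gets $(A^{-1})^*A^{-1}w_i=(1+\lambda_i)^{-1}w_i$ and $A^{-1}(A^{-1})^*v_i=(1+\lambda_i)^{-1}v_i$, whence the eigenvalues of $(A^{-1})(A^{-1})^*-I$ and $(A^{-1})^*(A^{-1})-I$ are $(1+\lambda_i)^{-1}-1=-\lambda_i/(1+\lambda_i)$, matching the two claimed expansions; property E for $A^{-1}$ (inherited under inverses) guarantees these too lie in $L_2(\H)$.

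The remaining, and genuinely delicate, point is the domain bookkeeping encoded by the $\subset$ symbols: the leftmost operators in each item, e.g. $\mathcal{C}_1^{-1/2}\mathcal{C}_2\mathcal{C}_1^{-1/2}-I$, are only defined on the dense subspace $\ran{\mathcal{C}_1^{1/2}}=\ran{\mathcal{C}_2^{1/2}}$ and are generally unbounded when $\dim\H=\infty$, whereas the middle operators $AA^*-I$ etc.\ are their bounded extensions. The plan here is to verify, on the dense domain, the algebraic identity $\mathcal{C}_1^{-1/2}\mathcal{C}_2\mathcal{C}_1^{-1/2}=(\mathcal{C}_1^{-1/2}\mathcal{C}_2^{1/2})(\mathcal{C}_2^{1/2}\mathcal{C}_1^{-1/2})\subset A A^*$, using that on $\ran{\mathcal{C}_1^{1/2}}$ the symmetric operator $\mathcal{C}_2^{1/2}\mathcal{C}_1^{-1/2}$ agrees with the restriction of $A^*$, and that composition of unbounded operators only enlarges the graph. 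I expect this extension argument to be the main obstacle, since it requires care about where each factor is defined, using \Cref{lemma:symmetric_operators} and the notion of extension from \Cref{sec:notation}; once the four inclusions $\subset$ are justified, the boundedness, Hilbert--Schmidt membership and explicit eigen-expansions all follow from the SVD constructed above.
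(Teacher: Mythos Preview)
Your overall strategy is exactly the paper's: set $A=\mathcal{C}_1^{-1/2}\mathcal{C}_2^{1/2}$, diagonalise $AA^*-I$, and read off the other three expansions by algebra with $A^*$, $A^{-1}$ and $(A^{-1})^*$; the extension statements $\subset$ are handled via \Cref{lemma:adjoint_of_densely_defined_operators}. So the plan is sound.

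However, your definition of $v_i$ is wrong, and the error propagates. The statement asks for $v_i=\sqrt{1+\lambda_i}\,\mathcal{C}_2^{-1/2}\mathcal{C}_1^{1/2}w_i=\sqrt{1+\lambda_i}\,A^{-1}w_i$, whereas you set $v_i=\sqrt{1+\lambda_i}\,A^*w_i$. These are not the same operator: $A^*\supset\mathcal{C}_2^{1/2}\mathcal{C}_1^{-1/2}$, while $A^{-1}=\mathcal{C}_2^{-1/2}\mathcal{C}_1^{1/2}$; your remark ``this matches the claimed relation \dots\ after inverting'' does not repair this, because $A^*\neq A^{-1}$ in general. From $AA^*w_i=(1+\lambda_i)w_i$ one gets $A^*w_i=(1+\lambda_i)A^{-1}w_i$, so your $v_i$ has norm $1+\lambda_i$, not $1$. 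Indeed your orthonormality computation is arithmetically off: $\langle AA^*w_i,w_j\rangle=(1+\lambda_i)\delta_{ij}$, so with your normalisation $\langle v_i,v_j\rangle=(1+\lambda_i)^2\delta_{ij}$, not $\delta_{ij}$. The correct choice is either $v_i=(1+\lambda_i)^{1/2}A^{-1}w_i$ (the paper's) or the equivalent $v_i=(1+\lambda_i)^{-1/2}A^*w_i$.

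Once this is fixed, the rest of your outline goes through. The paper obtains items \ref{item:covariance_mix_2}--\ref{item:covariance_mix_4} by left-multiplying $AA^*w_i=(1+\lambda_i)w_i$ by $A^{-1}$, by $A^{-1}(A^{-1})^*A^{-1}$, and by $(A^{-1})^*A^{-1}$ respectively, which is essentially your ``reciprocal singular system'' argument; completeness of $(v_i)_i$ is checked directly from continuity of $A^{-1}$ and completeness of $(w_i)_i$, which is cleaner than invoking injectivity of $A^*$.
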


If $\mathcal{C}_1$ and $\mathcal{C}_2$ are as given in \Cref{lemma:expansions_feldman_hajek}, then the operator $\mathcal{C}_2^{-1/2}\mathcal{C}_1^{1/2}$ is invertible, by \Cref{thm:feldman--hajek} and \Cref{lemma:equivalent_cm_condition}. Furthermore, the map $\lambda\mapsto\frac{-\lambda}{1+\lambda}$ is a bijection on $(-1,\infty)$. Thus, each of the pairs $(\lambda_i,w_i)$, $(\lambda_i,v_i)$, $(\frac{-\lambda_i}{1+\lambda_i},v_i)$ and $(\frac{-\lambda_i}{1+\lambda_i},w_i)$ determines the other three. Hence, \Cref{lemma:expansions_feldman_hajek} shows that the operator in \Cref{thm:feldman--hajek}\ref{item:fh_covariance} can equivalently be described by the operators in \cref{item:covariance_mix_2,item:covariance_mix_3,item:covariance_mix_4}, which thus all contain the same information.
The operators in \Cref{lemma:expansions_feldman_hajek} can be seen as generalisations of the notion of an operator pencil, which we formally define below.

\begin{definition}
	For possibly unbounded operators $T,S:\mathcal{H}\rightarrow\mathcal{H}$, the operator pencil $(T,S)$ is defined by the collection of operators $\{T-\lambda S,\ \lambda\in\R\}$. A `generalised eigenvalue' of $(T,S)$ is a value $\lambda\in\R$ for which $T-\lambda S$ is not injective. For such $\lambda$ there exists a nonzero $v\in\dom{T}\cap\dom{S}$ such that $Tv=\lambda Sv$, which is called a `generalised eigenvector', and we say that $(\lambda,v)$ is a `generalised eigenpair' of $(T,S)$.
	\label{def:operator_pencils}
\end{definition}

\begin{remark}[Generalised eigenpairs]
	If $w_i\in\dom{\mathcal{C}_2^{1/2}\mathcal{C}_1^{-1/2}}=\ran{\mathcal{C}_1^{1/2}}$ for some $i$, then the statement of \cref{item:covariance_mix_1} implies $\mathcal{C}_2 \mathcal{C}_1^{-1/2}w_i = (1+\lambda_i)\mathcal{C}_1^{1/2}w_i$. In other words, $\mathcal{C}_2(\mathcal{C}_1^{-1/2}w_i)=(1+\lambda_i)\mathcal{C}_1(\mathcal{C}_1^{-1/2}w_i)$, showing that $(1+\lambda_i,\mathcal{C}_1^{-1/2}w_i)$ is a generalised eigenpair of the generalised operator pencil $(\mathcal{C}_2,\mathcal{C}_1)$. Furthermore, if $(v_i)_i$ lies in the dense subspace $\dom{\mathcal{C}_2^{1/2}\mathcal{C}_1^{-1}\mathcal{C}_2^{1/2}} = \dom{\mathcal{C}_1^{-1}\mathcal{C}_2^{1/2}}$, then for any $i$ we have $\mathcal{C}_2^{1/2}v_i\in\dom{\mathcal{C}_1^{-1}}$. The relation in \cref{item:covariance_mix_2} shows that $\mathcal{C}_2^{1/2}\mathcal{C}_1^{-1}\mathcal{C}_2^{1/2}v_i=(1+\lambda_i)v_i$, so that $v_i\in\ran{\mathcal{C}_2^{1/2}}$. 
	Hence, $\mathcal{C}_2^{1/2}v_i\in\dom{\mathcal{C}_1^{-1}}\cap\dom{\mathcal{C}_2^{-1}}$. The previous relation implies $\mathcal{C}_1^{-1}\mathcal{C}_2^{1/2}v_i = (1+\lambda_i)\mathcal{C}_2^{-1}\mathcal{C}_2^{1/2}v_i$, showing that $( 1+\lambda_i,\mathcal{C}_2^{1/2}v_i)=(1+\lambda_i,\sqrt{1+\lambda_i}\mathcal{C}_1^{1/2}w_i)$ is a generalised eigenpair of $(\mathcal{C}_1^{-1},\mathcal{C}_2^{-1})$. Thus, in the case $(w_i)_i$ and $(v_i)_i$ lie in a dense set of $\H$, \cref{item:covariance_mix_1,item:covariance_mix_2,item:covariance_mix_3,item:covariance_mix_4} in \Cref{lemma:expansions_feldman_hajek} can be interpreted as statements on operator pencils. The statements in \Cref{lemma:expansions_feldman_hajek} do not assume that $(w_i)_i$ and $(v_i)_i$ are contained in the particular dense subspaces of $\H$ on which the leftmost operators are defined. Therefore, these statements generalise the interpretation of a generalised eigenpencil given above.
	\label{rmk:operator_pencils}
\end{remark}

\Cref{thm:feldman--hajek} and \Cref{lemma:expansions_feldman_hajek} hold for any equivalent Gaussian measures. In the specific case of the linear Bayesian inverse problem \eqref{eqn:observation_model}, in which case the posterior precision is a finite-rank update $H$ of the prior by \eqref{eqn:pos_precision}, more can be said about the eigenvectors and eigenvalues given by \Cref{lemma:expansions_feldman_hajek} of the operators $R(\mathcal{C}_\pr\Vert \mathcal{C}_\pos)$ and $R(\mathcal{C}_\pos\Vert\mathcal{C}_\pr)$. We remind the reader of the definition of the Hessian ${H}$ in \eqref{eqn:hessian}.

\begin{restatable}{proposition}{bayesianFeldmanHajek}
	\label{prop:bayesian_feldman_hajek}
	There exists a nondecreasing sequence $(\lambda_i)_i\in\ell^2( (-1,0] )$ consisting of exactly $\rank{H}$ nonzero elements and ONBs $(w_i)_i$ and $(v_i)_i$ of $\mathcal{H}$ such that $w_i,v_i\in\ran{\mathcal{C}_\pr^{1/2}}$ and $v_i = \sqrt{1+\lambda_i}\mathcal{C}_\pos^{-1/2}\mathcal{C}_\pr^{1/2}w_i$ for every $i\in \N$, and
	\begin{subequations}
		\begin{align}
			R(\mathcal{C}_\pos\Vert \mathcal{C}_\pr) &= \sum_{i}^{}\lambda_i w_i\otimes w_i, \nonumber \\
			\label{eqn:prior_preconditioned_Hessian}
			\mathcal{C}_{\pr}^{1/2}H\mathcal{C}_{\pr}^{1/2}
			&= (\mathcal{C}_\pos^{-1/2}\mathcal{C}_\pr^{1/2})^* (\mathcal{C}_\pos^{-1/2}\mathcal{C}_\pr^{1/2}) - I
			=\sum_{i}\frac{-\lambda_i}{1+\lambda_i}w_i\otimes w_i, \\
			\label{eqn:posterior_preconditioned_Hessian}
			\mathcal{C}_{\pos}^{1/2}H\mathcal{C}_{\pos}^{1/2}	
			&= I-(\mathcal{C}_\pr^{-1/2}\mathcal{C}_\pos^{1/2})^* (\mathcal{C}_\pr^{-1/2}\mathcal{C}_\pos^{1/2})
			=\sum_{i}(-\lambda_i) v_i\otimes v_i,\\
			\label{eqn:bayesian_cov_pencil}
			\mathcal{C}_\pos^{1/2}\mathcal{C}_\pr^{-1/2}w_i 
			&= (1+\lambda_i)\mathcal{C}_\pos^{-1/2}\mathcal{C}_\pr^{1/2}w_i,\quad \forall i\in\N.
		\end{align}
	\end{subequations}
\end{restatable}

In \Cref{prop:bayesian_feldman_hajek}, $w_i,v_i\in\ran{\mathcal{C}_\pr^{1/2}}$ for all $i$, so that $v_i\in\dom{\mathcal{C}_\pos^{1/2}\mathcal{C}_\pr^{-1}\mathcal{C}_\pos^{1/2}}$ and $w_i\in\dom{\mathcal{C}_\pr^{1/2}\mathcal{C}_\pos^{-1}\mathcal{C}_\pr^{1/2}}$,  because $\ran{\mathcal{C}_\pr}=\ran{\mathcal{C}_\pos}$. The equations \eqref{eqn:prior_preconditioned_Hessian} and \eqref{eqn:posterior_preconditioned_Hessian} can be interpreted as statements on operator pencils by \Cref{rmk:operator_pencils}.
More specifically, \eqref{eqn:prior_preconditioned_Hessian} states that $(\frac{-\lambda_i}{1+\lambda_i},\mathcal{C}_\pr^{1/2}w_i)$ is a generalised eigenpair of $(H,\mathcal{C}_\pr^{-1})$ and \eqref{eqn:posterior_preconditioned_Hessian} states that $(-\lambda_i,\mathcal{C}_\pos^{1/2}v_i)=(-\lambda_i,\sqrt{1+\lambda_i}\mathcal{C}_\pr^{-1/2}w_i)$ is a generalised eigenpair of $(H,\mathcal{C}_\pos^{-1})$, for any $i$. Furthermore, \eqref{eqn:bayesian_cov_pencil} can be interpreted as a statement on the operator pencils $(\mathcal{C}_\pos,\mathcal{C}_\pr)$ and $(\mathcal{C}_\pr^{-1},\mathcal{C}_\pos^{-1})$. The prior-preconditioned Hessian $\mathcal{C}_\pr^{1/2}H\mathcal{C}_\pr^{1/2}$ has been found to be the central object of study in the reduction of finite-dimensional linear Gaussian inverse problems, see \cite{Spantini2015,Cui2014}. We observe that this operator is directly related to $R(\mathcal{C}_\pos\Vert\mathcal{C}_\pr)$ via the equivalent characterisations given by \Cref{lemma:expansions_feldman_hajek} \cref{item:covariance_mix_1,item:covariance_mix_2,item:covariance_mix_3,item:covariance_mix_4} and hence to the function $R(\cdot\Vert\cdot)$ which quantifies the similarity of Gaussian measures by \Cref{thm:feldman--hajek}\ref{item:fh_covariance}.

\subsection{Divergences between equivalent Gaussian measures}
\label{subsec:divergences}
To measure the quality of an approximation $\tilde{\mathcal{C}}\in\mathcal{E}$ of $\mathcal{C}_\pos$ and $\tilde{m}_\pos$ of $m_\pos$, we shall use the R\'enyi divergences of order $\rho\in(0,1)$ and the forward and reverse Kullback--Leibler (KL) divergences. The KL divergence from a measure $\nu_1$ to a measure $\nu_2$ equivalent to $\nu_1$ is defined as
\begin{align*}
	D_{\kl}(\nu_2\Vert\nu_1) = \int_\mathcal{H}\log{\frac{\d \nu_2}{\d \nu_1}}\d\nu_2.
\end{align*}
If $\nu_2$ is a given measure that needs to be approximated and $\nu_1$ is an approximation of $\nu_2$, then we refer to $D_{\kl}(\nu_2 \Vert \nu_1)$ and to $D_{\kl}(\nu_1 \Vert \nu_2)$ as the `forward' and `reverse' KL divergence of the approximation respectively. The R\'enyi divergence of order $\rho\in(0,1)$ is defined by
\begin{align*}
	D_{\ren,\rho}(\nu_2\Vert\nu_1) = -\frac{1}{\rho(1-\rho)}\log{\int_\mathcal{H}\left(\frac{\d\nu_2}{\d\nu_1}\right)^\rho\d\nu_1},
\end{align*}
c.f.\ \cite[eq. (130)]{minh_regularized_2021}.
It holds that $D_{\ren,\rho}(\nu_1\Vert \nu_2) = D_{\ren,1-\rho}(\nu_2\Vert \nu_1)$, because
\begin{align*}
	\int_{\mathcal{H}}^{}\left( \frac{\d\nu_1}{\d\nu_2}\right)^\rho\d\nu_2 = \int_{\mathcal{H}}^{}\left(\frac{\d\nu_2}{\d\nu_1}\right)^{1-\rho} \left( \frac{\d\nu_1}{\d\nu_2} \right)^{1-\rho}\left( \frac{\d\nu_1}{\d\nu_2} \right)^\rho\d\nu_2 = \int_{\mathcal{H}}^{}\left(\frac{\d\nu_2}{\d\nu_1}\right)^{1-\rho} \d\nu_1.
\end{align*}
This is known as the `skew symmetry' of the R\'enyi divergence, c.f.\ \cite[Proposition 2]{vanErven2014}. Consequently, there is no need to consider forward R\'enyi divergences $D_{\ren,\rho}(\nu_2\Vert\nu_1)$ and reverse R\'enyi divergences $D_{\ren,\rho}(\nu_1\Vert\nu_2)$ separately.

In the Gaussian case, an explicit representation of these divergences holds, as shown in \cite{minh_regularized_2021}. For this, we need a generalisation of the determinant to infinite-dimensional Hilbert spaces. Because in infinite dimensions the eigenvalues of a compact operator accumulate at 0, direct extension of the finite-dimensional definition of the determinant as the product of the eigenvalues to infinite dimensions will result in the determinant function being equal to the constant $0$. A generalisation of the concept of the determinant for trace-class and Hilbert--Schmidt operators is given by the Fredholm determinant and Hilbert--Carleman determinant respectively. These are defined on respectively trace-class and Hilbert--Schmidt perturbations of the identity, and are indicated by $\det(I+A)$, $A\in L_1(\mathcal{H})$, and respectively $\det_2{(I+A)}$, $A\in L_2(\mathcal{H})$. We refer to \cite[Theorem 3.2, Theorem 6.2]{simon_notes_1977} or \cite[Lemma 3.3, Theorem 9.2]{Simon2005}. For $A\in L_1(\mathcal{H})$, we have $\det_2{(I+A)}=\det(I+A)\exp(-\tr{A})$, and for $A\in L_2(\mathcal{H})$ the determinants are related via $\det_2( I+A ) = \det( I + (I+A)\exp(-A) ).$
By \cite[Theorem 4.2, Theorem 6.2]{simon_notes_1977} or \cite[Theorem 3.7, Theorem 9.2]{Simon2005} for each $\mu\in\R$, we have the expression
\begin{align}
	\label{eqn:determinant_for_L_1}
	\det( 1+\mu A ) &= \prod_i(1+\mu \lambda_i),\quad A\in L_1(\mathcal{H})\\
	\label{eqn:determinant_for_L_2}
	\det_2( I+\mu A ) &= \prod_{i}(1+\mu\lambda_i)\exp{(-\mu\lambda_i)}, \quad A\in L_2(\mathcal{H}),
\end{align}
where $(\lambda_i)_i$ denotes the eigenvalue sequence of $A$.
In the case that $\dim{\mathcal{H}}<\infty$ we note that $A-I\in L_1(\mathcal{H})$ and $\det(A)=\det(I+(A-I)) = \prod_i\lambda_i$ and thus $\det(\cdot)$ indeed extends the finite-dimensional definition of the determinant. We can now formulate the explicit expression of the KL and R\'enyi divergences for equivalent Gaussian measures.
The result below holds when $\H$ is a separable Hilbert space of finite or infinite dimension.

\begin{restatable}{theorem}{gaussianDivergences}
	\label{thm:gaussian_divergences}
	Let $m_1,m_2\in\mathcal{H}$ and $\mathcal{C}_1,\mathcal{C}_2\in L_2(\mathcal{H})_\R$ be positive. If $m_1-m_2\in\ran{\mathcal{C}_1^{1/2}}$ and if $\mathcal{C}_1^{-1/2}\mathcal{C}_2^{1/2}$ satisfies property E, then
	\begin{subequations}
		\label{eqn:divergences}
		\begin{align}
			\label{eqn:kullback_leibler_divergence}
			D_{\kl}(\mathcal{N}(m_2,\mathcal{C}_2)\Vert \mathcal{N}(m_1,\mathcal{C}_1)) &\coloneqq \frac{1}{2}\Norm{\mathcal{C}_1^{-1/2}(m_2-m_1)}^2-\frac{1}{2}\log\det_2(I+R(\mathcal{C}_2\Vert\mathcal{C}_1)),\\
			\label{eqn:renyi_divergence_order_rho}
			\begin{split}
				D_{\ren,\rho}(\mathcal{N}(m_2,\mathcal{C}_2)\Vert \mathcal{N}(m_1,\mathcal{C}_1)) &\coloneqq \frac{1}{2}\Norm{\bigr(\rho I+(1-\rho)(I+R(\mathcal{C}_2\Vert\mathcal{C}_1))\bigr)^{-1/2}\mathcal{C}_1^{-1/2}(m_2-m_1)}^2\\
				&+\frac{\log\det\left[\bigl(I+R(\mathcal{C}_2\Vert\mathcal{C}_1)\bigr)^{\rho-1}\bigl(\rho I+(1-\rho)(I+R(\mathcal{C}_2\Vert\mathcal{C}_1))\bigr)\right]}{2\rho(1-\rho)}.
			\end{split}
		\end{align}
	\end{subequations}
	Furthermore, 
	\begin{align*}
		\lim_{\rho\rightarrow 1} D_{\ren,\rho}(\mathcal{N}(m_2,\mathcal{C}_2)\Vert \mathcal{N}(m_1,\mathcal{C}_1))= D_{\kl}(\mathcal{N}(m_2,\mathcal{C}_2)\Vert \mathcal{N}(m_1,\mathcal{C}_1)),\\
		\lim_{\rho\rightarrow 0} D_{\ren,\rho}(\mathcal{N}(m_2,\mathcal{C}_2)\Vert \mathcal{N}(m_1,\mathcal{C}_1))= D_{\kl}(\mathcal{N}(m_1,\mathcal{C}_1)\Vert \mathcal{N}(m_2,\mathcal{C}_2)).
	\end{align*}
\end{restatable}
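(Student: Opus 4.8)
The plan is to simultaneously diagonalise the pair $\nu_1\coloneqq\mathcal{N}(m_1,\mathcal{C}_1)$ and $\nu_2\coloneqq\mathcal{N}(m_2,\mathcal{C}_2)$, reduce both measures to a countable product of one-dimensional Gaussians, and then add up explicit scalar divergences. First I would record that the hypotheses are precisely the conditions of the Feldman--Hajek theorem (\Cref{thm:feldman--hajek}): property E of $\mathcal{C}_1^{-1/2}\mathcal{C}_2^{1/2}$ (\Cref{def:property_E}) supplies \Cref{thm:feldman--hajek}\ref{item:fh_ranges} and \ref{item:fh_covariance}, while $m_2-m_1\in\ran{\mathcal{C}_1^{1/2}}$ supplies the remaining mean condition. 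Hence $\nu_1\sim\nu_2$, the Radon--Nikodym derivative exists, and the divergences are well defined. Applying \Cref{lemma:expansions_feldman_hajek} gives an ONB $(w_i)_i$ with $R(\mathcal{C}_2\Vert\mathcal{C}_1)=\sum_i\lambda_i\, w_i\otimes w_i$ and $(\lambda_i)_i\in\ell^2((-1,\infty))$. Here I would also check that the two determinant expressions make sense: $R(\mathcal{C}_2\Vert\mathcal{C}_1)\in L_2(\mathcal{H})$ legitimises $\det_2$ in \eqref{eqn:kullback_leibler_divergence} via \eqref{eqn:determinant_for_L_2}, and a Taylor estimate showing that the $i$-th eigenvalue of $(I+R(\mathcal{C}_2\Vert\mathcal{C}_1))^{\rho-1}(\rho I+(1-\rho)(I+R(\mathcal{C}_2\Vert\mathcal{C}_1)))-I$ is $O(\lambda_i^2)$ shows this operator lies in $I+L_1(\mathcal{H})$, so the Fredholm determinant in \eqref{eqn:renyi_divergence_order_rho} converges via \eqref{eqn:determinant_for_L_1}; I would also note that $\rho I+(1-\rho)(I+R(\mathcal{C}_2\Vert\mathcal{C}_1))$ has spectrum bounded away from $0$, since $\lambda_i>-1$ and $\lambda_i\to 0$.

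Next I would introduce whitening coordinates. For each $i$ let $\xi_i$ be the $\nu_1$-measurable linear functional extending $x\mapsto\langle\mathcal{C}_1^{-1/2}(x-m_1),w_i\rangle$ (a Paley--Wiener functional, since $\mathcal{C}_1^{-1/2}w_i$ need not lie in $\mathcal{H}$). Under $\nu_1$ the $\xi_i$ are i.i.d. $\mathcal{N}(0,1)$ because $\mathrm{Cov}_{\nu_1}(\xi_i,\xi_j)=\langle w_i,w_j\rangle=\delta_{ij}$; under $\nu_2$ they are independent with mean $\langle\mathcal{C}_1^{-1/2}(m_2-m_1),w_i\rangle\eqqcolon h_i$ and, by \Cref{lemma:expansions_feldman_hajek}\ref{item:covariance_mix_1}, $\mathrm{Cov}_{\nu_2}(\xi_i,\xi_j)=\langle(I+R(\mathcal{C}_2\Vert\mathcal{C}_1))w_i,w_j\rangle=(1+\lambda_i)\delta_{ij}$. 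Since $(w_i)_i$ is an ONB, the map $x\mapsto(\xi_i(x))_i$ is an isomorphism of measure spaces modulo null sets, so it pushes $\nu_1$ and $\nu_2$ forward to $\bigotimes_i\mathcal{N}(0,1)$ and $\bigotimes_i\mathcal{N}(h_i,1+\lambda_i)$ while preserving both divergences exactly.

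Both divergences are additive over independent coordinates --- for the KL divergence by the chain rule and monotone convergence over finite sub-products, and for the R\'enyi divergence because the defining integral factorises by Tonelli into $\prod_i\int(\d\nu_2^{(i)}/\d\nu_1^{(i)})^\rho\,\d\nu_1^{(i)}$. I would then evaluate the scalar Gaussian divergences: the KL contribution of mode $i$ is $\tfrac12 h_i^2+\tfrac12(\lambda_i-\log(1+\lambda_i))$ and the R\'enyi contribution is $\tfrac{h_i^2}{2(\rho+(1-\rho)(1+\lambda_i))}+\tfrac{1}{2\rho(1-\rho)}\log\bigl[(1+\lambda_i)^{\rho-1}(\rho+(1-\rho)(1+\lambda_i))\bigr]$. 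Summing over $i$, the mean parts give $\tfrac12\Norm{\mathcal{C}_1^{-1/2}(m_2-m_1)}^2$ and, in the R\'enyi case, $\tfrac12\Norm{(\rho I+(1-\rho)(I+R(\mathcal{C}_2\Vert\mathcal{C}_1)))^{-1/2}\mathcal{C}_1^{-1/2}(m_2-m_1)}^2$ (the operator being diagonal in $(w_i)_i$), while the covariance parts are recognised through \eqref{eqn:determinant_for_L_2} and \eqref{eqn:determinant_for_L_1} as $-\tfrac12\log\det_2(I+R(\mathcal{C}_2\Vert\mathcal{C}_1))$ and the Fredholm-determinant term of \eqref{eqn:renyi_divergence_order_rho}. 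Convergence of every series follows from $\sum_i h_i^2=\Norm{\mathcal{C}_1^{-1/2}(m_2-m_1)}^2<\infty$ and the $O(\lambda_i^2)$ estimate with $\sum_i\lambda_i^2<\infty$, establishing \eqref{eqn:kullback_leibler_divergence} and \eqref{eqn:renyi_divergence_order_rho}.

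For the two limits I would argue at the level of the eigenvalue series using dominated convergence, the summable majorant again coming from the $O(\lambda_i^2)$ bound uniformly for $\rho$ near the endpoint. As $\rho\to 1$, $(\rho I+(1-\rho)(I+R(\mathcal{C}_2\Vert\mathcal{C}_1)))^{-1/2}\to I$ and each determinant term tends to $\tfrac12(\lambda_i-\log(1+\lambda_i))$, recovering $D_{\kl}(\nu_2\Vert\nu_1)$. As $\rho\to 0$, the mean operator tends to $(I+R(\mathcal{C}_2\Vert\mathcal{C}_1))^{-1/2}$, and using $I+R(\mathcal{C}_2\Vert\mathcal{C}_1)=(\mathcal{C}_1^{-1/2}\mathcal{C}_2^{1/2})(\mathcal{C}_1^{-1/2}\mathcal{C}_2^{1/2})^*$ with $(\mathcal{C}_1^{-1/2}\mathcal{C}_2^{1/2})^{-1}=\mathcal{C}_2^{-1/2}\mathcal{C}_1^{1/2}$ the mean part becomes $\tfrac12\Norm{\mathcal{C}_2^{-1/2}(m_1-m_2)}^2$; after the substitution $1+\lambda_i\mapsto(1+\lambda_i)^{-1}$ supplied by the eigenvalue relation in \Cref{lemma:expansions_feldman_hajek}\ref{item:covariance_mix_3}--\ref{item:covariance_mix_4} the determinant term tends to $-\tfrac12\log\det_2(I+R(\mathcal{C}_1\Vert\mathcal{C}_2))$, so together they give $D_{\kl}(\nu_1\Vert\nu_2)$. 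The main obstacle I anticipate is not any single inequality but the rigorous infinite-dimensional bookkeeping: constructing the $\xi_i$ as measurable linear functionals and justifying their law under $\nu_2$ (not only under $\nu_1$), proving that the coordinate map preserves the divergences exactly, and justifying additivity of the R\'enyi divergence across the infinite product together with the interchange of limit, logarithm, and infinite sum or product. All of these rest on the square-summability $\sum_i\lambda_i^2<\infty$ from \Cref{lemma:expansions_feldman_hajek}, which keeps the Hilbert--Carleman and Fredholm determinants finite; verifying that the R\'enyi determinand is a genuine $I+L_1(\mathcal{H})$ perturbation is the one place where this has to be checked with care.
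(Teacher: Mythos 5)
Your argument is essentially correct, but it takes a genuinely different route from the paper. The paper's proof of \Cref{thm:gaussian_divergences} is a citation: it invokes the closed-form expressions of Minh (2021, Theorems 14 and 15) and only performs the notational translation $\mathcal{C}_2=\mathcal{C}_1^{1/2}\bigl(I+R(\mathcal{C}_2\Vert\mathcal{C}_1)\bigr)\mathcal{C}_1^{1/2}$ and $I-(1-\rho)S=\rho I+(1-\rho)\bigl(I+R(\mathcal{C}_2\Vert\mathcal{C}_1)\bigr)$ with $S=-R(\mathcal{C}_2\Vert\mathcal{C}_1)$, using \Cref{lemma:adjoint_of_densely_defined_operators}. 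You instead reprove the formulas from scratch: simultaneous diagonalisation via \Cref{lemma:expansions_feldman_hajek}, reduction to a countable product of scalar Gaussians in whitened coordinates, and summation of the one-dimensional divergences. Your scalar formulas are the right ones (I checked that $D_{\ren,\rho}$ as defined here equals $\rho^{-1}$ times the standard order-$\rho$ R\'enyi divergence, and your per-mode contributions match), the $O(\lambda_i^2)$ verification that the R\'enyi determinand is an $I+L_1(\mathcal{H})$ perturbation is correct since the $\rho$-derivative of $(1+\lambda)^{\rho-1}(\rho+(1-\rho)(1+\lambda))$ vanishes to first order in $\lambda$ at $\lambda=0$, and the two endpoint limits reduce, exactly as you say, to the eigenvalue substitution $\lambda_i\mapsto -\lambda_i/(1+\lambda_i)$ from \Cref{lemma:expansions_feldman_hajek}\ref{item:covariance_mix_3}--\ref{item:covariance_mix_4}. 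What your route buys is a self-contained derivation that makes visible why the divergences are eigenvalue sums of the form $\sum_i f(\lambda_i)$ --- which is precisely the structure the paper later exploits in \Cref{lemma:divergences_in_loss_class} --- at the price of redoing the measure-theoretic bookkeeping that the citation outsources: the construction of the Paley--Wiener functionals $\xi_i$ and their law under $\nu_2$, the exact preservation of both divergences under the coordinate map, and the additivity of the R\'enyi integral over an infinite product. The one step you should spell out more carefully is the dominated-convergence argument for the limits: the prefactor $1/(2\rho(1-\rho))$ blows up at the endpoints, so the majorant $C\lambda_i^2$ must be shown uniform in $\rho$ near $0$ and near $1$; this follows from the explicit form of $f_{\ren,\rho}$ but is not automatic from a pointwise Taylor expansion.
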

The limits above show that the R\'enyi divergence interpolates the forward KL, obtained in the limit $\rho\uparrow 1$, and the reverse KL, obtained in the limit $\rho\downarrow 0$, between Gaussian measures.

\begin{remark}[Amari $\alpha$-divergences and R\'{e}nyi divergences]
\label{rmk:correspondence_Amari_divergence_Renyi_divergence}
The family of Amari $\alpha$-divergences, which is defined for all $\alpha\geq 0$, is another family of divergences which interpolates the forward KL at $\alpha=1$ and reverse KL at $\alpha=0$, c.f.\ \cite[eq. (7)]{Li2024a}. For $\alpha\in(0,1)$ and $\alpha>1$, the Amari $\alpha$-divergence $D_{\am,\alpha}(\nu_2\Vert \nu_1)$ for equivalent measures $\nu_1$ and $\nu_2$ on $\mathcal{H}$ is defined by
\begin{align*}
	D_{\am,\alpha}(\nu_2\Vert\nu_1)\coloneqq\frac{-1}{\alpha(1-\alpha)}\left( \int_{\mathcal{H}}^{}\left( \frac{\d\nu_2}{\d\nu_1} \right)^\alpha\d\nu_1-1 \right),
\end{align*}
and for $\alpha\in(0,1)$ it is related to the $\rho$-R\'{e}nyi divergence in \eqref{eqn:renyi_divergence_order_rho} with $\rho\leftarrow\alpha$ by
\begin{equation*}
	D_{\ren,\alpha}(\nu_2\Vert \nu_1)=\frac{-1}{\alpha(1-\alpha)}\log[1-\alpha(1-\alpha)D_{\am,\alpha}(\nu_2\Vert \nu_1)],
\end{equation*}
that is,
\begin{align}
	\label{eqn:amari_renyi_relation}
	D_{\am,\alpha}(\nu_2\Vert\nu_1) = \frac{-1}{\alpha(1-\alpha)}\left(\exp[-\alpha(1-\alpha)D_{\ren,\alpha}(\nu_2\Vert\nu_1)]-1\right).
\end{align}
Since $\nu_1$ and $\nu_2$ are equivalent and $\alpha\in(0,1)$, $(\frac{\d\nu_2}{\d\nu_1})^\alpha>0$ with $\nu_1$-measure 1 and hence $1-\alpha(1-\alpha)D_{\am,\alpha}(\nu_2\Vert \nu_1)=\int(\tfrac{\mathrm{d}\nu_2}{\mathrm{d}\nu_1})^\alpha\mathrm{d}\nu_1$ is strictly positive. It follows that $D_{\ren,\alpha}(\nu_2\Vert \nu_1)$ is a strictly increasing function of $D_{\am,\alpha}(\nu_2\Vert \nu_1)$. Thus, for every $0<\alpha<1$, minimising the $\alpha$-R\'{e}nyi divergence corresponds to minimising the Amari $\alpha$-divergence, and vice versa.
\end{remark}

\begin{remark}[Hellinger distance]
	\label{rmk:hellinger_divergence}
	Let us denote the Hellinger distance between equivalent measures $\nu_1$ and $\nu_2$ on $\mathcal{H}$ by $D_\hel(\nu_1,\nu_2)$, i.e.\ 
	\begin{align*}
		D_\hel(\nu_2,\nu_1)^2 \coloneqq \int_{\mathcal{H}}^{}\left( 1-\sqrt{\frac{\d\nu_2}{\d\nu_1}} \right)^2\d\nu_1 = 2-2\int_{\mathcal{H}}^{}\sqrt{\frac{\d\nu_2}{\d\nu_1}}\d\nu_1.
	\end{align*}
	It holds that 
	\begin{align}
		\label{eqn:hellinger_renyi_relation}
		D_\hel(\nu_2,\nu_1)^2=2(1-\exp(-D_{\ren,1/2}(\nu_2\Vert\nu_1))),
	\end{align}
	by e.g.\ \cite[eqs.\ (134)--(135)]{minh_regularized_2021}, and it follows that minimising the Hellinger distance $D_\hel(\nu_2,\nu_1)$ is equivalent to minimising the Bhattacharyya distance $D_{\ren,1/2}(\nu_2\Vert\nu_1)$ and vice versa.
\end{remark}

\section{Optimal approximations of covariance operators}
\label{sec:optimal_approximation_covariance}

In this section, we formulate a minimisation problem that aims at finding low-rank approximations of $\mathcal{C}_\pos$ that are optimal simultaneously with respect to all members of a class of spectral loss functions. This class includes the R\'enyi divergences and forward and reverse KL divergences as special cases. The loss class and the low-rank covariance approximation problems are introduced in \Cref{subsec:spectral_loss_functions_and_problem_formulation}, the equivalence to the exact posterior of the approximations considered in \Cref{sec:formulation} is studied in \Cref{subsec:equivalence_to_the_exact_posterior}, the approximation problems are formulated as minimisation problems involving a differentiable function in \Cref{subsec:differentiability_and_minimisers_of_covariance_loss_functions}, and the approximation problems are solved in \Cref{subsec:optimal_low_rank_posterior_covariacne_approximations}.
The proofs of all the results in this section are given in \Cref{subsec:proofs_for_optimal_approximation_covariance}.

\subsection{Spectral loss functions and problem formulation}
\label{subsec:spectral_loss_functions_and_problem_formulation}

To measure the quality of a given approximation of the exact posterior covariance $\mathcal{C}_\pos$, we define a class of loss functions on $\mathcal{E}^2$ in the following way. Recall the definition of the eigenvalue map $\Lambda$ defined on Hilbert--Schmidt operators, from \Cref{sec:notation}. Also recall the definition of the Hilbert--Schmidt operator-valued map $R(\cdot\Vert\cdot)$ from \eqref{eqn:feldman_hajek_operator}. Define
\begin{subequations}
	\label{eqn:definitions_losses}
	\begin{align}
		\label{eqn:definition_spectral_f}
		\mathscr{F}& \coloneqq \left\{ f\in C^1( (-1,\infty) ):\ f(0)=0,\ xf'(x)>0 \text{ for } x\not=0,\ \lim_{x\rightarrow\infty}f(x)=\infty,\ f'\text{ Lipschitz at } 0 \right\}, \\
		\label{eqn:definition_loss_class}
		\mathscr{L} &\coloneqq \left\{ \mathcal{E}\times \mathcal{E}\ni (\mathcal{C}_2,\mathcal{C}_1)\mapsto \mathcal{L}_f(\mathcal{C}_2\Vert \mathcal{C}_1)\coloneqq \sum_{i}^{}f(\Lambda_i(R(\mathcal{C}_2\Vert \mathcal{C}_1))) :\ f\in\mathscr{F}\right\}.
	\end{align}
\end{subequations}
As we show in \Cref{lemma:finite_loss} below, the conditions $f\in C^1( (-1,\infty))$ and $xf'(x)>0$ for $x\not=0$ ensure that $0$ is the unique minimiser of every $f\in\mathscr{F}$. The Lipschitz continuity of $f'$ at $0$ implies that $(f(x_i))_i$ is summable for $(x_i)_i\in\ell^2((-1,\infty))$, so that every $\mathcal{L}\in\mathscr{L}$ takes only finite values. Furthermore, this Lipschitz continuity implies that $\mathcal{L}(\mathcal{C}_\pos\Vert\cdot)$ is differentiable on a suitable subspace of $\mathcal{E}$, as will be shown later in \Cref{lemma:differentiability_of_decomposition}. The blowup at infinity condition is used to prove coercivity of $\mathcal{L}(\mathcal{C}_\pos\Vert\cdot)$ on suitable subspaces of $\mathcal{E}$, as we show in \Cref{lemma:coercivity}. 

\begin{restatable}{lemma}{finiteLoss}
	\label{lemma:finite_loss}
	Let $\mathscr{F}$ be given in \eqref{eqn:definition_spectral_f} and $f\in\mathscr{F}$. Then
	\begin{enumerate}
		\item 
			\label{item:properties_spectral_f}
			$f'(x)=0$ if and only if $x=0$, the image of $f$ lies in $[0,\infty)$ and for every $x\in\ell^2( (-1,\infty))$ it holds that $\sum_{i}^{}f(x_i)<\infty$. In particular, the image of every $\mathcal{L}_f\in\mathscr{L}$ lies in $[0,\infty)$.
		\item 
			\label{item:closed_under_transform}
			Let $\eta:(-1,\infty)\rightarrow (-1,\infty)$ be defined by $\eta(x)=\frac{-x}{1+x}$.	If $f\in\mathscr{F}$ satisfies $\lim_{x\rightarrow -1} f(x)=\infty$, then $f\circ \eta\in\mathscr{F}$.
	\end{enumerate}
\end{restatable}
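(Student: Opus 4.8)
The plan is to treat part \ref{item:properties_spectral_f} by verifying its three claims in turn and then deducing the statement about $\mathscr{L}$, and to treat part \ref{item:closed_under_transform} by checking each defining condition of $\mathscr{F}$ for the composition $g\coloneqq f\circ\alpha$. First I would record a preliminary observation used throughout: since $f\in C^1((-1,\infty))$ and $xf'(x)>0$ for $x\neq 0$, letting $x\to 0^+$ and $x\to 0^-$ forces $f'(0)\geq 0$ and $f'(0)\leq 0$, so $f'(0)=0$; combined with $xf'(x)>0$ this gives $f'(x)=0$ if and only if $x=0$. The sign condition also says $f$ is strictly decreasing on $(-1,0)$ and strictly increasing on $(0,\infty)$, so $x=0$ is the unique global minimiser, and since $f(0)=0$ the image of $f$ lies in $[0,\infty)$.

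The heart of part \ref{item:properties_spectral_f}, and the step I expect to be the main obstacle, is the summability claim $\sum_i f(x_i)<\infty$ for $x=(x_i)_i\in\ell^2((-1,\infty))$; this is precisely where the Lipschitz-at-$0$ hypothesis on $f'$ is used. The idea is that $\ell^2$ membership forces $x_i\to 0$, so a quadratic bound on $f$ near $0$ controls the tail. Concretely, Lipschitz continuity of $f'$ at $0$ together with $f'(0)=0$ gives constants $L,\delta>0$ with $|f'(t)|\leq L|t|$ for $|t|\leq\delta$; integrating $f(x)=\int_0^x f'(t)\,\d t$ then yields $0\leq f(x)\leq \tfrac{L}{2}x^2$ whenever $|x|\leq\delta$. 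Since $x_i\to 0$, only finitely many indices violate $|x_i|\leq\delta$; these contribute a finite sum, while the remaining terms are bounded by $\tfrac{L}{2}\sum_i x_i^2<\infty$. For the final assertion, by \Cref{thm:feldman--hajek}\ref{item:fh_covariance} the operator $R(\mathcal{C}_2\Vert\mathcal{C}_1)$ lies in $L_2(\H)$, so its eigenvalue sequence is square-summable, and by \Cref{lemma:expansions_feldman_hajek} these eigenvalues lie in $(-1,\infty)$; applying the summability claim to this sequence shows $\mathcal{L}_f(\mathcal{C}_2\Vert\mathcal{C}_1)=\sum_i f(\Lambda_i(R(\mathcal{C}_2\Vert\mathcal{C}_1)))\in[0,\infty)$.

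For part \ref{item:closed_under_transform}, I would first compute $\alpha'(x)=-(1+x)^{-2}<0$, so $\alpha$ is a smooth strictly decreasing bijection of $(-1,\infty)$ with $\alpha(0)=0$, $\alpha(x)<0$ for $x>0$, and $\alpha(x)>0$ for $-1<x<0$. Then I check the conditions on $g=f\circ\alpha$ one by one. Membership $g\in C^1$ and $g(0)=f(0)=0$ are immediate from the chain rule, which also gives $g'(x)=f'(\alpha(x))\alpha'(x)$. The sign condition $xg'(x)>0$ for $x\neq 0$ follows by sign-chasing: for $x>0$ we have $\alpha(x)<0$, hence $f'(\alpha(x))<0$, while $\alpha'(x)<0$, so $g'(x)>0$; for $-1<x<0$ we have $\alpha(x)>0$, hence $f'(\alpha(x))>0$, while $\alpha'(x)<0$, so $g'(x)<0$. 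The blow-up condition is where the extra hypothesis enters: as $x\to\infty$ we have $\alpha(x)\to -1^+$, so $\lim_{x\to\infty}g(x)=\lim_{y\to -1^+}f(y)=\infty$ by assumption. Finally, for Lipschitz continuity of $g'$ at $0$, note $g'(0)=f'(0)\alpha'(0)=0$, and near $0$ both $|\alpha'(x)|$ and $|\alpha(x)|/|x|$ are bounded (say for $|x|\leq\tfrac12$); thus the estimate $|f'(\alpha(x))|\leq L|\alpha(x)|$ coming from Lipschitzness of $f'$ propagates to $|g'(x)|=|f'(\alpha(x))|\,|\alpha'(x)|\leq C|x|$ for a suitable constant $C$, giving the desired Lipschitz bound. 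This verifies every defining property, so $f\circ\alpha\in\mathscr{F}$.
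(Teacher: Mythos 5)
Your proposal is correct and follows essentially the same route as the paper: the sign analysis of $xf'(x)>0$ to get $f'(0)=0$ and nonnegativity, the quadratic bound $f(x)\leq \tfrac{L}{2}x^2$ near $0$ from Lipschitz continuity of $f'$ to control the tail of $\sum_i f(x_i)$, and a direct verification of each defining condition of $\mathscr{F}$ for $f\circ\alpha$ via the chain rule. The only cosmetic difference is that in part (ii) you derive the bound $|(f\circ\alpha)'(x)|\leq C|x|$ explicitly, whereas the paper invokes closure of Lipschitz-at-$0$ functions under composition and products; these are the same argument.
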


The class of loss functions considered in the finite-dimensional setting of \cite[Definition 2.1]{Spantini2015} differs from the class \eqref{eqn:definitions_losses} in two aspects. For every function $f$ in the former, the domain is $(0,\infty)$ and $f$ need not have minimum equal to 0,
while for every function in the latter, the domain is $(-1,\infty)$ and we require $f(0)=0$. That the natural class to consider involves the horizontal shift of $-1$ and the vertical shift becomes apparent as the fundamental object governing the losses is given by the operator $R(\mathcal{C}_2\Vert\mathcal{C}_1)$ defined in \eqref{eqn:feldman_hajek_operator}, which is a compact operator and therefore has an eigenvalue sequence accumulating at 0. Second, there is an additional Lipschitz condition in \eqref{eqn:definition_spectral_f}, which implies that $\mathcal{L}$ is finite on $\mathcal{E}^2$ for every $\mathcal{L}\in\mathscr{L}$. Note that the condition that $f'$ is Lipschitz continuous at $0$ is not implied by the other conditions in \eqref{eqn:definition_spectral_f}, as the function $f$ with $f'(x)=\operatorname{sgn}(x)\abs{x}^\alpha$, $\alpha\in(0,1)$, and $f(0)=0$ shows. Here, $\operatorname{sgn}(x)$ denotes the function assigning $1$ to $x\geq 0$ and $-1$ otherwise. This function satisfies all conditions of \eqref{eqn:definition_spectral_f} except the Lipschitz condition of $f'$ at 0.

While restricted compared to the class in \cite[Definition 2.1]{Spantini2015}, the class \eqref{eqn:definitions_losses} is still rich enough to include the forward and reverse KL divergences and R\'enyi divergences between equivalent Gaussian measures with the same mean, as shown in the following result. This result partially extends \cite[Lemma 2.2]{Spantini2015}, in which the analogous statement is shown for the forward KL divergence in the finite-dimensional setting. 

\begin{restatable}{lemma}{divergencesInLossClass}
	Let $m\in\mathcal{H}$. Let $\mu_i=\mathcal{N}(m,\mathcal{C}_i)$ be nondegenerate and $\mathcal{C}_i\in\mathcal{E}$ for $i=1,2$.
	\begin{enumerate}
	\label{lemma:divergences_in_loss_class}
		\item 
			\label{item:f_for_kl_divergence}
			Let $f_{\kl}(x)\coloneqq \frac{1}{2}(x-\log(1+x))$. Then $f_{\kl}\in\mathscr{F}$ and 
			\begin{align*}
				D_{\kl}(\mu_2\Vert\mu_1) = -\frac{1}{2}\log\det_2(I+R(\mathcal{C}_2\Vert \mathcal{C}_1)) = \mathcal{L}_{f_\kl}(\mathcal{C}_2\Vert \mathcal{C}_1).
			\end{align*}
		\item
			\label{item:f_for_renyi_divergence}
			Let $\rho\in(0,1)$ and $f_{\ren,\rho}(x)\coloneqq \frac{\rho-1}{2\rho(1-\rho)}\log(1+x)+\frac{1}{2\rho(1-\rho)}\log{(\rho+(1-\rho)(1+x))}.$ Then $f_{\ren,\rho}\in\mathscr{F}$ and
			\begin{align*}
				D_{\ren,\rho}(\mu_2\Vert\mu_1) = \frac{\log\det\left[ \bigl(I+R(\mathcal{C}_2\Vert \mathcal{C}_1)\bigr)^{\rho-1}\bigl(\rho I + (1-\rho)(I+R(\mathcal{C}_2\Vert \mathcal{C}_1))\bigr)\right]}{2\rho(1-\rho)} = \mathcal{L}_{f_{\ren,\rho}}(\mathcal{C}_2\Vert \mathcal{C}_1).
			\end{align*}
		\item
			\label{item:f_for_reverse_divergence}
			For the reverse divergences, we have $f_{\kl}\circ \eta,f_{\ren,\rho}\circ \eta\in\mathscr{F}$ with $\eta(x)\coloneqq\frac{-x}{1+x}$ on $(-1,\infty)$, and
			\begin{align*}
				D_{\kl}(\mu_1\Vert\mu_2)=\mathcal{L}_{f_\kl\circ \eta}(\mathcal{C}_2\Vert\mathcal{C}_1),\quad D_{\ren,\rho}(\mu_1\Vert\mu_2)=\mathcal{L}_{f_{\ren,\rho}\circ \eta}(\mathcal{C}_2\Vert \mathcal{C}_1).
			\end{align*}
	\end{enumerate}
\end{restatable}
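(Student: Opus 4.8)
The plan is to combine two independent ingredients for each part: an elementary verification that the candidate scalar function lies in $\mathscr{F}$, and an identification of the spectral sum $\sum_i f(\Lambda_i(R(\mathcal{C}_2\Vert\mathcal{C}_1)))$ with the determinant expressions furnished by \Cref{thm:gaussian_divergences}. Since $\mu_1$ and $\mu_2$ share the mean $m$, the mean-dependent summand in both \eqref{eqn:kullback_leibler_divergence} and \eqref{eqn:renyi_divergence_order_rho} vanishes, so only the log-determinant terms survive throughout. The hypotheses of \Cref{thm:gaussian_divergences} are met because $m_1-m_2=0\in\ran{\mathcal{C}_1^{1/2}}$ and, as $\mathcal{C}_1,\mathcal{C}_2\in\mathcal{E}$ forces $\mathcal{N}(m_\pos,\mathcal{C}_1)\sim\mathcal{N}(m_\pos,\mathcal{C}_2)$, the operator $\mathcal{C}_1^{-1/2}\mathcal{C}_2^{1/2}$ satisfies property E by closure of property E under composition and inversion (\Cref{def:property_E}). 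I will treat \ref{item:f_for_kl_divergence} and \ref{item:f_for_renyi_divergence} directly and then deduce \ref{item:f_for_reverse_divergence} by a change of variables.

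For \ref{item:f_for_kl_divergence} I would first record $f_{\kl}'(x)=\tfrac12-\tfrac{1}{2(1+x)}=\tfrac{x}{2(1+x)}$, from which $f_{\kl}(0)=0$, $xf_{\kl}'(x)=\tfrac{x^2}{2(1+x)}>0$ for $x\neq 0$, $f_{\kl}(x)\to\infty$ as $x\to\infty$, and the Lipschitz continuity of $f_{\kl}'$ at $0$ (as $f_{\kl}'$ is $C^1$ near $0$) all follow immediately, so $f_{\kl}\in\mathscr{F}$. For the identity, writing $\lambda_i\coloneqq\Lambda_i(R(\mathcal{C}_2\Vert\mathcal{C}_1))$ and applying the Hilbert--Carleman factorisation \eqref{eqn:determinant_for_L_2} gives $\log\det_2(I+R(\mathcal{C}_2\Vert\mathcal{C}_1))=\sum_i[\log(1+\lambda_i)-\lambda_i]$, whence $-\tfrac12\log\det_2(\cdots)=\tfrac12\sum_i[\lambda_i-\log(1+\lambda_i)]=\sum_i f_{\kl}(\lambda_i)=\mathcal{L}_{f_{\kl}}(\mathcal{C}_2\Vert\mathcal{C}_1)$; combined with \eqref{eqn:kullback_leibler_divergence} this closes the case.

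For \ref{item:f_for_renyi_divergence} the key simplification is $\rho+(1-\rho)(1+x)=1+(1-\rho)x$, which is positive on $(-1,\infty)$; differentiating then yields the clean form $f_{\ren,\rho}'(x)=\tfrac{x}{2(1+x)(1+(1-\rho)x)}$, and the four defining properties of $\mathscr{F}$ follow exactly as above, the growth at $+\infty$ coming from the leading term $\tfrac{\log x}{2(1-\rho)}$. For the identity I would diagonalise $R(\mathcal{C}_2\Vert\mathcal{C}_1)$ so that $T\coloneqq(I+R)^{\rho-1}(\rho I+(1-\rho)(I+R))$ has eigenvalues $t_i=(1+\lambda_i)^{\rho-1}(1+(1-\rho)\lambda_i)$. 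The step requiring care is that the Fredholm determinant $\det(T)$ be well defined: a Taylor expansion gives $t_i-1=O(\lambda_i^2)$ as $\lambda_i\to 0$, so $T-I$ is trace-class because $(\lambda_i)_i\in\ell^2$, and \eqref{eqn:determinant_for_L_1} then yields $\log\det(T)=\sum_i[(\rho-1)\log(1+\lambda_i)+\log(1+(1-\rho)\lambda_i)]$. Dividing by $2\rho(1-\rho)$ reproduces $\sum_i f_{\ren,\rho}(\lambda_i)$, matching \eqref{eqn:renyi_divergence_order_rho}.

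Finally, for \ref{item:f_for_reverse_divergence} I would first check $\lim_{x\to-1}f_{\kl}(x)=\lim_{x\to-1}f_{\ren,\rho}(x)=\infty$ (both driven by the $\log(1+x)$ term carrying a negative coefficient), so \Cref{lemma:finite_loss}\ref{item:closed_under_transform} gives $f_{\kl}\circ\alpha,f_{\ren,\rho}\circ\alpha\in\mathscr{F}$. For the identities, applying \ref{item:f_for_kl_divergence} and \ref{item:f_for_renyi_divergence} with $\mathcal{C}_1$ and $\mathcal{C}_2$ interchanged gives $D_{\kl}(\mu_1\Vert\mu_2)=\sum_i f_{\kl}(\Lambda_i(R(\mathcal{C}_1\Vert\mathcal{C}_2)))$ and the analogue for the R\'enyi divergence. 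The decisive observation is that $R(\mathcal{C}_1\Vert\mathcal{C}_2)$ is exactly the operator of \Cref{lemma:expansions_feldman_hajek}\ref{item:covariance_mix_3}, whose eigenvalues are $\tfrac{-\lambda_i}{1+\lambda_i}=\alpha(\lambda_i)$; substituting $\Lambda_i(R(\mathcal{C}_1\Vert\mathcal{C}_2))=\alpha(\lambda_i)$ turns each spectral sum into $\sum_i (f\circ\alpha)(\lambda_i)=\mathcal{L}_{f\circ\alpha}(\mathcal{C}_2\Vert\mathcal{C}_1)$. I expect the main obstacle to be the trace-class bookkeeping in \ref{item:f_for_renyi_divergence}---in particular the second-order cancellation $t_i-1=O(\lambda_i^2)$ that upgrades the naive $\ell^2$ control to the $\ell^1$ control needed for $\det(T)$---together with correctly matching the adjoint ordering so that $R(\mathcal{C}_1\Vert\mathcal{C}_2)$ is identified with the right item of \Cref{lemma:expansions_feldman_hajek}.
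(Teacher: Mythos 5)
Your proposal is correct and follows essentially the same route as the paper: both reduce to the log-determinant terms of \Cref{thm:gaussian_divergences}, expand $\det_2$ and $\det$ via the eigenvalue products \eqref{eqn:determinant_for_L_2}--\eqref{eqn:determinant_for_L_1}, and obtain the reverse divergences from \Cref{lemma:finite_loss}\ref{item:closed_under_transform} together with the eigenvalue correspondence $\Lambda_i(R(\mathcal{C}_1\Vert\mathcal{C}_2))=\alpha(\lambda_i)$ from \Cref{lemma:expansions_feldman_hajek}. Your explicit check that $t_i-1=O(\lambda_i^2)$, so that the operator inside the Fredholm determinant is a trace-class perturbation of the identity, is a detail the paper's proof leaves implicit and is worth keeping.
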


Given the approximation classes \eqref{eqn:class_approx_covariance} and \eqref{eqn:class_approx_precision} and given the covariance loss functions in \eqref{eqn:definition_loss_class}, we can define the following low-rank approximation problem, for every $r\leq n$. We do not consider the case $r>n$, because in this case the problems have the trivial solutions $\mathcal{C}_\pos$ and $\mathcal{C}_\pos^{-1}$ respectively.

\begin{problem}[Rank-$r$ nonpositive covariance updates] 
	\label{prob:optimal_covariance}
	Find $\mathcal{C}^\opt_r\in\mathscr{C}_r$ such that for every $\mathcal{L}\in\mathscr{L}$, $\mathcal{L}(\mathcal{C}_\pos\Vert\mathcal{C}^\opt_r)=\min\{\mathcal{L}(\mathcal{C}_\pos\Vert\mathcal{C}):\ \mathcal{C}\in\mathscr{C}_r\}$.
\end{problem}

\begin{problem}[Inverses of rank-$r$ nonnegative precision updates] 
	\label{prob:optimal_precision}
	Find $\mathcal{P}^\opt_r\in\mathscr{P}_r$ such that for every $\mathcal{L}\in\mathscr{L}$, $\mathcal{L}(\mathcal{C}_\pos\Vert (\mathcal{P}^\opt_r)^{-1})=\min\{\mathcal{L}(\mathcal{C}_\pos\Vert\mathcal{P}^{-1} ):\ \mathcal{P}\in\mathscr{P}_r\}$.
\end{problem}

We note that even if an optimal covariance and precision can be found for some given $\mathcal{L}\in\mathscr{L}$, it is not a priori clear that they are in fact independent of $\mathcal{L}$. 

We also emphasise the following. Since the inverse of a self-adjoint positive matrix is again self-adjoint and positive, inverses of covariance operators are covariance operators in the case $\dim\mathcal{H}<\infty$, but not if $\dim\mathcal{H}=\infty$. This is because the trace-class property is not preserved under inversion in infinite dimensions. In fact, if $\dim{\H}=\infty$ and $T$ is trace class, then $T^{-1}$ is an unbounded operator and its eigenvalue sequence is not summable since this sequence is not bounded. In order to define a loss on precisions, as is done in the finite-dimensional case of \cite[Corollary 3.1]{Spantini2015}, one approach is to extend the domain of $\mathcal{L}\in\mathscr{L}$ to $\mathcal{E}^2\cup(\mathcal{E}^{-1})^{2}$ via $\mathcal{L}_{ext}(\mathcal{C}_1^{-1}\Vert \mathcal{C}_2^{-1}) \coloneqq \mathcal{L}(\mathcal{C}_1\Vert\mathcal{C}_2)$ and $\mathcal{L}_{ext}(\mathcal{C}_1\Vert\mathcal{C}_2)\coloneqq\mathcal{L}(\mathcal{C}_1\Vert\mathcal{C}_2)$ for $\mathcal{C}_1,\mathcal{C}_2\in \mathcal{E}$. 
If $(\mathcal{C}_1,\mathcal{C}_2)\in \mathcal{E}^2\cap(\mathcal{E}^{-1})^2$, then $\mathcal{L}_{ext}(\mathcal{C}_1^{-1}\Vert\mathcal{C}_2^{-1}) = \mathcal{L}(\mathcal{C}_1\Vert \mathcal{C}_2)=\mathcal{L}_{ext}(\mathcal{C}_1\Vert\mathcal{C}_2)$, showing that $\mathcal{L}_{ext}$ is well-defined.
By the definitions \eqref{eqn:definitions_losses}, \eqref{eqn:feldman_hajek_operator} and \Cref{lemma:expansions_feldman_hajek}, $\mathcal{L}$ depends on $\mathcal{C}_1$ and $\mathcal{C}_2$ only via the set of eigenvalues of the bounded extensions of the densely defined operator $\mathcal{C}_1^{-1/2}\mathcal{C}_2\mathcal{C}_1^{-1/2}-I$. \Cref{lemma:expansions_feldman_hajek}\ref{item:covariance_mix_1}-\ref{item:covariance_mix_2} show that eigenvalues of the latter operator remain unchanged when replacing $\mathcal{C}_1$ by $\mathcal{C}_2^{-1}$ and $\mathcal{C}_2$ by $\mathcal{C}_1^{-1}$. Thus, $\mathcal{L}_{ext}(\mathcal{C}_2^{-1}\Vert\mathcal{C}_1^{-1})=\mathcal{L}_{ext}(\mathcal{C}_1\Vert\mathcal{C}_2)=\mathcal{L}(\mathcal{C}_1\Vert\mathcal{C}_2)$. This equation firstly implies that $\mathcal{L}(\mathcal{C}_\pos\Vert\mathcal{P}^{-1})=\mathcal{L}_{ext}(\mathcal{P}\Vert\mathcal{C}_\pos^{-1})$ for $\mathcal{P}\in\mathscr{P}_r$, and we can reformulate \Cref{prob:optimal_precision} accordingly in terms of a loss $\mathcal{L}_{ext}$ on precisions. Secondly, it shows that there is no need to explicitly define a loss on precisions, as we can just use $\mathcal{L}$ on the corresponding covariances in reverse order instead.

\subsection{Equivalence to target measures of low-rank Gaussian approximations}
\label{subsec:equivalence_to_the_exact_posterior}

As discussed in \Cref{sec:equivalence_and_divergences_between_gaussian_measures}, not all approximations $\mathcal{N}(m_\pos,\mathcal{C}_\pr-KK^*)$ are probability measures equivalent to $\mu_\pos$. This equivalence holds only if $\mathcal{C}_\pr-KK^*\in\mathcal{E}$, with $\mathcal{E}$ defined in \eqref{eqn:equivalent_covariances}. The first aim of this section is to characterise the sets $\mathscr{C}_{r,\mathcal{E}}\coloneqq\{\mathcal{C}_\pr-KK^*\in\mathcal{E}:\ K\in\B(\R^r,\H)\}$ and $\mathscr{P}_{r,\mathcal{E}}\coloneqq\{(\mathcal{C}_\pr-KK^*)^{-1}:\ \mathcal{C}_\pr-KK^*\in\mathcal{E},\ K\in\B(\R^r,\H)\}$, which is done in \Cref{lemma:properties_equivalence}\ref{item:properties_equivalence_3} and \Cref{prop:range_of_K}\ref{item:range_of_K_1} respectively. We write $\mathscr{C}_{r,\mathcal{E}}^{-1}\coloneqq \{\mathcal{C}^{-1}:\ \mathcal{C}\in\mathscr{C}_{r,\mathcal{E}}\}=\mathscr{P}_{r,\mathcal{E}}$. The results are formulated for arbitrary Gaussian measures, because they are not intrinsic to the Bayesian formulation. We also show that $\mathscr{C}_r\subset\mathscr{C}_{r,\mathcal{E}}$ and $\mathscr{P}_r\subset\mathscr{P}_{r,\mathcal{E}}$, with $\mathscr{C}_r$ and $\mathscr{P}_r$ from \eqref{eqn:class_approx_covariance} and \eqref{eqn:class_approx_precision} respectively, and that these inclusions are strict. In this section, we also determine the relationship between $\mathscr{C}_r^{-1}\coloneqq\{\mathcal{C}^{-1}:\ \mathcal{C}\in\mathscr{C}_r\}$ and $\mathscr{P}_r$, and between \Cref{prob:optimal_covariance} and \Cref{prob:optimal_precision}.

We shall characterise the elements in $\mathcal{E}$ of the form $\mathcal{C}_\pr-KK^*$ with $K\in\B(\R^r,\H)$ for some $r\in\N$ using \Cref{lemma:properties_equivalence}. Because this result is not intrinsic to the Bayesian interpretation, we formulate it for the more general set $\mathcal{E}(m_1,\mathcal{C}_1)$ defined in \eqref{eqn:equivalent_covariances_general}, which contains all covariances $\mathcal{C}$ such that $\mathcal{N}(m_1,\mathcal{C})\sim\mathcal{N}(m_1,\mathcal{C}_1)$, for arbitrary Gaussian target measures $\mathcal{N}(m_1,\mathcal{C}_1)$ with $m_1\in\H$ and injective $\mathcal{C}_1\in L_1(\H)_\R$. \Cref{lemma:properties_equivalence} shows that the operator $I-(\mathcal{C}_1^{-1/2}K)(\mathcal{C}_1^{-1/2}K)^*$ is important for determining whether $\mathcal{C}=\mathcal{C}_1-KK^*$ belongs to $\mathcal{E}(m_1,\mathcal{C}_1)$. \Cref{item:properties_equivalence_2} shows that the assumption that $I-(\mathcal{C}_1^{-1/2}K)(\mathcal{C}_1^{-1/2}K)^*$ is well-defined and nonnegative, is equivalent to $\mathcal{C}\geq 0$, which is necessary for $\mathcal{C}\in\mathcal{E}(m_1,\mathcal{C}_1)$. \Cref{item:properties_equivalence_3} shows that this assumption with the additional assumption of invertibility of $I-(\mathcal{C}_1^{-1/2}K)(\mathcal{C}_1^{-1/2}K)^*$ is both necessary and sufficient for $\mathcal{C}\in\mathcal{E}(m_1,\mathcal{C}_1)$. By \cref{item:properties_equivalence_1}, $I-(\mathcal{C}_1^{-1/2}K)(\mathcal{C}_1^{-1/2}K)^*$ is well-defined under the range condition $\ran{K}\subset\ran{\mathcal{C}_1^{1/2}}$. \Cref{item:properties_equivalence_4} relates the properties of the eigenvalues of $I-(\mathcal{C}_1^{-1/2}K)(\mathcal{C}_1^{-1/2}K)^*$ to the properties $\mathcal{C}\geq 0$ or $\mathcal{C}\in\mathcal{E}(m_1,\mathcal{C}_1)$ of $\mathcal{C}$. If $\mathcal{C}>0$, then \cref{item:properties_equivalence_4} together with \Cref{lemma:extension_of_finite_basis_in_dense_subspace} also shows that the range condition $\ran{K}\subset\ran{\mathcal{C}_1}$ implies the diagonalisability of $I-(\mathcal{C}_1^{-1/2}K)(\mathcal{C}_1^{-1/2}K)^*$ in the Cameron--Martin space of $\mathcal{N}(m_1,\mathcal{C}_1)$.

\begin{restatable}{lemma}{propertiesEquivalence}
	\label{lemma:properties_equivalence}
	Let $\mathcal{C}_1\in L_1(\H)_\R$ be injective and $m_1\in\H$. Let $\mathcal{C}\coloneqq\mathcal{C}_1-KK^*$ for some $K\in\B(\R^r,\H)$ and $r\in\N$. The following holds:
	\begin{enumerate}
		\item
			\label{item:properties_equivalence_1}
			If $\ran{K}\subset\ran{\mathcal{C}_1^{1/2}}$, then $\X\coloneqq I-(\mathcal{C}_1^{-1/2}K)(\mathcal{C}_1^{-1/2}K)^*$ is well-defined and $\mathcal{C}=\mathcal{C}_1^{1/2}\X\mathcal{C}_1^{1/2}$,
		\item 
			\label{item:properties_equivalence_2}
			$\mathcal{C}\geq 0$ if and only if $\ran{K}\subset\ran{\mathcal{C}_1^{1/2}}$ and $\X\geq 0$,
		\item
			\label{item:properties_equivalence_3}
			The following are equivalent:
			\begin{enumerate}
				\item 
					\label{item:properties_equivalence_3_1}
					$\mathcal{C}\in\mathcal{E}(m_1,\mathcal{C}_1)$, with $\mathcal{E}(m_1,\mathcal{C}_1)$ defined in \eqref{eqn:equivalent_covariances_general},
				\item 
					\label{item:properties_equivalence_3_2}
					$\mathcal{C}>0$ and $\ran{\mathcal{C}^{1/2}}=\ran{\mathcal{C}}_1^{1/2}$,
				\item 
					\label{item:properties_equivalence_3_3}
					$\mathcal{C}\geq 0$ and $\ran{\mathcal{C}^{1/2}}=\ran{\mathcal{C}}_1^{1/2}$,
				\item 
					\label{item:properties_equivalence_3_4}
					$\ran{K}\subset\ran{\mathcal{C}_1^{1/2}}$, $\X\geq 0$ and $\ran{\mathcal{C}^{1/2}}=\ran{\mathcal{C}}_1^{1/2}$,
				\item
					\label{item:properties_equivalence_3_5}
					$\ran{K}\subset\ran{\mathcal{C}_1^{1/2}}$, $\X\geq 0$ and $\X$ is invertible.
			\end{enumerate}
			\item
				\label{item:properties_equivalence_4}
				Let $\mathcal{C}\geq 0$. Then $\X=I-\sum_{i=1}^{\rank{K}}d_i^2 e_i\otimes e_i$ with $(d_i^2)_{i=1}^{\rank{K}}\subset(0,1]$ nonincreasing and $(e_i)_{i=1}^{\rank{K}}$ orthonormal. The equivalent statements of \cref{item:properties_equivalence_3} hold if and only if $(d_i^2)_i\subset(0,1)$. If additionally $\mathcal{C}>0$ and $\ran{K}\subset\ran{\mathcal{C}_1}$, then $(d_i^2)_{i=1}^{\rank{K}}\subset(0,1)$ and $(e_i)_{i=1}^{\rank{K}}\subset\ran{\mathcal{C}}_1^{1/2}$.
	\end{enumerate}
\end{restatable}

With \Cref{lemma:properties_equivalence} describing those elements in $\mathcal{E}$ of the form $\mathcal{C_\pr}-KK^*$ with $K\in\B(\R^r,\H)$, we can now characterise the inverses of these elements, and also the inverses of the elements in $\mathscr{C}_r$ and $\mathscr{P}_r$. As we did for \Cref{lemma:properties_equivalence}, we state the result for low-rank approximations of injective covariances of arbitrary Gaussian measures, rather than only for the prior.

\begin{restatable}{proposition}{propRangeOfK}
	\label{prop:range_of_K}
	Let $\mathcal{C},\mathcal{C}_1\in L_1(\H)_\R$, $m_1\in\H$ and $r\in\N$. Suppose $\mathcal{C}_1$ is injective. The following hold:
	\begin{enumerate}
		\item
			\label{item:range_of_K_1}
			$\mathcal{C}=\mathcal{C}_1-KK^*$ for $K\in\B(\R^r,\H)$ and $\mathcal{C}\in\mathcal{E}(m_1,\mathcal{C}_1)$ if and only if $\mathcal{C}$ is injective and $\mathcal{C}^{-1} = \mathcal{C}_1^{-1/2}(I+ZZ^*)\mathcal{C}_1^{-1/2}$ on $\ran{\mathcal{C}}$ for some $Z\in\B(\R^r,\mathcal{H})$. In this case, $\rank{Z}=\rank{K}$.
		\item
			\label{item:range_of_K_2}
			$\mathcal{C}=\mathcal{C}_1-KK^*$ for $K\in\B(\R^r,\H)$, $\mathcal{C}\in\mathcal{E}(m_1,\mathcal{C}_1)$ and $\ran{K}\subset\ran{\mathcal{C}_1}$ if and only if $\mathcal{C}$ is injective, $\ran{\mathcal{C}}=\ran{\mathcal{C}_1}$ and $\mathcal{C}^{-1}=\mathcal{C}_1^{-1}+UU^*$ on $\ran{\mathcal{C}_1}$ for some $U\in\B(\R^r,\H)$. In this case, $\rank{U}=\rank{K}$.
		\item
			\label{item:range_of_K_3}
			$\mathcal{C}=\mathcal{C}_1-KK^*$ for $K\in\B(\R^r,\H)$, $\mathcal{C}>0$ and $\ran{K}\subset\ran{\mathcal{C}_1}$ if and only if $\mathcal{C}$ is injective, $\ran{\mathcal{C}}=\ran{\mathcal{C}_1}$ and $\mathcal{C}^{-1}=\mathcal{C}_1^{-1}+UU^*$ on $\ran{\mathcal{C}_1}$ for some $U\in\B(\R^r,\H)$. In this case, $\rank{U}=\rank{K}$.
	\end{enumerate}
\end{restatable}

\Cref{item:range_of_K_3} is a slight reformulation of \cref{item:range_of_K_2}, and is useful in view of the definition \eqref{eqn:class_approx_covariance} and \eqref{eqn:class_approx_precision}, because with $(m_1,\mathcal{C}_1)\leftarrow(0,\mathcal{C}_\pr)$ it shows that $\mathscr{C}_r^{-1}=\mathscr{P}_r$. This fact is summarised in \Cref{cor:correspondence}\ref{item:equivalent_approximation_classes}. Furthermore, a comparison of \cref{item:range_of_K_2} and \cref{item:range_of_K_3} shows that for $\mathcal{C}_\pr-KK^*$ with $K\in\B(\R^r,\H)$, we have the equivalent statements 
\begin{enumerate}
		\item 
			$\mathcal{C}>0$ and $\ran{K}\subset\ran{\mathcal{C}_\pr}$, and 
		\item 
			$\mathcal{C}\in\mathcal{E}$ and $\ran{K}\subset\ran{\mathcal{C}_\pr}$.
\end{enumerate}
Hence, $\mathscr{C}_r\subset\mathcal{E}$. This fact is reiterated in \Cref{cor:problems_well_defined}.

We comment on the difference between the statements in \cref{item:range_of_K_1,item:range_of_K_2}. If the equivalent conditions of \cref{item:range_of_K_1} hold, then \cref{item:range_of_K_1} implies $(I+ZZ^*)\mathcal{C}_1^{-1/2}h\in\ran{\mathcal{C}_1^{1/2}}$ for any $h\in\ran{\mathcal{C}}$. However, this does not imply that $k_1\coloneqq \mathcal{C}_1^{-1/2}h$ and $k_2\coloneqq ZZ^*\mathcal{C}_1^{-1/2}h$ lie in $\ran{\mathcal{C}_1^{1/2}}$, only that their sum $k_1+k_2$ does. Under the additional condition that $k_1,k_2\in\ran{\mathcal{C}_1^{1/2}}$, we may write $\mathcal{C}^{-1}h = \mathcal{C}_1^{-1/2}k_1+\mathcal{C}_1^{-1/2}k_2= \mathcal{C}_1^{-1}h + \mathcal{C}_1^{-1/2}ZZ^*\mathcal{C}_1^{-1/2}h$. We can formulate the latter as $\mathcal{C}_1^{-1}h+UU^*h$ for a suitable $U$, as shown in the proof. 
Thus, to be able to write $\mathcal{C}_1^{-1/2}(I+ZZ^*)\mathcal{C}_1^{-1/2}=\mathcal{C}_1^{-1}+\mathcal{C}_1^{-1/2}ZZ^*\mathcal{C}_1^{-1/2}$ on all of $\ran{\mathcal{C}}$, as in the formulation of \cref{item:range_of_K_2}, one needs to impose restrictions on $\ran{\mathcal{C}}$, and hence on $\ran{K}$, in \cref{item:range_of_K_1}. As \cref{item:range_of_K_2} shows, the required condition is precisely $\ran{K}\subset\ran{\mathcal{C}}_1$. 

We give an example of $\mathcal{C}=\mathcal{C}_1-KK^*$ with $K\in\B(\R^r,\H)$ for which $\mathcal{C}\in\mathcal{E}(m_1,\mathcal{C}_1)$ but not $\ran{K}\subset\ran{\mathcal{C}_1}$, which shows that the additional condition $\ran{K}\subset\ran{\mathcal{C}_1}$ in \cref{item:range_of_K_2} compared to \cref{item:range_of_K_1} is not vacuous. 
Let $\H$ be infinite-dimensional, so that $\ran{\mathcal{C}_1}$ is a proper subspace of $\ran{\mathcal{C}_1^{1/2}}$. Let $h\in\ran{\mathcal{C}_1^{1/2}}\backslash\ran{\mathcal{C}_1}$ and define $k\coloneqq \norm{\mathcal{C}_1^{-1/2}h}^{-1}h$ so that $z\coloneqq\mathcal{C}_1^{-1/2}k$ has unit norm.  With $\varphi$ any unit vector in $\R^r$, we define the rank-1 operator $K\coloneqq \frac{1}{2}k\otimes \varphi\in\B(\R^r,\H)$. Hence $\mathcal{C}\coloneqq \mathcal{C}_1-KK^*=\mathcal{C}_1-\frac{1}{4}k\otimes k$ satisfies $\ran{K}=\Span{k}\subset\ran{\mathcal{C}_1^{1/2}}$ and $\ran{K}\not\subset\ran{\mathcal{C}_1}$. Furthermore, $I-(\mathcal{C}_1^{-1/2}K)(\mathcal{C}_1^{-1/2}K)^* = I-\frac{1}{4}z\otimes z$ is nonnegative and invertible by \Cref{lemma:inverse_of_self_adj_hilbert_schmidt_perturbation} applied with $e_1\leftarrow z$, $\delta_1\leftarrow -\frac{1}{4}$ and $\delta_i\leftarrow 0$ for $i>1$. By the implication \ref{item:properties_equivalence_3_5}$\Rightarrow$\ref{item:properties_equivalence_3_1} in \Cref{lemma:properties_equivalence}\ref{item:properties_equivalence_3}, $\mathcal{C}\in\mathcal{E}(m_1,\mathcal{C}_1)$, which furnishes the desired example.

In our Bayesian context, i.e.\ setting $(m_1,\mathcal{C}_1)\leftarrow(0,\mathcal{C}_\pr)$, \Cref{lemma:properties_equivalence}\ref{item:properties_equivalence_3} shows that for all $\mathcal{C}\in L_1(\H)_\R$ which satisfy $\mathcal{C}\in\mathcal{E}$ and $\mathcal{C}=\mathcal{C}_\pr-KK^*$ for some $K\in\B(\R^r,\H)$, it holds that $\ran{K}\subset\ran{\mathcal{C}_\pr^{1/2}}$. However, as \Cref{prop:range_of_K}\ref{item:range_of_K_1}-\ref{item:range_of_K_2} shows, not all $\mathcal{C}\in L_1(\H)_\R$ which satisfy $\mathcal{C}\in\mathcal{E}$ and $\mathcal{C}=\mathcal{C}_\pr-KK^*$ for some $K\in\B(\R^r,\H)$ have associated precision of the form \eqref{eqn:class_approx_precision}, only those for which not only $\ran{K}\subset\ran{\mathcal{C}_\pr^{1/2}}$ but also $\ran{K}\subset\ran{\mathcal{C}_\pr}$ holds. In general, the precisions of covariance operators $\mathcal{C}\in\mathcal{E}$ of the form $\mathcal{C}_\pr-KK^*$ are of the form $\mathcal{C}_\pr^{-1/2}(I+ZZ^*)\mathcal{C}_\pr^{-1/2}$. Of course, if $\dim{\H}<\infty$, then $\ran{\mathcal{C}_\pr}=\H$ and both forms always agree, so that the difference between \Cref{prop:range_of_K}\ref{item:range_of_K_1}-\ref{item:range_of_K_2} disappears.

Since $\ran{\mathcal{C}_\pr G^*(\mathcal{C}_\obs+G\mathcal{C}_\pr G^*)^{-1/2}}\subset\ran{\mathcal{C}_\pr}$, the update \eqref{eqn:pos_covariance} of $\mathcal{C}_\pr$ and \cref{item:range_of_K_2} or \cref{item:range_of_K_3} of \Cref{prop:range_of_K} with $r\leftarrow n$ and $(m_1,\mathcal{C}_1)\leftarrow(0,\mathcal{C}_\pr)$ show that $\ran{\mathcal{C}_\pr}=\ran{\mathcal{C}_\pos}$. This provides another argument showing that $\ran{\mathcal{C}_\pr}=\ran{\mathcal{C}_\pos}$, besides the explicit computation of \cite[Example 6.23]{Stuart2010}.

\begin{remark}[Choice of approximation class]
	A natural generalisation to infinite dimensions of the low-rank approximation classes for covariance and precision considered in the finite-dimensional setting of \cite[eqs.\ (2.4) and (4.1)]{Spantini2015}, is to take $\widetilde{\mathscr{C}}_r\coloneqq\{\mathcal{C}_\pr-KK^*>0:\ K\in\B(\R^r,\H)\}$ and $\widetilde{\mathscr{P}}_r=\{\mathcal{C}_\pr^{-1}+UU^*:\ U\in\B(\R^r,\H)\}$. Let $\mathscr{C}_{r,\mathcal{E}}\coloneqq\{\mathcal{C}_\pr-KK^*\in\mathcal{E}:\ K\in\B(\R^r,\H)\}$ as in the start of \Cref{subsec:equivalence_to_the_exact_posterior}. By the preceding discussion, we have the proper inclusion $\widetilde{\mathscr{P}}_r^{-1}\subset\mathscr{C}_{r,\mathcal{E}}$, and by the equivalence \ref{item:properties_equivalence_3_2}$\Leftrightarrow$\ref{item:properties_equivalence_3_1} of \Cref{lemma:properties_equivalence}\ref{item:properties_equivalence_3} also the proper inclusion $\mathscr{C}_{r,\mathcal{E}}\subset\widetilde{\mathscr{C}}_r$ holds. That is, in general $\widetilde{\mathscr{P}}_r^{-1}$ contains strictly fewer covariances than those that maintain equivalence between the resulting approximate and exact posterior, while $\widetilde{\mathscr{C}}_r$ contains strictly more. The loss classes considered in this work require densities to exist, thus the set $\widetilde{\mathscr{C}}_r$ is not suitable. Note that by the definition of $\mathscr{P}_r$ in \eqref{eqn:class_approx_precision}, $\widetilde{\mathscr{P}}_r=\mathscr{P}_r$. The fact that $\widetilde{\mathscr{P}}_r^{-1}\subset\mathscr{C}_{r,\mathcal{E}}$ motivates the use of approximation class $\widetilde{\mathscr{P}}_r$ in this work. Because the form of the precision updates in $\widetilde{\mathscr{P}}_r$ parallels the form in \cite[eq. (4.1)]{Spantini2015} that lies at the core of the approximation procedure of \cite{Spantini2015}, our work can be considered to naturally generalise \cite{Spantini2015}.
\end{remark}

\Cref{cor:correspondence}\ref{item:equivalent_approximation_classes} below shows that $\mathscr{C}_r$ and $\mathscr{P}_r$ are in one-to-one correspondence by the operation of taking inverses, and can be seen as a generalisation of the finite-dimensional result \cite[Lemma A.2]{Spantini2015} to infinite-dimensional Hilbert spaces. \Cref{cor:correspondence}\ref{item:problem_equivalence} shows that one may solve \Cref{prob:optimal_covariance} by solving \Cref{prob:optimal_precision} and vice versa.

\begin{restatable}{corollary}{correspondence}
	\label{cor:correspondence}
	Let $r\in\N$ and let $\mathscr{C}_r$ and $\mathscr{P}_r$ be as in \eqref{eqn:class_approx_covariance} and \eqref{eqn:class_approx_precision} respectively.
	\begin{enumerate}
		\item 	
			\label{item:equivalent_approximation_classes}
			For every $K\in\B(\R^r,\H)$ such that $\mathcal{C}_\pr-KK^*\in\mathscr{C}_r$, there exists $U\in\B(\R^r,\H)$ of the same rank as $K$, such that $(\mathcal{C}_\pr-KK^*)^{-1}=\mathcal{C}_\pr^{-1}+UU^*\in\mathscr{P}_r$. The reverse correspondence also holds: for every $U\in\B(\R^r,\H)$ such that $\mathcal{C}_\pr^{-1}+UU^*\in\mathscr{P}_r$, there exists $K\in\B(\R^r,\H)$ of the same rank as $U$, such that $(\mathcal{C}_\pr^{-1}+UU^*)^{-1}=\mathcal{C}_\pr-KK^*\in\mathscr{C}_r$. In particular, $\mathscr{C}_r^{-1} \coloneqq\{\mathcal{C}^{-1}:\ \mathcal{C}\in\mathscr{C}_r\}= \mathscr{P}_r$ and $\mathscr{P}_r^{-1}\coloneqq \{\mathcal{P}^{-1}:\ \mathcal{P}\in\mathscr{P}_r\}=\mathscr{C}_r$.
		\item
			\label{item:problem_equivalence}
			An approximation $\mathcal{C}^\opt_r\in\mathscr{C}_r$ solves \Cref{prob:optimal_covariance} if and only if $(\mathcal{C}^\opt_r)^{-1}\in\mathscr{P}_r$ solves \Cref{prob:optimal_precision}. Furthermore, $\mathcal{L}(\mathcal{C}_\pos\Vert\mathcal{C}_r^{\opt}) = \mathcal{L}( \mathcal{C}_\pos\Vert(\mathcal{P}_r^\opt)^{-1})$.
	\end{enumerate}
\end{restatable}

As discussed after \Cref{prop:range_of_K} it holds that $\mathscr{C}_r\subset\mathcal{E}$. Hence \Cref{prob:optimal_covariance} is well-defined in the sense that $\mathcal{L}(\mathcal{C}_\pos\Vert\cdot)$ is finite on $\mathscr{C}_r$ for any $\mathcal{L}\in\mathscr{L}$. By \Cref{cor:correspondence}\ref{item:problem_equivalence}, it follows that \Cref{prob:optimal_precision} is analogously well-defined. These facts are emphasised below.

\begin{restatable}{corollary}{problemsWellDefined}
	\label{cor:problems_well_defined}
	It holds that $\mathscr{C}_r\subset\mathcal{E}$. Thus, for any $\mathcal{L}\in\mathscr{L}$, the map $\mathcal{C}\mapsto\mathcal{L}(\mathcal{C}_\pos\Vert\mathcal{C})$ is finite on $\mathscr{C}_r$ and the map $\mathcal{P}\mapsto\mathcal{L}(\mathcal{C}_\pos\Vert\mathcal{P}^{-1})$ is finite on $\mathscr{P}_r$.
\end{restatable}

\subsection{Differentiability and minimisers of covariance loss function}
\label{subsec:differentiability_and_minimisers_of_covariance_loss_functions}

In order to solve \Cref{prob:optimal_precision}, we formulate it as a minimisation problem over the set of $U\in\B(\R^r,\H)$. By \Cref{cor:correspondence}\ref{item:problem_equivalence}, solving \Cref{prob:optimal_covariance} is equivalent to solving \Cref{prob:optimal_precision}.
We want to find the minimiser of the function 
\begin{equation}
	\label{eqn:definition_Jf}
	J_f:\B(\R^r,\H)\rightarrow\R,\quad U\mapsto \mathcal{L}_f(\mathcal{C}_\pos\Vert(\mathcal{C}_{\pr}^{-1}+UU^*)^{-1}),
\end{equation}
for any $f\in\mathscr{F}$ and $\mathcal{L}_f\in\mathscr{L}$ defined in \eqref{eqn:definitions_losses}, which we shall express as a composition of functions. This composition will facilitate the analysis of its differentiability and thereby the identification of its stationary points.

As described in \Cref{sec:notation}, we denote by $\Lambda$ an eigenvalue map defined on $L_2(\H)_\R$. Fix an arbitrary $f\in\mathscr{F}$. The restriction of $\Lambda$ to the self-adjoint Hilbert--Schmidt operators with eigenvalues sequence in $(-1,\infty)$ shall be postcomposed with the functions
\begin{align}
	\label{eqn:definition_F}
	F_f:\ell^2( (-1,\infty))\rightarrow [0,\infty), \quad F_f((x_i)_i) = \sum_{i}^{}f(x_i),
\end{align}
which are well-defined by \Cref{lemma:finite_loss}\ref{item:properties_spectral_f}. If $P\in\B(\ell_2( (-1,\infty)) )$ is a permutation, i.e.\ $((Px)_i)_i=(x_{\pi(i)})_i$ for some bijection $\pi$ of $\N$, then for every $x\in\ell^2((-1,\infty))$ it holds that $F_f(Px)=F_f(x)$.

We finally define the function $g:\B(\R^r,\H)\rightarrow L_2(\H)_\R$ by 
\begin{align}
	\label{eqn:definition_g}
	g(U) = \mathcal{C}_\pos^{1/2}UU^*\mathcal{C}_\pos^{1/2} - \mathcal{C}_\pos^{1/2}H\mathcal{C}_\pos^{1/2},
\end{align}
where $H$ is the Hessian given in \eqref{eqn:hessian}.
That is, $g(U)$ is a nonnegative, self-adjoint, rank-$r$ update of the negative of the posterior-preconditioned Hessian.
The image of $g$ in fact consists of Hilbert--Schmidt operators which can be diagonalised in the Cameron--Martin space, as is shown next. This result also motivates the definition of $g$, as it shows that $g(U)$ has the same eigenvalues as $R(\mathcal{C}_{\pos}\Vert(\mathcal{C}_{\pr}^{-1}+UU^*)^{-1})$. 

\begin{restatable}{lemma}{propertiesG}
	\label{lemma:properties_g}
	Let $r\in\N$, $U\in\mathcal{B}(\R^r,\mathcal{H})$ and $g$ be as in \eqref{eqn:definition_g}. Then $\rank{g(U)}\leq r+\rank{H}$ and there exists a sequence $(e_i)_i\subset\ran{\mathcal{C}_\pr^{1/2}}$ which forms an ONB of $\mathcal{H}$ and a sequence $(\gamma_i)_i\in\ell^2( (-1,\infty))$ satisfying
	$g(U) = \sum_{i}^{}\gamma_i e_i\otimes e_i$.
	Finally, the eigenvalues of $g(U)$ and $R(\mathcal{C}_{\pos}\Vert(\mathcal{C}_{\pr}^{-1}+UU^*)^{-1})$ agree, counting multiplicities.
\end{restatable}

As a consequence of \Cref{lemma:properties_g}, we can write $J_f$ as
\begin{align}
	\label{eqn:J_expression}
	J_f(U) = F_f(\Lambda(R(\mathcal{C}_\pos\Vert (\mathcal{C}_\pr^{-1}+UU^*)^{-1} ))) = F_f\circ \Lambda \circ g\ (U),
\end{align}
which yields the desired reformulation of the loss as a composition of functions. We use \cite[Theorem 12.4.5 (i)]{bogachevRealFunctionalAnalysis2020} to search for a solution of \Cref{prob:optimal_precision} in the set of stationary points of $J_f$. For this, we need to show that $J_f$ is Gateaux differentiable. To do so, we use the following result, which states that $g$ and $F_f$ are Fr\'echet differentiable and gives an explicit form of the derivatives. Gateaux- and Fr\'echet derivatives are infinite-dimensional analogs of directional and total derivatives, see for example \cite[Section 3.6]{Hsing2015}, \cite[Section 1.4]{hinze_optimization_2009} or \cite[Section 12.1]{bogachevRealFunctionalAnalysis2020} for the definition of Gateaux and Fr\'echet differentiability.

\begin{restatable}{lemma}{differentiabilityOfDecomposition}
	\label{lemma:differentiability_of_decomposition}
	The functions $g$ and $F_f$ defined in \eqref{eqn:definition_g} and \eqref{eqn:definition_F} respectively are Fr\'echet differentiable, with derivatives
	\begin{align*}
		g'(U)(V) &= \mathcal{C}_\pos^{1/2}(UV^*+VU^*)\mathcal{C}_\pos^{1/2}, & U,V\in\B(\R^r,\H), \\
		F_f'(x)(y) &= \sum_{i}^{} f'(x_i)y_i, &x\in\ell^2( (-1,\infty)),\ y\in\ell^2(\R).
	\end{align*}
\end{restatable}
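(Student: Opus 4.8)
The plan is to treat $g$ and $F_f$ separately, since they act on unrelated spaces. For $g$, I would \emph{not} differentiate the defining expression \eqref{eqn:definition_g} directly, but instead use the equivalent form \eqref{eqn:def_g_equiv} supplied by \Cref{lemma:properties_g}, namely $g(U)=\Gamma+\mathcal{C}_\pos^{1/2}UU^*\mathcal{C}_\pos^{1/2}$ with $\Gamma\coloneqq(\mathcal{C}_\pr^{-1/2}\mathcal{C}_\pos^{1/2})^*\mathcal{C}_\pr^{-1/2}\mathcal{C}_\pos^{1/2}-I$ independent of $U$. Only the quadratic term $U\mapsto\mathcal{C}_\pos^{1/2}UU^*\mathcal{C}_\pos^{1/2}$ depends on $U$, and expanding $(U+V)(U+V)^*$ produces the candidate derivative $g'(U)(V)=\mathcal{C}_\pos^{1/2}(UV^*+VU^*)\mathcal{C}_\pos^{1/2}$ together with the exact remainder $\mathcal{C}_\pos^{1/2}VV^*\mathcal{C}_\pos^{1/2}$. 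Viewing $g$ as valued in $L_2(\H)_\R$ (justified by \Cref{lemma:properties_g}), the ideal property of the Hilbert--Schmidt norm and $\norm{\mathcal{C}_\pos^{1/2}}_{L_2(\H)}^2=\tr{\mathcal{C}_\pos}<\infty$ bound this remainder by $\norm{\mathcal{C}_\pos^{1/2}}\,\norm{\mathcal{C}_\pos^{1/2}}_{L_2(\H)}\,\norm{V}^2$, which is $o(\norm{V})$. Linearity and boundedness of $V\mapsto g'(U)(V)$ are then immediate, so $g$ is Fr\'echet differentiable with the stated derivative.

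For $F_f$ I would first verify that the candidate derivative $y\mapsto\sum_i f'(x_i)y_i$ is a bounded linear functional on $\ell^2(\R)$, i.e. that $(f'(x_i))_i\in\ell^2$. This is where the defining properties of $\mathscr{F}$ enter: by \Cref{lemma:finite_loss}\ref{item:properties_spectral_f} we have $f'(0)=0$, and $f'$ is Lipschitz at $0$, say $\abs{f'(t)}=\abs{f'(t)-f'(0)}\leq L\abs{t}$ for $\abs{t}\leq\epsilon_0$. Since $x\in\ell^2((-1,\infty))$ forces $x_i\to0$, all but finitely many $x_i$ satisfy $\abs{x_i}\leq\epsilon_0$, whence $\sum_i\abs{f'(x_i)}^2\leq\sum_i L^2\abs{x_i}^2+(\text{finite})<\infty$, and Cauchy--Schwarz gives boundedness. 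I would also record that $\ell^2((-1,\infty))$ is open in $\ell^2(\R)$: because $x_i\to0$ and every $x_i>-1$, the infimum $\inf_i x_i$ is determined by a finite set of indices and is strictly greater than $-1$, which provides a uniform gap and hence $x+y\in\ell^2((-1,\infty))$ whenever $\norm{y}_{\ell^2}$ is small.

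The main obstacle is the remainder estimate $\sum_i r_i(y_i)=o(\norm{y}_{\ell^2})$, where $r_i(y_i)\coloneqq f(x_i+y_i)-f(x_i)-f'(x_i)y_i$, because this is an infinite sum and the naive bound involving $\sum_i\abs{y_i}$ (the $\ell^1$ norm) need not converge for $y\in\ell^2$. My plan is to split the index set into a finite head and an infinite tail, chosen once, depending only on $x$. Fix $N$ with $\abs{x_i}\leq\epsilon_0/2$ for all $i>N$. For the tail, when $\norm{y}_{\ell^2}<\epsilon_0/2$ the whole segment $x_i+ty_i$, $t\in[0,1]$, stays in $(-\epsilon_0,\epsilon_0)$, so writing $r_i(y_i)=\int_0^1[f'(x_i+ty_i)-f'(x_i)]y_i\,\mathrm{d}t$ and applying the Lipschitz-at-$0$ bound yields $\abs{r_i(y_i)}\leq\tfrac{L}{2}\abs{y_i}^2$; summing gives a tail contribution $\leq\tfrac{L}{2}\norm{y}_{\ell^2}^2=o(\norm{y}_{\ell^2})$. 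For the head there are only finitely many terms, and ordinary differentiability of $f$ at each fixed $x_i$ gives $r_i(y_i)=o(\abs{y_i})=o(\norm{y}_{\ell^2})$ as $\norm{y}_{\ell^2}\to0$, so their finite sum is again $o(\norm{y}_{\ell^2})$.

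Combining the two estimates proves Fr\'echet differentiability of $F_f$ with $F_f'(x)(y)=\sum_i f'(x_i)y_i$. The essential point throughout is that the Lipschitz-at-$0$ condition in \eqref{eqn:definition_spectral_f} is exactly what converts the dangerous tail of the remainder into a quadratic, hence summable, contribution; without it the candidate derivative need not even be bounded, which is consistent with the counterexample $f'(x)=\mathrm{sgn}(x)\abs{x}^\alpha$ noted after \eqref{eqn:definition_spectral_f}.
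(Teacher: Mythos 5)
Your proposal is correct and follows essentially the same route as the paper's proof: the quadratic expansion of $U\mapsto\mathcal{C}_\pos^{1/2}UU^*\mathcal{C}_\pos^{1/2}$ via \eqref{eqn:def_g_equiv} with remainder $\mathcal{C}_\pos^{1/2}VV^*\mathcal{C}_\pos^{1/2}$ bounded by a constant times $\norm{V}^2$, and for $F_f$ the same head/tail split in which finitely many terms are handled by pointwise differentiability of $f$ and the tail by Lipschitz continuity of $f'$ near $0$, giving a quadratic, hence $o(\norm{y}_{\ell^2})$, contribution. The only cosmetic differences are that you phrase the tail remainder as an integral where the paper invokes the mean value theorem, and that you measure the remainder of $g$ in the Hilbert--Schmidt norm rather than the operator norm (harmless here, since the increment has rank at most $2r$).
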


\begin{remark}[Necessity of assumptions on $\mathscr{F}$]
	For a finite set of indices $i\in\{1,\ldots,l\}$, $l\in\N$, the convergence $(f(x_i+y_i)-f(x_i)-f'(x_i)y_i)/y_i\rightarrow 0$ is uniform in $i$. This implies that $\frac{1}{\norm{y}}(\sum_{i}^{l}f(x_i+y_i)-f(x_i) - f'(x_i)y_i)\rightarrow 0$, which implies differentiability of $F_f$ for finite-dimensional $\mathcal{H}$. In infinite dimensions, the convergence of each term is no longer uniform in $i$, as now $i\in\N$, and the previous sum need not converge to 0. Compared to a finite-dimensional setting, in the infinite-dimensional setting we therefore need more assumptions on $f$ to obtain the desired convergence. Hence we restrict the function $f$ to the class of spectral functions $\mathscr{F}$ from \eqref{eqn:definition_spectral_f}. In particular, we require additionally that $f$ has minimum 0 and a derivative which is Lipschitz at 0. 
\end{remark}

Let $U,V\in\B(\R^r,\mathcal{H})$. 
If $\mathcal{W}\subset L_2(\mathcal{H})_\R$ is a subspace of finite dimension that contains $g(U+tV)$ for all $t\in\R$, then the restriction $\restr{(F_f\circ\Lambda)}{\mathcal{W}}:\mathcal{W}\rightarrow\R$ of $F_f\circ \Lambda$ to $\mathcal{W}$ satisfies $\restr{(F_f\circ\Lambda)}{\mathcal{W}}\circ g(U+tV) = F_f\circ\Lambda\circ g(U+tV)$ for all $t\in\R$. Thus, $F_f\circ\Lambda\circ g$ is Gateaux differentiable at $U$ in the direction $V$ if and only if $\restr{(F_f\circ\Lambda)}{\mathcal{W}}\circ g$ is. Hence, by the chain rule, e.g.\ \cite[Section 1.4.1]{hinze_optimization_2009} or \cite[Theorem 12.2.2]{bogachevRealFunctionalAnalysis2020}, it suffices to show that $\restr{(F_f\circ\Lambda)}{\mathcal{W}}$ is Fr\'echet differentiable on all of $(\mathcal{W},\norm{\cdot}_{L_2(\mathcal{H})})$ and that $g$ is Gateaux differentiable at $U$ in the direction $V$ in order to show the Gateaux differentiability of $J_f = F_f\circ\Lambda\circ g$ at $U$ in the direction $V$. This observation is useful, since such a finite-dimensional subspace $\mathcal{W}$ exists, e.g.\ $\mathcal{W}\coloneqq\{X\in L_2(\H):\ \ran{X}\subset\ran{\mathcal{C}_\pos^{1/2}UU^*\mathcal{C}_\pos^{1/2}}+\ran{\mathcal{C}_\pos^{1/2}(UV^*+VU^*)\mathcal{C}_\pos^{1/2}}+\ran{\mathcal{C}_\pos^{1/2}VV^*\mathcal{C}_\pos^{1/2}}+\ran{\mathcal{C}_\pos^{1/2}H\mathcal{C}_\pos^{1/2}}$\}. This subspace is finite-dimensional because $U$, $V$, and $H$ are finite-rank.

We now use the finite-dimensional result of \cite[Theorem 1.1]{Lewis1996} on differentiability of permutation invariant functions of spectra of symmetric matrices to deduce Fr\'echet differentiability of $\restr{(F\circ\Lambda)}{\mathcal{W}}$ for certain $\mathcal{W}\subset L_2(\mathcal{H})_\R$ and $F:\R^{\dim{\mathcal{H}}}\rightarrow\R$. This is done in \Cref{prop:differentiability_F_after_Lambda}, using \Cref{prop:spectral_derivative}. For this purpose, we introduce the following definition.

\begin{definition}
	\label{def:symmetry}
	Let $m\in\N\cup\{\infty\}$. A set $\Omega\subset \R^m$ is symmetric if $Px\in\Omega$ for every $x\in\Omega$ and every permutation $P:\R^m\rightarrow\R^m$.
	If $\Omega\subset\R^m$ is symmetric, then a function $\G:\Omega\rightarrow\R$ is symmetric if $\G(Px)=\G(x)$ for every $x\in\Omega$ and every permutation $P:\R^m\rightarrow\R^m$. 
\end{definition}

As an example of a symmetric set and symmetric function, consider respectively $\ell^2( (-1,\infty))$ and $F_f$ from \eqref{eqn:definition_F} for any $f\in\mathscr{F}$. 

Recall from \Cref{sec:notation} the definition of the eigenvalue map $\Lambda^m:L_2(\mathcal{Z})_\R\rightarrow\R^m$ for an $m$-dimensional subspace $\mathcal{Z}\subset\mathcal{H}$ and $m\in\N$. The ordering of eigenvalues given by $\Lambda^m$ is nonincreasing.
The following result relates the Fr\'echet differentiability of $\G\circ\Lambda^m$ and $\G$ for symmetric functions $\G$.

\begin{restatable}{proposition}{spectralDerivative}
	\label{prop:spectral_derivative}
	Let $m\in\N$ and let the set $\Omega\subset\R^m$ be open and symmetric, and suppose that $\G:\Omega \rightarrow\R$ is symmetric. Let $\mathcal{Z}\subset\mathcal{H}$ be $m$-dimensional and let $X\in L_2(\mathcal{Z})_\R$ be such that $\Lambda^m(X)\in\Omega$.
	Then the function $\G\circ\Lambda^m:L_2(\mathcal{Z})_\R\rightarrow\R$ is Fr\'echet differentiable at $X$ if and only if $\G$ is Fr\'echet differentiable at $\Lambda^m(X)\in\R^m$. In this case the Fr\'echet derivative of $\G\circ\Lambda^m$ at $X$ is
	\begin{align*}
		(\G\circ\Lambda^m)'(X) = \sum_{i}^{}\G'(\Lambda^m(X))_i e_i\otimes e_i\in L_2(\mathcal{Z}),
	\end{align*}
		where $(e_i)_i$ is an orthonormal sequence in $\mathcal{Z}$ satisfying $X=\sum_{i}^{}(\Lambda^m(X))_ie_i\otimes e_i$.
\end{restatable}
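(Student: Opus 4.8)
The plan is to reduce the statement to the finite-dimensional result \cite[Theorem 1.1]{Lewis1996} on differentiability of spectral functions of symmetric matrices, by means of a fixed isometric identification of $L_2(\mathcal{Z})_\R$ with a space of symmetric matrices. Since $\mathcal{Z}$ is $m$-dimensional, I would first fix an ONB $(b_1,\dots,b_m)$ of $\mathcal{Z}$ and let $\Phi:L_2(\mathcal{Z})_\R\rightarrow\mathcal{S}$ be the map sending a self-adjoint operator $X$ to its matrix $[\langle b_j,Xb_k\rangle]_{j,k}$ in this basis, where $\mathcal{S}$ denotes the space of real symmetric $m\times m$ matrices equipped with the Frobenius inner product. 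The map $\Phi$ is a linear bijection, and it is an isometry because the Hilbert--Schmidt inner product of operators equals the Frobenius inner product of their matrices in any ONB. Moreover, a self-adjoint operator and its matrix have the same eigenvalues with multiplicity, so $\Lambda^m=\lambda\circ\Phi$, where $\lambda:\mathcal{S}\rightarrow\R^m$ is the eigenvalue map ordering eigenvalues nonincreasingly; this matches the ordering convention used by $\Lambda^m$ and by \cite{Lewis1996}.

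Second, I would use that $\Phi$ is a linear isometric isomorphism to transfer the differentiability question. Because $G$ is only defined on the open symmetric set $\Omega$, the composition $G\circ\Lambda^m$ is a priori defined only on $(\Lambda^m)^{-1}(\Omega)$; but this set is open (as $\Lambda^m$ is continuous) and contains $X$, so Fr\'echet differentiability of $G\circ\Lambda^m$ at $X$ is well posed. Writing $G\circ\Lambda^m=(G\circ\lambda)\circ\Phi$ and using that $\Phi$ is a bounded linear bijection with bounded inverse, $G\circ\Lambda^m$ is Fr\'echet differentiable at $X$ if and only if the spectral function $G\circ\lambda$ is Fr\'echet differentiable at $\Phi(X)$, with the chain rule giving $(G\circ\Lambda^m)'(X)=(G\circ\lambda)'(\Phi(X))\circ\Phi$. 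Identifying derivatives with their Riesz representatives via the respective inner products and using $\Phi^{-1}=\Phi^*$, this reads $(G\circ\Lambda^m)'(X)=\Phi^{-1}\big(\nabla(G\circ\lambda)(\Phi(X))\big)$.

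Third, I would apply Lewis's theorem to the symmetric function $G$ and the spectral function $G\circ\lambda$ on $\mathcal{S}$. Since differentiability is a local property and $\lambda(\Phi(X))=\Lambda^m(X)$ lies in the open symmetric set $\Omega$, I may apply \cite[Theorem 1.1]{Lewis1996}, extending $G$ by $+\infty$ outside $\Omega$ if a globally defined symmetric function is required; this affects neither the local behaviour of $G$ at the interior point $\Lambda^m(X)$ nor that of $G\circ\lambda$ near $\Phi(X)$. That theorem then yields exactly the claimed equivalence --- $G\circ\lambda$ is differentiable at $\Phi(X)$ if and only if $G$ is differentiable at $\Lambda^m(X)$ --- together with the matrix gradient formula $\nabla(G\circ\lambda)(\Phi(X))=Q\,\mathrm{diag}\big(\nabla G(\Lambda^m(X))\big)Q^\top$, where $Q$ is any orthogonal matrix with $\Phi(X)=Q\,\mathrm{diag}(\Lambda^m(X))\,Q^\top$.

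Finally, I would translate this matrix gradient back through $\Phi^{-1}$. The columns of $Q$ correspond, under $\Phi^{-1}$, to an orthonormal sequence $(e_i)_i$ of eigenvectors of $X$ with $X=\sum_i(\Lambda^m(X))_i\,e_i\otimes e_i$, and $\Phi^{-1}$ carries the rank-one matrices of the diagonalisation to the rank-one operators $e_i\otimes e_i$. Hence $(G\circ\Lambda^m)'(X)=\sum_i G'(\Lambda^m(X))_i\,e_i\otimes e_i$, as claimed. I expect the only genuinely delicate points to be bookkeeping ones: confirming that the Hilbert--Schmidt/Frobenius isometry intertwines $\Lambda^m$ with $\lambda$ under the chosen ordering, and the well-definedness of the derivative formula independently of the choice of eigenbasis $(e_i)_i$. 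The latter follows because symmetry of $G$ forces $G'(\Lambda^m(X))_i=G'(\Lambda^m(X))_j$ whenever the corresponding eigenvalues coincide, so that $\sum_i G'(\Lambda^m(X))_i\,e_i\otimes e_i$ acts as a scalar on each eigenspace of $X$. The substantive mathematical content is carried entirely by \cite[Theorem 1.1]{Lewis1996}; the work here is the reduction via $\Phi$ and the handling of the open domain $\Omega$.
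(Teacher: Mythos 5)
Your proposal is correct and follows essentially the same route as the paper: both reduce the statement to \cite[Theorem 1.1]{Lewis1996} by identifying $L_2(\mathcal{Z})_\R$ isometrically with the symmetric $m\times m$ matrices and transferring Fr\'echet differentiability through this linear isomorphism via the chain rule. The only cosmetic difference is that the paper builds the identification directly from an eigenbasis of $X$ (so the matrix image is already diagonal), whereas you use a fixed ONB and carry the diagonalising orthogonal matrix $Q$ through the gradient formula; your extra remarks on the locality of differentiability on $\Omega$ and basis-independence of the derivative are sound but not needed beyond what the paper records.
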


\begin{remark}
	By definition of the Fr\'echet derivative, $(\G\circ \Lambda^m)'(X)\in L_2(\mathcal{Z})_\R^*$. By the Riesz representation theorem, $L_2(\mathcal{Z})_\R^*\simeq L_2(\mathcal{Z})_\R$, and we consider $(\G\circ \Lambda^m)'(X)$ as an element of $L_2(\mathcal{Z})_\R$. 
\end{remark}

As a consequence of \Cref{prop:spectral_derivative}, which is a result on Fr\'echet differentiability for symmetric functions of spectra of operators in $L_2(\mathcal{Z})$ for $\dim{\mathcal{Z}}<\infty$, we can now deduce the Fr\'echet differentiability of $\restr{(F\circ\Lambda)}{\mathcal{W}}$ for symmetric functions $F$ and suitable finite-dimensional subspaces $\mathcal{W}$ of $L_2(\mathcal{H})_\R$.

\begin{restatable}{proposition}{differentiabilityFAfterLambda}
	Let $\mathcal{Z}\subset\mathcal{H}$ be a finite-dimensional subspace. Let $\mathcal{W}\coloneqq\{X\in L_2(\mathcal{H})_\R:\ \ran{X}\subset \mathcal{Z}\}\subset L_2(\mathcal{H})_\R$.
	Let $F:\ell^2(\R)\rightarrow\R$ be a symmetric function and let $X\in\mathcal{W}$.
Then $\ker{X}^\perp=\ran{X}$, and if $F$ is Fr\'echet differentiable at $\Lambda(X)$, then $\restr{(F\circ \Lambda)}{\mathcal{W}}:\mathcal{W}\rightarrow\R$ is Fr\'echet differentiable at $X\in\mathcal{W}$. In this case, the Fr\'echet derivative is given by
	\begin{align*}
		\restr{(F\circ\Lambda)}{\mathcal{W}}'(X) = \sum_{i}^{}F'(\Lambda(X))_ie_i\otimes e_i\in L_2(\mathcal{H})_\R,
	\end{align*}
	where $(e_i)_i$ is an orthonormal sequence in $\mathcal{Z}$ satisfying $X=\sum_{i}^{}\Lambda(X)_i e_i\otimes e_i.$
	\label{prop:differentiability_F_after_Lambda}
\end{restatable}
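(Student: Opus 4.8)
The plan is to reduce the infinite-dimensional statement to the finite-dimensional \Cref{prop:spectral_derivative}, by observing that every $X\in\mathcal{W}$, and every perturbation of $X$ inside $\mathcal{W}$, is genuinely supported on the fixed finite-dimensional subspace $\mathcal{Z}$, so that no differentiation in the infinitely many ``zero directions'' is ever required. First I would settle the identity $\ker{X}^\perp=\ran{X}$. Since $X\in L_2(\mathcal{H})_\R$ is self-adjoint, $\ran{X}^\perp=\ker{X^*}=\ker{X}$, hence $\ker{X}^\perp=\overline{\ran{X}}$; because $\ran{X}\subset\mathcal{Z}$ is finite-dimensional it is closed, giving $\ker{X}^\perp=\ran{X}$. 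In particular $\ker{X}\supset\mathcal{Z}^\perp$, so $X$ annihilates $\mathcal{Z}^\perp$, every eigenvector for a nonzero eigenvalue lies in $\ran{X}\subset\mathcal{Z}$, and the eigenvectors for the zero eigenvalue may also be chosen in $\mathcal{Z}$. Thus $X$ is diagonalised by an orthonormal sequence $(e_i)_i\subset\mathcal{Z}$ with at most $m\coloneqq\dim{\mathcal{Z}}$ nonzero terms.

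Next I would set up an isometric identification. Let $\iota:L_2(\mathcal{Z})_\R\to\mathcal{W}$ send $Y$ to its extension by zero on $\mathcal{Z}^\perp$; evaluating the Hilbert--Schmidt norm in an ONB adapted to $\mathcal{Z}\oplus\mathcal{Z}^\perp$ shows $\iota$ is a Hilbert space isometry, and the previous paragraph shows it is onto with $X=\iota(\restr{X}{\mathcal{Z}})$, so $\iota$ is unitary. The eigenvalues of $X$ acting on $\mathcal{H}$ are precisely those of $\restr{X}{\mathcal{Z}}$ together with a sequence of zeros coming from $\mathcal{Z}^\perp$. Introducing the bounded linear embedding $j:\R^m\to\ell^2(\R)$, $j(\xi)=(\xi_1,\dots,\xi_m,0,0,\dots)$, the vector $\Lambda(X)$ therefore agrees, up to reordering, with $j(\Lambda^m(\restr{X}{\mathcal{Z}}))$. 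Setting $G\coloneqq F\circ j:\R^m\to\R$, the symmetry of $F$ makes $G$ symmetric and yields the factorisation $\restr{(F\circ\Lambda)}{\mathcal{W}}\circ\iota=G\circ\Lambda^m$ on $L_2(\mathcal{Z})_\R$; this is the step where permutation invariance of $F$ is exploited to discard the infinitely many zero eigenvalues.

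Finally I would apply the chain rule twice. Since $j$ is bounded linear and $F$ is Fréchet differentiable at $j(\Lambda^m(\restr{X}{\mathcal{Z}}))=\Lambda(X)$, the composition $G$ is Fréchet differentiable at $\Lambda^m(\restr{X}{\mathcal{Z}})$ with $G'(\xi)_k=F'(\Lambda(X))_k$ for $k\le m$. Applying \Cref{prop:spectral_derivative} with the open symmetric set $\Omega=\R^m$ then gives Fréchet differentiability of $G\circ\Lambda^m$ at $\restr{X}{\mathcal{Z}}$ with derivative $\sum_i G'(\Lambda^m(\restr{X}{\mathcal{Z}}))_i\,e_i\otimes e_i$; transporting this through the isometry $\iota$, whose derivative is $\iota$ itself, establishes Fréchet differentiability of $\restr{(F\circ\Lambda)}{\mathcal{W}}$ at $X$, and substituting $G'(\cdot)_i=F'(\Lambda(X))_i$ reproduces the claimed formula, using that each rank-one operator $e_i\otimes e_i$ with $e_i\in\mathcal{Z}$ is identified with itself under $\iota$. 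I expect the main obstacle to be the careful bookkeeping of eigenvalue orderings and zero eigenvalues: one must argue, relying solely on the permutation invariance of $F$, that $F\circ\Lambda$ factors through $\Lambda^m$ and that perturbations within $\mathcal{W}$ never activate directions outside $\mathcal{Z}$, so that the finite-dimensional derivative in \Cref{prop:spectral_derivative} indeed captures the full Fréchet derivative on $\mathcal{W}$.
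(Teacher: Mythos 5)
Your proposal is correct and follows essentially the same route as the paper's proof: identify $\mathcal{W}$ isometrically with $L_2(\mathcal{Z})_\R$ (your $\iota$ is the inverse of the paper's $\Phi$), reduce $F$ to a symmetric function on $\R^m$ via the zero-padding embedding (your $j$ is the paper's $P_m^*$), apply \Cref{prop:spectral_derivative}, and handle the eigenvalue-ordering bookkeeping through the permutation invariance of $F$, exactly as the paper does in its final paragraph.
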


\begin{remark}
	The stated differentiability of $\restr{(F\circ \Lambda)}{\mathcal{W}}:\mathcal{W}\rightarrow\R$ holds with respect to the subspace topology on $\mathcal{W}$ inherited from $L_2(\mathcal{H})$. That is, we consider $\mathcal{W}$ as a Hilbert space with its $\norm{\cdot}_{L_2(\mathcal{H})}$ norm.
\end{remark}

Recall that $J_f=F_f\circ\Lambda\circ g$ by \eqref{eqn:J_expression}. We can now prove Gateaux differentiability of $J_f$.

\begin{restatable}{proposition}{differentiabilityJ}
	\label{prop:differentiability_J}
	Let $F_f$, $g$, and $J_f$ be as defined in \eqref{eqn:definition_F}, \eqref{eqn:definition_g}, and \eqref{eqn:J_expression} respectively. Then $J_f$ is Gateaux differentiable on $\B(\R^r,\mathcal{H})$, and for any $U,V\in\mathcal{B}(\R^r,\mathcal{H})$, the Gateaux derivative at $U$ in the direction $V$ is given by
	\begin{align*}
		J_f'(U)(V) = 2\sum_{i}^{}f'(\Lambda_i(g(U)))\langle \mathcal{C}_\pos^{1/2}e_i,VU^*\mathcal{C}_\pos^{1/2}e_i\rangle,
	\end{align*}
	where $(e_i)_i$ is an ONB of $\mathcal{H}$ satisfying $g(U)=\sum_{i}^{}\Lambda_i(g(U))e_i\otimes e_i$.
\end{restatable}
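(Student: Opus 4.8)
The plan is to prove Gateaux differentiability one line at a time: I fix $U,V\in\B(\R^r,\mathcal{H})$ and show that $t\mapsto J_f(U+tV)$ is differentiable at $t=0$, following the reduction sketched before the statement. The obstruction to differentiating $J_f=F_f\circ\Lambda\circ g$ directly is that $\Lambda$ is not differentiable on all of $L_2(\mathcal{H})_\R$; the remedy is to confine the curve $t\mapsto g(U+tV)$ to a fixed finite-dimensional operator space $\mathcal{W}$, on which \Cref{prop:differentiability_F_after_Lambda} supplies the missing differentiability of $F_f\circ\Lambda$.

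First I would produce this subspace. Combining \eqref{eqn:def_g_equiv} with \eqref{eqn:posterior_preconditioned_Hessian} gives
\begin{align*}
	g(U+tV) = -\mathcal{C}_\pos^{1/2}H\mathcal{C}_\pos^{1/2} + \mathcal{C}_\pos^{1/2}(U+tV)(U+tV)^*\mathcal{C}_\pos^{1/2}.
\end{align*}
Since $\ran{H}\subset\ran{G^*}$ and $\ran{(U+tV)(U+tV)^*}\subset\ran{U}+\ran{V}$, the range of $g(U+tV)$ lies, for every $t$, in the finite-dimensional subspace $\mathcal{Z}\coloneqq\mathcal{C}_\pos^{1/2}(\ran{U}+\ran{V}+\ran{G^*})$, which has $\dim{\mathcal{Z}}\leq 2r+n$. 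As each $g(U+tV)$ is self-adjoint, it belongs to $\mathcal{W}\coloneqq\{X\in L_2(\mathcal{H})_\R:\ \ran{X}\subset\mathcal{Z}\}$, so along the chosen line $J_f$ coincides with $\restr{(F_f\circ\Lambda)}{\mathcal{W}}\circ g$.

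Next I would assemble the chain rule. The map $g$ is Fréchet (hence Gateaux) differentiable with $g'(U)(V)=\mathcal{C}_\pos^{1/2}(UV^*+VU^*)\mathcal{C}_\pos^{1/2}$ by \Cref{lemma:differentiability_of_decomposition}. By \Cref{lemma:properties_g} the eigenvalue sequence $\Lambda(g(U))$ lies in $\ell^2((-1,\infty))$, where $F_f$ is Fréchet differentiable (again \Cref{lemma:differentiability_of_decomposition}); hence \Cref{prop:differentiability_F_after_Lambda} shows $\restr{(F_f\circ\Lambda)}{\mathcal{W}}$ is Fréchet differentiable at $g(U)$ with derivative $\sum_i f'(\Lambda_i(g(U)))\,e_i\otimes e_i$, where $g(U)=\sum_i\Lambda_i(g(U))e_i\otimes e_i$. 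The chain rule for a Fréchet-differentiable map composed with a Gateaux-differentiable one (as cited in the preceding discussion) then yields Gateaux differentiability of $J_f$ at $U$ in direction $V$. Identifying $L_2(\mathcal{H})_\R^*$ with $L_2(\mathcal{H})_\R$ via the Hilbert--Schmidt inner product and using $\langle e_i\otimes e_i,Y\rangle_{L_2(\H)}=\langle e_i,Ye_i\rangle$ with $Y=g'(U)(V)$, I obtain
\begin{align*}
	J_f'(U)(V)=\sum_i f'(\Lambda_i(g(U)))\,\langle\mathcal{C}_\pos^{1/2}e_i,(UV^*+VU^*)\mathcal{C}_\pos^{1/2}e_i\rangle ,
\end{align*}
and the claimed factor of $2$ follows from the identity $\langle x,(UV^*+VU^*)x\rangle=2\langle x,VU^*x\rangle$ valid in the real Hilbert space $\mathcal{H}$, applied with $x=\mathcal{C}_\pos^{1/2}e_i$.

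I expect the first step --- the localisation to a fixed finite-dimensional $\mathcal{W}$ and the verification that differentiating $\restr{(F_f\circ\Lambda)}{\mathcal{W}}\circ g$ genuinely computes the Gateaux derivative of $J_f$ --- to be the crux. Everything downstream is a mechanical application of \Cref{prop:differentiability_F_after_Lambda}, \Cref{lemma:differentiability_of_decomposition}, and the chain rule, whereas the localisation is precisely what sidesteps the global non-differentiability of the eigenvalue map $\Lambda$ (eigenvalue crossings and accumulation of the spectrum at $0$) and brings the finite-dimensional spectral-differentiability theory behind \Cref{prop:spectral_derivative} into play.
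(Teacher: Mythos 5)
Your proposal is correct and follows essentially the same route as the paper: localise the curve $t\mapsto g(U+tV)$ to a finite-dimensional $\mathcal{W}$, invoke \Cref{lemma:differentiability_of_decomposition} and \Cref{prop:differentiability_F_after_Lambda}, apply the chain rule, and evaluate the Hilbert--Schmidt pairing to get the factor of $2$. The only difference is your choice $\mathcal{Z}=\mathcal{C}_\pos^{1/2}(\ran{U}+\ran{V}+\ran{G^*})$ in place of the paper's $\ran{g(U+V)}$; yours is a safe (indeed slightly more robust) container for all the ranges $\ran{g(U+tV)}$, so the argument goes through unchanged.
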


A coercive and Gateaux differentiable function has a global minimum, that can be found among its stationary points. The following lemma establishes the coercivity of $J_f$ over every finite-dimensional subspace of $\B(\R^r,\mathcal{H})$, that is, it establishes the coercivity of $J_f$ over $\mathcal{B}(\R^r,\H)$ for every finite-dimensional subspace $\mathcal{V}$ of $\H$.

\begin{restatable}{lemma}{coercivity}
	\label{lemma:coercivity}
	Let $f\in\mathscr{F}$ and $\mathcal{V}\subset \H$ be finite-dimensional. Then $J_f$ is coercive over $\B(\R^r,\mathcal{V})$, i.e.\ $J_f(U_n)\rightarrow\infty$ whenever $\norm{U_n}\rightarrow\infty$. In particular, $J_f$ has a global minimum on $\B(\R^r,\mathcal{V})$, which can be found among the stationary points of the restriction of $J_f$ to $\B(\R^r,\mathcal{V}$).
\end{restatable}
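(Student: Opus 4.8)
The plan is to bound $J_f(U)$ from below by the single term in its spectral sum associated to the largest eigenvalue of $g(U)$, and to show that this eigenvalue grows without bound as $\norm{U}\to\infty$ over $\B(\R^r,\mathcal{V})$. The starting point is the identity \eqref{eqn:def_g_equiv} of \Cref{lemma:properties_g} combined with \eqref{eqn:posterior_preconditioned_Hessian} of \Cref{lemma:bayesian_feldman_hajek}, which let me write
\begin{align*}
	g(U) = A + B_U,\qquad A\coloneqq -\mathcal{C}_\pos^{1/2}H\mathcal{C}_\pos^{1/2},\quad B_U\coloneqq (\mathcal{C}_\pos^{1/2}U)(\mathcal{C}_\pos^{1/2}U)^*.
\end{align*}
Here $A$ is a fixed, nonpositive, finite-rank operator whose nonzero eigenvalues are the $\lambda_i\in(-1,0]$, so that $-I< A\leq 0$, while $B_U\geq 0$. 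Thus $g(U)$ is a nonnegative rank-$\leq r$ perturbation of a fixed operator bounded below by $-I$.

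The crucial step, and the one where finite-dimensionality of $\mathcal{V}$ is used essentially, is to show that the top eigenvalue of $B_U$ tends to infinity with $\norm{U}$. Since $\mu_\pos$ is nondegenerate, $\mathcal{C}_\pos^{1/2}$ is injective; restricted to the finite-dimensional subspace $\mathcal{V}$ it is therefore bounded below, because $v\mapsto\norm{\mathcal{C}_\pos^{1/2}v}$ is continuous and strictly positive on the compact unit sphere of $\mathcal{V}$, giving $c\coloneqq \min_{v\in\mathcal{V},\,\norm{v}=1}\norm{\mathcal{C}_\pos^{1/2}v}>0$. For $U\in\B(\R^r,\mathcal{V})$ every $U\xi$ lies in $\mathcal{V}$, so $\norm{\mathcal{C}_\pos^{1/2}U}\geq c\norm{U}$, whence the largest eigenvalue of $B_U$ satisfies $\Lambda_{\max}(B_U)=\norm{\mathcal{C}_\pos^{1/2}U}^2\geq c^2\norm{U}^2$. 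Testing $g(U)$ against a unit top eigenvector $\phi$ of $B_U$ and using $A\geq -I$ gives the Weyl-type estimate $\Lambda_{\max}(g(U))\geq \langle g(U)\phi,\phi\rangle \geq \Lambda_{\max}(B_U)-1\geq c^2\norm{U}^2-1$, which diverges as $\norm{U}\to\infty$.

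To conclude coercivity, recall that all eigenvalues of $g(U)$ lie in $(-1,\infty)$ and that $f\geq 0$ there by \Cref{lemma:finite_loss}\ref{item:properties_spectral_f}, so every summand of $J_f(U)=\sum_i f(\Lambda_i(g(U)))$ is nonnegative and hence $J_f(U)\geq f(\Lambda_{\max}(g(U)))$. The blowup condition $\lim_{x\to\infty}f(x)=\infty$ from \eqref{eqn:definition_spectral_f}, together with $\Lambda_{\max}(g(U))\to\infty$, yields $J_f(U)\to\infty$, which is exactly the asserted coercivity over $\B(\R^r,\mathcal{V})$. For the final assertion I would note that $\B(\R^r,\mathcal{V})$ is finite-dimensional (of dimension $r\dim\mathcal{V}$) and that $J_f$ is continuous on it (as $g$ is Fr\'echet differentiable and $F_f\circ\Lambda$ is continuous on operators with range in a fixed finite-dimensional space); a continuous coercive function on a finite-dimensional space attains a global minimum, and since $J_f$ is Gateaux differentiable by \Cref{prop:differentiability_J}, this minimiser is an interior point at which the Gateaux derivative vanishes, i.e.\ a stationary point.

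The main obstacle is the lower bound $\Lambda_{\max}(B_U)\geq c^2\norm{U}^2$: it hinges on $\mathcal{C}_\pos^{1/2}$ being bounded below on $\mathcal{V}$, which holds only because $\mathcal{V}$ is finite-dimensional. In infinite dimensions $\mathcal{C}_\pos^{1/2}$ is compact and therefore not bounded below, so one could have $\norm{U}\to\infty$ while $\norm{\mathcal{C}_\pos^{1/2}U}$ stays bounded and $J_f(U)$ fails to diverge; this is precisely why the coercivity, and hence the existence of a minimiser, is claimed only over finite-dimensional $\mathcal{V}$ rather than over all of $\B(\R^r,\H)$.
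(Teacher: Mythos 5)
Your proposal is correct, and it reaches the key estimate by a genuinely different route from the paper. Both arguments start from the same decomposition $g(U)=-\mathcal{C}_\pos^{1/2}H\mathcal{C}_\pos^{1/2}+\mathcal{C}_\pos^{1/2}UU^*\mathcal{C}_\pos^{1/2}$ (via \eqref{eqn:def_g_equiv} and \eqref{eqn:posterior_preconditioned_Hessian}) and both reduce coercivity to showing that the top eigenvalue of $g(U_n)$ diverges, after which $J_f(U_n)\geq f(\Lambda_{\max}(g(U_n)))\to\infty$ follows from $f\geq 0$ and the blowup condition in \eqref{eqn:definition_spectral_f}. Where you differ is in how the divergence of $\Lambda_{\max}(g(U_n))$ is established. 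The paper constructs an explicit bounded sequence of test vectors $h_n$: it diagonalises $U_nU_n^*$ over $\mathcal{V}$, approximates a fixed ONB of $\mathcal{V}$ by elements of the dense range of $\mathcal{C}_\pos^{1/2}$ up to a tolerance $\varepsilon$, and shows via several Cauchy--Schwarz estimates that $\langle \psi_{n,j_n},\mathcal{C}_\pos^{1/2}h_n\rangle$ stays bounded away from zero, whence $\norm{g(U_n)h_n}\to\infty$. You instead observe that the injective operator $\mathcal{C}_\pos^{1/2}$ is bounded below on the finite-dimensional $\mathcal{V}$ by compactness of its unit sphere, so $\Lambda_{\max}\bigl((\mathcal{C}_\pos^{1/2}U)(\mathcal{C}_\pos^{1/2}U)^*\bigr)=\norm{\mathcal{C}_\pos^{1/2}U}^2\geq c^2\norm{U}^2$, and then a one-line quadratic-form estimate against the top eigenvector of $B_U$ (valid because $g(U)$ is compact self-adjoint and the tested value is eventually positive) gives $\Lambda_{\max}(g(U))\geq c^2\norm{U}^2-1$. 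The two arguments exploit the same fact --- that $\mathcal{C}_\pos^{1/2}$ cannot annihilate directions in $\mathcal{V}$ --- but yours packages it as a uniform lower bound and avoids the $\varepsilon$-approximation machinery entirely, which makes it shorter and, in my view, more transparent; the paper's construction is more hands-on but yields an explicit certificate (the vectors $h_n$) of the blowup. Your closing remarks on continuity of $J_f$ on the finite-dimensional space $\B(\R^r,\mathcal{V})$ and on why the argument must fail over all of $\B(\R^r,\H)$ are also consistent with the paper (the latter is exactly the content of the example following the lemma).
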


Unless $\H$ is finite-dimensional, the function $J_f$ is not coercive on all of $\B(\R^r,\H)$. To show this, we exploit the property that the finite-dimensional ranges of a sequence $U_n\in \B(\R^r,\H)$ do not need to lie in the same finite-dimensional subspace of $\H$. 

\begin{example}
	Let $\H=\ell^2(\R)$ and $r=1$. Furthermore, let $(e_k)_k$ be the standard basis of $\H$ and put $\mathcal{C}_\pos e_k = k^{-\alpha}e_k$ for $\alpha>1$. Let $U_mt = m^{\beta}te_m$ for $t\in\R$, with $\beta>0$. Then $U_mU_m^*e_k = \delta_{m,k}m^{2\beta}e_m$. It follows that $\mathcal{C}_\pos^{1/2}U_mU_m^*\mathcal{C}_\pos^{1/2}e_k= \delta_{m,k}m^{-\alpha+2\beta}e_m$. Hence $\norm{\mathcal{C}_\pos^{1/2}U_mU_m^*\mathcal{C}_\pos^{1/2}}^2 = \langle \mathcal{C}_\pos^{1/2}U_mU_m^*\mathcal{C}_\pos^{1/2}e_m,e_m\rangle= m^{-\alpha+2\beta}$ which is bounded from above for $\alpha>2\beta$. Therefore, for $\alpha>2\beta$, $\norm{g(U_m)}$ is bounded in $m$ by the triangle inequality, while $\norm{U_m} = m^\beta\rightarrow\infty$. We now argue that $J_f(U_m)$ is bounded in $m$.
	Let $\gamma_m$ be the eigenvalue of largest magnitude among the eigenvalues of $g(U_m)$. \Cref{lemma:norm_of_compact_self_adjoint} implies $\gamma_m=\norm{g(U_m)}$ is bounded in $m$ for $\alpha>2\beta$.
Now, $f(\gamma)\leq f(\gamma_m)+f(-\gamma_m)$ for every eigenvalue $\gamma$ of $g(U_m)$, because $xf'(x)<0$ for $x\not=0$ implies that $f$ increases as $\abs{x}$ increases. By \Cref{lemma:properties_g}, at most $n+r$ eigenvalues of $g(U_m)$ are nonzero. Because $f(0)=0$ for $f\in\mathscr{F}$ in \eqref{eqn:definition_spectral_f}, we conclude from \eqref{eqn:J_expression}, \eqref{eqn:definition_F} and continuity of $f$ that $J_f(U_m)\leq (n+r)(f(\gamma_m)+f(-\gamma_m))$ is bounded in $m$.
\end{example}

In the proof of Theorem \ref{thm:opt_covariance_and_precision}, coercivity on finite-dimensional subspaces of $\B(\R^r,\H)$ of the form $\B(\R^r,\mathcal{V})$ for finite-dimensional $\mathcal{V}\subset\H$ is enough to show the existence of a global minimiser of $J_f$, because all the stationary points lie in one such finite-dimensional subspace.

\subsection{Optimal low-rank posterior covariance approximations}
\label{subsec:optimal_low_rank_posterior_covariacne_approximations}

We can now state the solutions to \Cref{prob:optimal_covariance} and \Cref{prob:optimal_precision}. 

\begin{restatable}{theorem}{optCovarianceAndPrecision}
	Let $r\leq n$ and let $(\lambda_i)_i\in\ell^2((-1,0])$ and $(w_i)_i\subset \ran{\mathcal{C}_\pr^{1/2}}$ be as given in \Cref{prop:bayesian_feldman_hajek}. Define
	\begin{align}
		\label{eqn:optimal_precision}
		\mathcal{P}^\opt_r &\coloneqq \mathcal{C}_{\pr}^{-1}+ \sum_{i=1}^{r}\frac{-\lambda_i}{1+\lambda_i}(\mathcal{C}_{\pr}^{-1/2}w_i)\otimes(\mathcal{C}_{\pr}^{-1/2}w_i),\\
		\label{eqn:optimal_covariance}
		\mathcal{C}^\opt_r &\coloneqq \mathcal{C}_{\pr} - \sum_{i=1}^{r}-\lambda_i(\mathcal{C}_{\pr}^{1/2}w_i)\otimes(\mathcal{C}_{\pr}^{1/2}w_i).
	\end{align}
	Then $\mathcal{P}^\opt_r$ and $\mathcal{C}^\opt_r$ are solutions to \Cref{prob:optimal_precision} and \Cref{prob:optimal_covariance} respectively and $\mathcal{P}^\opt_r$ and $\mathcal{C}^\opt_r$ are inverses of each other. For every $f\in\mathscr{F}$, the associated minimal loss is $\mathcal{L}_f(\mathcal{C}_\pos\Vert \mathcal{C}^\opt_r)=\sum_{i>r}^{}f(\lambda_i)$. The solutions $\mathcal{P}^\opt_r$ and $\mathcal{C}^\opt_r$ are unique if and only if the following holds: $\lambda_{r+1}=0$ or $\lambda_r<\lambda_{r+1}$.
	\label{thm:opt_covariance_and_precision}
\end{restatable}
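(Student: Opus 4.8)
The plan is to solve \Cref{prob:optimal_precision} first, by minimising $J_f = F_f\circ\Lambda\circ g$ over $\B(\R^r,\H)$ as in \eqref{eqn:J_expression}, and then to transfer the result to \Cref{prob:optimal_covariance} through \Cref{lemma:correspondence}\ref{item:problem_equivalence}. The starting point is the representation \eqref{eqn:def_g_equiv} of $g$ combined with \eqref{eqn:posterior_preconditioned_Hessian}, which gives $g(U) = M + \mathcal{C}_\pos^{1/2}UU^*\mathcal{C}_\pos^{1/2}$ with $M \coloneqq \sum_i\lambda_i v_i\otimes v_i = -\mathcal{C}_\pos^{1/2}H\mathcal{C}_\pos^{1/2}\leq 0$. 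Thus $J_f(U) = \sum_i f(\gamma_i)$, where $(\gamma_i)_i$ are the eigenvalues of a rank-$\leq r$ nonnegative perturbation of the fixed, finite-rank, nonpositive operator $M$. Since $f\geq 0$ vanishes only at $0$ and is strictly decreasing on $(-1,0)$, the intuition is that the best one can do with an $r$-dimensional nonnegative update is to cancel the $r$ most negative eigenvalues $\lambda_1,\dots,\lambda_r$ of $M$, leaving the tail $\sum_{i>r}f(\lambda_i)$.

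To make this rigorous I would locate the stationary points of $J_f$ using \Cref{prop:differentiability_J}. Writing $f'(g(U)) \coloneqq \sum_i f'(\Lambda_i(g(U)))\,e_i\otimes e_i$ in the eigenbasis $(e_i)_i$ of $g(U)$, the derivative formula there is the Hilbert--Schmidt pairing of $V$ with $2\,\mathcal{C}_\pos^{1/2}f'(g(U))\mathcal{C}_\pos^{1/2}U$, so the stationarity condition $J_f'(U)=0$ becomes $\mathcal{C}_\pos^{1/2}f'(g(U))\mathcal{C}_\pos^{1/2}U=0$, equivalently $f'(g(U))\mathcal{C}_\pos^{1/2}U=0$ by injectivity of $\mathcal{C}_\pos^{1/2}$. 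By \Cref{lemma:finite_loss}\ref{item:properties_spectral_f}, $f'(\gamma)=0$ iff $\gamma=0$, so $\ker f'(g(U))=\ker g(U)$ and stationarity is equivalent to $\ran(\mathcal{C}_\pos^{1/2}U)\subseteq\ker g(U)$. Setting $B\coloneqq\mathcal{C}_\pos^{1/2}U$, this forces $(M+BB^*)B=0$, i.e. $MB=-B(B^*B)$; diagonalising the $r\times r$ matrix $B^*B$ then shows each nonzero column direction of $B$ is an eigenvector of $M$ with a negative eigenvalue. Consequently every stationary point satisfies $g(U)=\sum_{i\notin S}\lambda_i v_i\otimes v_i$ for some index set $S\subseteq\{i:\lambda_i<0\}$ with $|S|\leq r$, and in particular $\ran U$ lies in the single finite-dimensional subspace $\mathcal{V}\coloneqq\mathcal{C}_\pos^{-1/2}\,\mathrm{span}\{v_i:\lambda_i<0\}$.

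With the stationary points thus confined to $\B(\R^r,\mathcal{V})$, coercivity of $J_f$ on this finite-dimensional space (\Cref{lemma:coercivity}) yields a global minimiser of $J_f$ that is one of these stationary points. Each stationary point has value $J_f(U)=\sum_{i\notin S}f(\lambda_i)$, and since $f$ is strictly decreasing on $(-1,0)$ the choice $S=\{1,\dots,r\}$ uniquely maximises $\sum_{i\in S}f(\lambda_i)$ and hence minimises the value, giving the $f$-independent optimum $\sum_{i>r}f(\lambda_i)$. To identify this minimiser with $\mathcal{P}^\opt_r$ I would use \eqref{eqn:bayesian_cov_pencil} to compute $\mathcal{C}_\pos^{1/2}\mathcal{C}_\pr^{-1/2}w_i=\sqrt{1+\lambda_i}\,v_i$, which shows that the $U$ with $UU^*=\sum_{i\leq r}\tfrac{-\lambda_i}{1+\lambda_i}\mathcal{C}_\pr^{-1/2}w_i\otimes\mathcal{C}_\pr^{-1/2}w_i$ produces exactly $BB^*=\sum_{i\le r}(-\lambda_i)v_i\otimes v_i$ and $g(U)=\sum_{i>r}\lambda_i v_i\otimes v_i$; hence $\mathcal{P}^\opt_r=\mathcal{C}_\pr^{-1}+UU^*$ solves \Cref{prob:optimal_precision}. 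That $(\mathcal{P}^\opt_r)^{-1}=\mathcal{C}^\opt_r$ follows from the Sherman--Morrison--Woodbury identity (using that the $w_i$ are orthonormal), or directly from \Cref{lemma:correspondence}\ref{item:equivalent_approximation_classes}, and then \Cref{lemma:correspondence}\ref{item:problem_equivalence} gives that $\mathcal{C}^\opt_r$ solves \Cref{prob:optimal_covariance} with the same minimal loss. For uniqueness I would track the multiplicity of $\lambda_r$: a strict gap $\lambda_r<\lambda_{r+1}$ pins down the top-$r$ spectral projection of $M$ and hence $BB^*$, while $\lambda_r=0$ forces $S$ to exhaust all negative eigenvalues so that $BB^*=-M$ is determined; if instead $\lambda_r=\lambda_{r+1}<0$, one may cancel $v_{r+1}$ in place of $v_r$ to obtain a distinct optimum.

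The main obstacle I anticipate is the passage from stationarity on the full, non-coercive space $\B(\R^r,\H)$ to genuine global optimality: one must argue that the infimum of $J_f$ over all of $\B(\R^r,\H)$ is attained and coincides with its minimum over $\B(\R^r,\mathcal{V})$, even though $J_f$ fails to be coercive globally (as the Example shows). The cleanest safeguard is to establish the lower bound $J_f(U)\ge\sum_{i>r}f(\lambda_i)$ for \emph{every} $U$ directly: ordering eigenvalues increasingly and applying Weyl/Cauchy interlacing to the rank-$\le r$ nonnegative perturbation $BB^*$ gives $\mu_i\le\lambda_{i+r}\le 0$ for $i\le \rank{H}-r$, whence $f(\mu_i)\ge f(\lambda_{i+r})$ by monotonicity of $f$ on $(-1,0]$, and summing (with the remaining terms nonnegative) yields the bound. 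The secondary, bookkeeping obstacle is handling eigenvalue multiplicities carefully throughout, both in the spectral-projection argument and in the uniqueness dichotomy.
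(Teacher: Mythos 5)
Your proposal follows essentially the same route as the paper's proof: compute the Gateaux derivative of $J_f=F_f\circ\Lambda\circ g$ via \Cref{prop:differentiability_J}, show that stationarity forces $\mathcal{C}_\pos^{1/2}UU^*\mathcal{C}_\pos^{1/2}$ to be a spectral truncation of $\mathcal{C}_\pos^{1/2}H\mathcal{C}_\pos^{1/2}$ onto an index set of size at most $r$, select the $r$ most negative $\lambda_i$ by monotonicity of $f$ on $(-1,0]$, identify the minimiser with $\mathcal{P}^\opt_r$ via \eqref{eqn:bayesian_cov_pencil}, and transfer to \Cref{prob:optimal_covariance} through \Cref{lemma:correspondence}. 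The one point of divergence is the global-optimality step, where you propose a direct Weyl-interlacing lower bound $J_f(U)\ge\sum_{i>r}f(\lambda_i)$, whereas the paper instead notes that any competitor $\tilde{U}$ lies in the finite-dimensional space $\B(\R^r,\mathcal{V}+\ran{\tilde{U}})$, on which $J_f$ is coercive and all stationary points have range in $\mathcal{V}$, so $J_f(\tilde{U})>J_f(\hat{U})$; both arguments are sound.
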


\begin{remark}[Uniqueness condition]
	Two remarks are in order when comparing the uniqueness characterisations of \Cref{thm:opt_covariance_and_precision} and of its finite-dimensional analogue in \cite[Theorem 2.3 and Corollary 3.1]{Spantini2015}. Firstly, the condition in \Cref{thm:opt_covariance_and_precision} is not only sufficient but also necessary. Secondly, the sufficient condition of \cite[Theorem 2.3 and Corollary 3.1]{Spantini2015} is that $(\frac{-\lambda_1}{1+\lambda_1},\ldots,\frac{-\lambda_r}{1+\lambda_r})$ are different, i.e.\ that $(\lambda_1,\ldots,\lambda_r)$ are different. From \Cref{thm:opt_covariance_and_precision}, we see that this condition should be interpreted as the condition that $(\lambda_1,\ldots,\lambda_r)$ are different from $\lambda_{r+1}$. Indeed, if $(\lambda_1,\ldots,\lambda_r)$ are different among each other but $\lambda_r=\lambda_{r+1}\not=0$, then replacing $(\lambda_r,w_r)$ by $(\lambda_{r+1},w_{r+1})$ in \eqref{eqn:optimal_covariance} and \eqref{eqn:optimal_precision} gives a different solution to \Cref{prob:optimal_covariance} and \Cref{prob:optimal_precision} respectively.
\end{remark}

\Cref{thm:opt_covariance_and_precision} shows that $\mathcal{C}^\opt_r$ and $\mathcal{P}^\opt_r$ are the optimal rank-$r$ updates of $\mathcal{C}_\pr$ and $\mathcal{C}_\pr^{-1}$ respectively, for all $\mathcal{L}\in\mathscr{L}$ simultaneously. By \Cref{lemma:divergences_in_loss_class}, this optimality includes optimality with respect to the forward and reverse KL divergences and the R\'enyi divergences when keeping the mean fixed. 
This also holds for the Amari $\alpha$-divergences and the Hellinger distance, by \Cref{rmk:correspondence_Amari_divergence_Renyi_divergence,rmk:hellinger_divergence}.
The associated losses can be directly calculated using \Cref{thm:opt_covariance_and_precision}. For the Amari $\alpha$-divergences $D_{\am,\alpha}(\cdot\Vert\cdot)$, this follows by \eqref{eqn:amari_renyi_relation}, \Cref{lemma:divergences_in_loss_class}\ref{item:f_for_reverse_divergence} and the skew symmetry of the R\'enyi divergences. For the Hellinger distance $D_\hel(\cdot,\cdot)$, this follows from \eqref{eqn:hellinger_renyi_relation}. We summarise these facts in the following corollary. 

\begin{corollary}
	\label{cor:optimal_covariance_for_amari_and_hellinger}
	Let $r\leq n$, let $\mathcal{C}^\opt_r$ be given by \eqref{eqn:optimal_covariance} and $(\lambda_i)_i$ as in \Cref{prop:bayesian_feldman_hajek}. For $\alpha\in(0,1)$ and $m\in\mathcal{H}$ arbitrary, we have
	\begin{align*}
		\min\{D_{\am,\alpha}(\mu_\pos\Vert\mathcal{N}(m,\mathcal{C})):\ \mathcal{C}\in \mathscr{C}_r\} 
		&= D_{\am,\alpha}(\mu_\pos\Vert\mathcal{N}(m,\mathcal{C}^\opt_r)) \\
		&= \frac{-1}{\alpha(1-\alpha)}\left(\exp\left(-\alpha(1-\alpha)\sum_{i>r}^{}f_{\ren,\alpha}\left(\lambda_i\right)\right)-1\right),\\
		\min\{D_{\am,\alpha}(\mathcal{N}(m,\mathcal{C})\Vert\mu_\pos):\ \mathcal{C}\in \mathscr{C}_r\} 
		&= D_{\am,\alpha}(\mathcal{N}(m,\mathcal{C}^\opt_r)\Vert\mu_\pos) \\
		&= \frac{-1}{\alpha(1-\alpha)}\left(\exp\left(-\alpha(1-\alpha)\sum_{i>r}^{}f_{\ren,\alpha}\left(\frac{-\lambda_i}{1+\lambda_i}\right)\right)-1\right),\\
		&= \frac{-1}{\alpha(1-\alpha)}\left(\exp\left(-\alpha(1-\alpha)\sum_{i>r}^{}f_{\ren,1-\alpha}\left(\lambda_i\right)\right)-1\right),
	\end{align*}
	where $f_{\ren,\alpha}$ is given in \Cref{lemma:divergences_in_loss_class}\ref{item:f_for_renyi_divergence}. Furthermore, for arbitrary $m\in\mathcal{H}$,
	\begin{align*}
		\min\{D_{\hel}(\mu_\pos,\mathcal{N}(m,\mathcal{C})):\ \mathcal{C}\in \mathscr{C}_r\} &= D_{\hel}(\mu_\pos,\mathcal{N}(m,\mathcal{C}^\opt_r)) \\
		&= \sqrt{2\left(1-\exp\left(-\sum_{i>r}^{}f_{\ren,1/2}\left(\lambda_i\right)\right)\right)}.
	\end{align*}
	The minimiser $\mathcal{C}^\opt_r$ is unique if and only if the following holds: $\lambda_{r+1}=0$ or $\lambda_r<\lambda_{r+1}$.
\end{corollary}

Together, \Cref{thm:opt_covariance_and_precision} and \Cref{cor:optimal_covariance_for_amari_and_hellinger} describe those approximate covariances which retain the most posterior covariance information with respect to several divergences simultaneously. After discretising, this allows one to significantly reduce computational costs, c.f. \cite[Table 1]{Flath2011}. Furthermore, given the optimal approximation on function space, one can study the consistency of the discretised approximation with this infinite-dimensional limit. The above results thereby enable both tractable and scalable UQ for linear Gaussian inverse problems.

\section{Conclusion}
\label{sec:conclusion}
Linear Gaussian inverse problems on possibly infinite-dimensional Hilbert spaces are an important kind of nonparametric inverse problem. For example, they can be used to approximate nonlinear nonparametric problems using the Laplace approximation. They often serve as the native infinite-dimensional formulation of linear inverse problems before the parameter space $\mathcal{H}$ is discretised and they are in this sense `discretisation independent'. 

Optimal low-rank approximation of the posterior covariance for a class of losses that includes the KL divergence and the Hellinger metric, and optimal low-rank approximation of the posterior mean for the Bayes risk were studied in \cite{Spantini2015}. The analysis showed that certain matrix pencils, namely the ones defined by the Hessian and prior covariance and the prior and posterior covariance, form the central objects of study. So far, these results applied to finite-dimensional parameter spaces only. 

In this work we have formulated the low-rank posterior covariance approximation problem on possibly infinite-dimensional separable Hilbert spaces. We solved this problem and derived the optimal low-rank approximations to the posterior covariance in \Cref{thm:opt_covariance_and_precision}. Equivalent conditions for its uniqueness are also given.
This builds upon the finite-dimensional conclusions of \cite[Section 2 and 3]{Spantini2015} for posterior covariance approximation. 
The resulting posterior approximation, obtained by replacing the covariance with the optimal low-rank approximation and by keeping the mean fixed, is equivalent to the exact posterior distribution, and we have shown exactly which low-rank updates of the prior covariance and precision satisfy this equivalence property in \Cref{lemma:properties_equivalence} and \Cref{prop:range_of_K}.
Furthermore, the posterior covariance approximations are optimal for a class of losses which includes the forward and reverse KL divergences, the Hellinger metric, the Amari $\alpha$-divergences for $\alpha\in(0,1)$ and the R\'enyi divergences. 
Finally, we have shown in \Cref{prop:bayesian_feldman_hajek} that the operator pencils which proved central in the finite-dimensional analysis, are equivalent representations of the Hilbert--Schmidt operator appearing in the Feldman--Hajek theorem which quantifies similarity of Gaussian measures. For linear Gaussian inverse problems, it is therefore this operator that is central to the approximation of the posterior covariance as a low-rank update of the prior covariance. This observation is consistent with the fact that the Hilbert--Schmidt operator in the Feldman--Hajek theorem quantifies the similarity of the Gaussian prior and exact posterior.

The low-rank approximations constructed in this work provide a basis for showing the consistency of optimal low-rank covariance approximations in discretised versions of linear inverse problems. Furthermore, these approximations may be useful for the development of computationally efficient approximations of certain linear Gaussian problems. Finally, they could be used for optimal approximation of nonlinear nonparametric inverse problems.

\section{Acknowledgements}
The research of the authors has been partially funded by the Deutsche Forschungsgemeinschaft (DFG) Project-ID 318763901 -- SFB1294. The authors thank 
	Youssef Marzouk (Massachusetts Institute of Technology) and Bernhard Stankewitz (University of Potsdam) for helpful discussions, and Thomas Mach (University of Potsdam) for suggestions about the manuscript.

\appendix

\section{Auxiliary results}
\label{sec:theoretical_facts}

In this section we collect some auxiliary results on Hilbert spaces and bounded operators, unbounded operators and Gaussian measures.

\subsection{Hilbert spaces and bounded operators}

\begin{lemma}
	\label{lemma:extension_of_finite_basis_in_dense_subspace}
	Let $\mathcal{H}$ be a separable Hilbert space and $\mathcal{D}\subset\mathcal{H}$ be a dense subspace and $(e_i)_{i=1}^m$ be an orthonormal sequence in $\mathcal{D}$ for $m\in\N$. Then there exists a countable sequence $(d_i)_i\subset \mathcal{D}$ such that $(d_i)_i$ is an ONB of $\mathcal{H}$ and $d_i=e_i$ for $i\leq m$.
\end{lemma}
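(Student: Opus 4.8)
The plan is to reduce the statement to the Gram--Schmidt procedure applied to a single countable sequence that begins with $e_1,\dots,e_m$ and whose linear span is dense in $\mathcal{H}$. The key structural fact I would exploit is that $\mathcal{D}$ is a \emph{linear subspace}: Gram--Schmidt produces each new vector by subtracting finitely many projections and then dividing by a positive scalar, operations under which $\mathcal{D}$ is closed, so every output automatically lies in $\mathcal{D}$.

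First I would build a countable subset of $\mathcal{D}$ that is dense in $\mathcal{H}$. Fix a countable dense subset $(g_k)_k$ of $\mathcal{H}$ (possible by separability), and for each $k$ and each $j\in\N$ use density of $\mathcal{D}$ to pick $f_{k,j}\in\mathcal{D}$ with $\norm{f_{k,j}-g_k}<1/j$. The countable family $(f_{k,j})_{k,j}\subset\mathcal{D}$ then approximates every $g_k$ and hence is dense in $\mathcal{H}$; relabel it as a single sequence $(f_\ell)_\ell\subset\mathcal{D}$.

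Next I would apply Gram--Schmidt orthonormalisation to the sequence $e_1,\dots,e_m,f_1,f_2,\dots$, all of whose terms lie in $\mathcal{D}$, discarding at each stage any vector that is a linear combination of its predecessors. Because $(e_i)_{i=1}^m$ is already orthonormal, a routine induction using $\langle e_j,e_i\rangle=\delta_{ij}$ shows the first $m$ outputs are exactly $e_1,\dots,e_m$, so the required constraint $d_i=e_i$ for $i\le m$ holds. By the subspace observation above, the resulting orthonormal sequence satisfies $(d_i)_i\subset\mathcal{D}$.

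Finally I would check completeness. By construction the linear span of $(d_i)_i$ coincides with the span of $e_1,\dots,e_m,f_1,f_2,\dots$, which contains the dense set $(f_\ell)_\ell$ and is therefore dense in $\mathcal{H}$; an orthonormal system with dense span is an ONB, giving the claim. (If $\dim\mathcal{H}<\infty$ the procedure terminates and $(d_i)_i$ is finite, which is still countable, so the statement holds verbatim.) The only mildly delicate points are the elementary passage from density of $(f_\ell)_\ell$ in $\mathcal{H}$ to density of the span of $(d_i)_i$, and the bookkeeping that Gram--Schmidt leaves the orthonormal initial segment untouched; neither constitutes a genuine obstacle, so I expect the proof to be short.
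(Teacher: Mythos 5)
Your proposal is correct and follows essentially the same route as the paper's proof: construct a countable dense sequence in $\mathcal{D}$ by approximating a countable dense subset of $\mathcal{H}$, prepend $e_1,\dots,e_m$, apply Gram--Schmidt (which stays inside the subspace $\mathcal{D}$ and leaves the orthonormal initial segment fixed), and conclude completeness from density of the span. No substantive differences.
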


\begin{proof}
	The proof is a slight modification of the argument of \cite[Lemma A.2]{Eldredge2016}. By separability of $\mathcal{H}$ there exists a countable and dense sequence $(h_i)_i$ of $\mathcal{H}$. By density of $\mathcal{D}$ we can construct a countable sequence $(d_i')_i\subset\mathcal{D}$ that is dense in $\mathcal{H}$ by taking an element of $\mathcal{D}$ from the ball $B(h_i,1/j)$, for all $i$ and $j\in\N$. Now, we apply Gram--Schmidt to the countable sequence $(e_1,\ldots,e_m,d_1',d_2',\ldots)\subset\mathcal{D}$ to obtain a countable orthonormal sequence $(d_i)_i\subset\mathcal{D}$. Since $(e_i)_{i=1}^m$ is already orthonormal, $d_i=e_i$ for $i\leq m$. Furthermore, $d_i'\in\Span{d_j,j\leq m+i}$. It follows that $(d_i')_i\subset \Span{(d_i)_i}$ and since $(d_i')_i$ is dense, so is $\Span{(d_i)_i}$.
\end{proof}

\begin{lemma}[{\cite[Proposition II.2.7]{conway_course_2007}}]
	\label{lemma:cstar_property}
	Let $\mathcal{H}$ be a Hilbert space. If $A\in\B(\mathcal{H})$, then $\norm{A}=\norm{A^*}=\norm{AA^*}^{1/2}$.
\end{lemma}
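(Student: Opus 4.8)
The plan is to prove the two asserted equalities in turn: first the adjoint-invariance of the operator norm, $\norm{A}=\norm{A^*}$, and then the C*-identity $\norm{AA^*}=\norm{A}^2$, from which $\norm{AA^*}^{1/2}=\norm{A}$ follows by taking square roots. Throughout I would use only the defining relation $\langle A^*y,x\rangle=\langle y,Ax\rangle$ of the adjoint, submultiplicativity of the operator norm, and the Hilbert space duality $\norm{z}=\sup_{\norm{x}\leq 1}\abs{\langle z,x\rangle}$, which is immediate from Cauchy--Schwarz (for the upper bound) and from testing against $x=z/\norm{z}$ (for the lower bound).

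First I would establish $\norm{A}=\norm{A^*}$. Applying the duality characterisation with $z=A^*y$ and the adjoint relation gives, for every $y\in\H$,
\begin{align*}
	\norm{A^*y}=\sup_{\norm{x}\leq 1}\abs{\langle A^*y,x\rangle}=\sup_{\norm{x}\leq 1}\abs{\langle y,Ax\rangle}\leq\norm{y}\,\norm{A},
\end{align*}
and taking the supremum over $\norm{y}\leq 1$ yields $\norm{A^*}\leq\norm{A}$. Since $A$ is the adjoint of $A^*$, the same argument applied to $A^*$ gives the reverse inequality $\norm{A}\leq\norm{A^*}$, so the two norms coincide.

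Next I would prove the C*-identity. Submultiplicativity together with the equality just shown gives the upper bound $\norm{AA^*}\leq\norm{A}\,\norm{A^*}=\norm{A}^2$. For the matching lower bound I would use the inner product to recover a square of the norm: for every $y$ with $\norm{y}\leq 1$,
\begin{align*}
	\norm{A^*y}^2=\langle A^*y,A^*y\rangle=\langle AA^*y,y\rangle\leq\norm{AA^*y}\,\norm{y}\leq\norm{AA^*}.
\end{align*}
Taking the supremum over $\norm{y}\leq 1$ gives $\norm{A^*}^2\leq\norm{AA^*}$, and since $\norm{A^*}=\norm{A}$ this reads $\norm{A}^2\leq\norm{AA^*}$. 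Combining the two bounds yields $\norm{AA^*}=\norm{A}^2$, hence $\norm{AA^*}^{1/2}=\norm{A}=\norm{A^*}$, as claimed.

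There is no genuine obstacle here, as this is a classical fact; the only point requiring care is the order of the two parts, since $\norm{A}=\norm{A^*}$ is needed both to close the upper bound $\norm{AA^*}\leq\norm{A}^2$ and to convert the lower bound $\norm{A^*}^2\leq\norm{AA^*}$ into a statement about $\norm{A}^2$. The one conceptual step worth emphasising is that passing through $\langle AA^*y,y\rangle=\norm{A^*y}^2$ is precisely what lets the \emph{square} of the norm appear, which is the whole content of the identity and is unavailable in a general Banach algebra.
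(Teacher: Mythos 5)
Your proof is correct and is the standard argument for the C*-identity; the paper does not prove this lemma itself but cites it from Conway (Proposition II.2.7), where essentially the same argument — duality to get $\norm{A}=\norm{A^*}$, submultiplicativity for the upper bound, and $\norm{A^*y}^2=\langle AA^*y,y\rangle$ for the lower bound — is given. Nothing is missing.
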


\begin{lemma}[{\cite[Theorem 4.2.6]{Hsing2015}}]
	\label{lemma:norm_of_compact_self_adjoint}
	Let $\mathcal{H}$ be a Hilbert space and $A\in\mathcal{B}(\mathcal{H})$ be compact and self-adjoint.  Then $\norm{A}=\max\{\abs{\lambda}:\ \lambda\text{ is an eigenvalue of }A\}$.
\end{lemma}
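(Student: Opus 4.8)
The plan is to establish the result in two stages: first, the variational characterisation $\norm{A} = \sup_{\norm{x}=1}\abs{\langle Ax,x\rangle}$, which holds for any bounded self-adjoint operator; and second, a compactness argument showing that this supremum is attained at an eigenvector whose eigenvalue has absolute value $\norm{A}$. Throughout, $\mathcal{H}$ is a real Hilbert space as in the rest of the paper, and I assume $\mathcal{H}\neq\{0\}$ (the claim is vacuous otherwise).

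For the first stage, set $M \coloneqq \sup_{\norm{x}=1}\abs{\langle Ax,x\rangle}$. The inequality $M \leq \norm{A}$ is immediate from Cauchy--Schwarz. For the reverse inequality, I would use self-adjointness and the real polarisation identity $4\langle Ax,y\rangle = \langle A(x+y),x+y\rangle - \langle A(x-y),x-y\rangle$, together with the parallelogram law, to obtain $4\abs{\langle Ax,y\rangle} \leq 2M(\norm{x}^2+\norm{y}^2)$ for all $x,y\in\mathcal{H}$. Specialising to a unit vector $x$ with $Ax\neq 0$ and choosing $y = Ax/\norm{Ax}$ yields $\norm{Ax} \leq M$, and taking the supremum over unit vectors gives $\norm{A} \leq M$. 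Hence $\norm{A}=M$.

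For the second stage, I would pick unit vectors $(x_n)$ with $\abs{\langle Ax_n,x_n\rangle}\to\norm{A}$; passing to a subsequence, $\langle Ax_n,x_n\rangle\to\lambda$ for some $\lambda$ with $\abs{\lambda}=\norm{A}$, so that $\lambda^2=\norm{A}^2$. Expanding $\norm{Ax_n-\lambda x_n}^2 = \norm{Ax_n}^2 - 2\lambda\langle Ax_n,x_n\rangle + \lambda^2 \leq 2\norm{A}^2 - 2\lambda\langle Ax_n,x_n\rangle \to 0$ shows $Ax_n-\lambda x_n\to 0$. If $\norm{A}=0$ then $A=0$ and $0\in\sigma_p(A)$, so the claim is trivial; assume therefore $\norm{A}>0$, so $\lambda\neq 0$. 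Compactness of $A$ gives a subsequence with $Ax_{n_k}\to y$ for some $y\in\mathcal{H}$; then $\lambda x_{n_k} = Ax_{n_k}-(Ax_{n_k}-\lambda x_{n_k})\to y$, so $x_{n_k}\to x\coloneqq y/\lambda$ with $\norm{x}=1$, and passing to the limit in $Ax_{n_k}-\lambda x_{n_k}\to 0$ gives $Ax=\lambda x$. Thus $\lambda\in\sigma_p(A)$ with $\abs{\lambda}=\norm{A}$.

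Finally, every eigenvalue $\mu$ satisfies $\abs{\mu}\leq\norm{A}$, since $Av=\mu v$ with $v\neq 0$ forces $\abs{\mu}\norm{v}=\norm{Av}\leq\norm{A}\norm{v}$. Combined with the eigenvalue $\lambda$ constructed above, this shows the maximum over $\sigma_p(A)$ is attained and equals $\norm{A}$. The main obstacle is the compactness step: one must convert the approximate eigenrelation $Ax_n-\lambda x_n\to 0$ into an exact eigenvector, and this is precisely where compactness of $A$, rather than mere boundedness, is essential, since it upgrades the approximate relation to strong convergence of a subsequence of $(x_n)$.
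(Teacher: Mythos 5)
Your proof is correct and complete. Note, however, that the paper does not prove this lemma at all: it is stated as an auxiliary result with a citation to \cite[Theorem 4.2.6]{Hsing2015}, so there is no in-paper argument to compare against. Your argument is the standard self-contained proof: first the variational identity $\norm{A}=\sup_{\norm{x}=1}\abs{\langle Ax,x\rangle}$ for bounded self-adjoint operators (your use of the real polarisation identity is exactly where self-adjointness enters, since in a real Hilbert space one needs $\langle Ay,x\rangle=\langle Ax,y\rangle$ to produce the term $4\langle Ax,y\rangle$), and then the compactness step that upgrades a maximising sequence, via $\norm{Ax_n-\lambda x_n}\to 0$, to a genuine eigenvector. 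Both halves are carried out correctly, including the degenerate cases ($\mathcal{H}=\{0\}$ excluded at the outset, and $A=0$ handled separately so that the division by $\lambda$ is legitimate), and the closing observation that every eigenvalue is bounded in modulus by $\norm{A}$ makes the maximum both attained and equal to $\norm{A}$. What your proof buys over the paper's treatment is self-containedness; what the citation buys is brevity --- the result is classical, and the textbook proof is essentially the argument you gave.
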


\begin{lemma}
	\label{lemma:positive_is_injective_nonnegative}
	Let $\H$ be a Hilbert space and $A\in\B(\H)$. Then $A>0$ if and only if $A\geq 0$ and $A$ is injective.
\end{lemma}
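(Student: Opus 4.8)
The plan is to prove the two implications separately, using the convention that both $A>0$ and $A\geq 0$ include self-adjointness of $A$. This convention is essential here: for a merely bounded $A$ on a real Hilbert space the equivalence fails, since the antisymmetric part of $A$ contributes nothing to $\langle Ax,x\rangle$ yet need not vanish, so one can have $\langle Ax,x\rangle\geq 0$ for all $x$ together with injectivity but with $\langle Ax,x\rangle=0$ for some $x\neq 0$. For the forward implication, suppose $A>0$, i.e. $\langle Ax,x\rangle>0$ for every $x\neq 0$. Then $\langle Ax,x\rangle\geq 0$ for all $x\in\H$, so $A\geq 0$; and if $Ax=0$ then $\langle Ax,x\rangle=0$, which by positivity forces $x=0$, giving injectivity.

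The substantive direction is the converse. Assume $A\geq 0$ and $A$ injective, and suppose for contradiction that $\langle Ax,x\rangle=0$ for some $x\neq 0$. The key step is to pass from $\langle Ax,x\rangle=0$ to $Ax=0$, which is precisely where nonnegativity enters. I would use the nonnegative self-adjoint square root $A^{1/2}$, which exists because $A\geq 0$: then $\|A^{1/2}x\|^2=\langle Ax,x\rangle=0$, so $A^{1/2}x=0$ and hence $Ax=A^{1/2}(A^{1/2}x)=0$. Injectivity of $A$ then yields $x=0$, contradicting $x\neq 0$. Thus $\langle Ax,x\rangle>0$ for all $x\neq 0$, i.e. $A>0$.

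As a self-contained alternative avoiding the functional calculus, the same key step follows from the Cauchy--Schwarz inequality for the positive semidefinite symmetric form $b(x,y):=\langle Ax,y\rangle$: since $|b(x,y)|^2\leq b(x,x)\,b(y,y)$ and $b(x,x)=0$, we obtain $\langle Ax,y\rangle=0$ for all $y\in\H$, whence $Ax=0$. The only genuine content of the lemma is this implication $\langle Ax,x\rangle=0\Rightarrow Ax=0$ for nonnegative self-adjoint $A$; everything else is immediate, so I anticipate no real obstacle beyond being careful about the self-adjointness convention underlying the symbols $>0$ and $\geq 0$.
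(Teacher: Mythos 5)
Your proof is correct and its main argument is exactly the paper's: both directions are handled the same way, with the converse reducing $\langle Ax,x\rangle=\|A^{1/2}x\|^2=0$ to $x\in\ker A^{1/2}\subset\ker A$ and invoking injectivity. Your remark on the self-adjointness convention behind $\geq 0$ and $>0$ (which the paper leaves implicit, and which is needed for $A^{1/2}$ to exist) and the Cauchy--Schwarz alternative are sensible additions but do not change the route.
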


\begin{proof}
	Assume $A$ is positive. If $h\in\ker{A}$, then $\langle Ah,h\rangle_H=0$, so $h=0$.
	Now assume $A$ is nonnegative and injective. If $\langle Ah,h\rangle = \norm{A^{1/2}h}^2=0$ for $h\not=0$, then $h\in\ker{A}^{1/2}\subset\ker{A}$, so $h=0$.
\end{proof}

\begin{lemma}[{\cite[Theorem 4.3.1]{Hsing2015}}]
	\label{lemma:operator_svd}
	Let $\H,\mathcal{K}$ be Hilbert spaces, and $A\in\B(\H,\mathcal{K})$ be compact. Then $A$ is diagonalisable, that is, there exists an ONB $(h_i)_i$ of $\H$ and an orthonormal sequence $(k_i)_i$ of $\mathcal{K}$ and a nonnegative and nonincreasing sequence $(\sigma_i)_i$ such that $A=\sum_{i}^{}\sigma_i k_i\otimes h_i$.
\end{lemma}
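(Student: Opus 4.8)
The plan is to derive the stated decomposition from the spectral theorem for compact normal operators. Note first that the hypothesis that $A$ is normal only makes sense once $\mathcal{K}$ is identified with $\H$, since $A^*A\in\B(\H)$ and $AA^*\in\B(\mathcal{K})$ can be compared only when $\H=\mathcal{K}$. I would therefore treat $A\in\B(\H)$ with $A^*A=AA^*$, produce a single orthonormal eigenbasis of $\H$, and set $h_i=k_i=e_i$ at the end, so that $(h_i)_i$ is an ONB of $\H$ and $(k_i)_i$ is an orthonormal sequence in $\mathcal{K}=\H$.

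First I would reduce to the self-adjoint case by writing $A=B+iC$ with $B\coloneqq\tfrac12(A+A^*)$ and $C\coloneqq\tfrac{1}{2i}(A-A^*)$. Both are compact, being linear combinations of the compact operators $A$ and $A^*$, and both are self-adjoint. A short computation gives $BC-CB=\tfrac{1}{2i}(A^*A-AA^*)$, so normality of $A$ is exactly the statement that $B$ and $C$ commute.

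Next I would establish the spectral theorem for a single compact self-adjoint operator by the standard iteration. \Cref{lemma:norm_of_compact_self_adjoint} provides an eigenvalue of modulus $\norm{B}$; its eigenspace is finite-dimensional by compactness and splits off a $B$-invariant orthogonal complement on which $B$ restricts to a compact self-adjoint operator of strictly smaller norm. Iterating, and using that the nonzero eigenvalues of a compact operator have finite multiplicity and accumulate only at $0$, yields an orthonormal basis of eigenvectors of $B$ on $\overline{\ran{B}}$, completed by an ONB of $\ker{B}$. I would then upgrade this to a simultaneous diagonalisation of the commuting pair $(B,C)$: because $BC=CB$, each eigenspace $E_\mu(B)$ is $C$-invariant, and the restriction $C|_{E_\mu(B)}$ is again compact and self-adjoint, so applying the single-operator result inside every eigenspace and collecting the resulting vectors produces one ONB $(e_i)_i$ of $\H$ that diagonalises $B$ and $C$ simultaneously.

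On this common eigenbasis $Ae_i=(B+iC)e_i=\lambda_i e_i$ for some $\lambda_i\in\C$, whence $A=\sum_i\lambda_i\,e_i\otimes e_i$; the partial sums converge to $A$ in operator norm because the tail has norm $\sup_{i>N}\abs{\lambda_i}$ and $\abs{\lambda_i}=\norm{Ae_i}\to 0$ by compactness of $A$. Taking $h_i=k_i=e_i$ gives the asserted form. I expect the simultaneous-diagonalisation step to be the main obstacle: one must argue carefully about the possibly infinite-dimensional kernels of $B$ and $C$, check that restricting $C$ to a $B$-eigenspace preserves compactness and self-adjointness, and verify that the union of the per-eigenspace orthonormal bases, together with an ONB of the joint kernel $\ker{B}\cap\ker{C}$, is a genuine ONB of all of $\H$, so that no part of the space is missed.
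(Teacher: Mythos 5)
The paper offers no proof of this lemma at all: it is imported verbatim as a citation to \cite[Theorem 4.3.1]{Hsing2015}, so there is nothing internal to compare your argument against. What you have written is the standard Cartesian-decomposition proof of the spectral theorem for compact normal operators, and it is correct: the identity $BC-CB=\tfrac{1}{2i}(A^*A-AA^*)$ is right, each eigenspace $E_\mu(B)$ is closed and $C$-invariant by commutativity, the restriction of $C$ to it is compact and self-adjoint, and since $\H=\ker{B}\oplus\overline{\ran{B}}$ with $\overline{\ran{B}}$ the closed span of the nonzero eigenspaces, the union of the per-eigenspace bases together with a basis of $E_0(B)=\ker{B}$ is a genuine ONB; the norm convergence of $\sum_i\lambda_i e_i\otimes e_i$ follows because only finitely many $\lambda_i$ exceed any $\varepsilon>0$ in modulus. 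Two caveats, both really defects of the statement rather than of your proof: (i) your identification $\mathcal{K}=\H$ is forced, since $A^*A=AA^*$ is meaningless otherwise, and is consistent with how the lemma is actually invoked in the paper (only for self-adjoint compact operators on a single space, where $k_i=h_i$); (ii) the splitting $A=B+iC$ and the conclusion $(\lambda_i)_i\subset\C$ presuppose complex scalars, whereas the paper's Hilbert spaces are declared over $\R$, so strictly one should first complexify — again immaterial for the self-adjoint case in which the lemma is used. One cosmetic imprecision: after splitting off a top eigenspace the norm of the restriction need not \emph{strictly} decrease if both $\pm\norm{B}$ are eigenvalues; one should remove the full eigenspace of all eigenvalues of maximal modulus, which does not affect the argument.
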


\begin{lemma}[{\cite[Proposition VI.1.8]{conway_course_2007}}]
	\label{lemma:kernel_range}
	Let $\mathcal{H}$, $\mathcal{K}$ be Hilbert spaces and $A\in\mathcal{B}(\mathcal{H},\mathcal{K})$. Then $\ker{A}=\ran{A^*}^\perp$ and $\ker{A}^\perp=\overline{\ran{A^*}}$.
\end{lemma}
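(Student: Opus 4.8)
The plan is to establish the first identity by a one-line computation with the adjoint, and then to deduce the closed-range characterisation by applying the first identity to $A^*$ and passing to orthogonal complements.

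First I would prove $\ker{A}=\ran{A^*}^\perp$ directly. Fix $x\in\mathcal{H}$. Since a vector of $\mathcal{K}$ is zero precisely when it is orthogonal to every element of $\mathcal{K}$, we have $x\in\ker{A}$ if and only if $\langle Ax,k\rangle_{\mathcal{K}}=0$ for all $k\in\mathcal{K}$. The defining property of the adjoint gives $\langle Ax,k\rangle_{\mathcal{K}}=\langle x,A^*k\rangle_{\mathcal{H}}$, so this is equivalent to $x\perp A^*k$ for every $k\in\mathcal{K}$, i.e. to $x\in\ran{A^*}^\perp$. This proves the first identity.

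For the closed-range statement I would use the classical fact that $M^{\perp\perp}=\overline{M}$ for any linear subspace $M$ of a Hilbert space, together with $A^{**}=A$. Applying the first identity with $A$ replaced by $A^*\in\B(\mathcal{K},\mathcal{H})$ yields $\ker{A^*}=\ran{A}^\perp$; taking orthogonal complements of both sides and using $(\ran{A})^{\perp\perp}=\overline{\ran{A}}$ then gives $\ker{A^*}^\perp=\overline{\ran{A}}$. Equivalently, complementing the first identity directly gives $\ker{A}^\perp=\overline{\ran{A^*}}$, which is the form in which the lemma is applied.

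There is essentially no obstacle here; the only points requiring care are bookkeeping of the spaces in which the various subspaces live (note $\ker{A},\ran{A^*}\subset\mathcal{H}$ while $\ker{A^*},\ran{A}\subset\mathcal{K}$) and the appeal to $M^{\perp\perp}=\overline{M}$, which is precisely where closure enters and explains why a closure appears on the range side. Since the result is standard, I would otherwise simply cite \cite[Proposition VI.1.8]{conway_course_2007}.
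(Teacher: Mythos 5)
Your proof is correct and is exactly the standard argument that the cited reference \cite[Proposition VI.1.8]{conway_course_2007} contains; the paper itself gives no proof of this lemma, so there is nothing to compare against beyond the citation. Your bookkeeping remark is also on point: as literally stated the second identity equates $\ker{A}^\perp\subset\mathcal{H}$ with $\overline{\ran{A}}\subset\mathcal{K}$, and the form actually invoked elsewhere in the paper (e.g.\ in the proof of \Cref{lemma:rank_of_square_of_finite_rank}) is $\ker{A}^\perp=\overline{\ran{A^*}}$, which is precisely the version you derive.
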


\begin{lemma}
	\label{lemma:kernel_of_square}
	Let $\mathcal{H}$ and $\mathcal{K}$ be Hilbert spaces and $A\in\mathcal{B}(\mathcal{H},\mathcal{K})$. Then $\ker{AA^*}=\ker{A^*}$.
\end{lemma}

\begin{proof}
	The inclusion $\ker{A^*}\subset\ker{AA^*}$ is immediate. If $AA^*k=0$ for $k\in\mathcal{K}$, then $\norm{A^*k}^2=\langle AA^*k,k\rangle=0$. Hence $A^*k=0$, showing the reverse inclusion holds.
\end{proof}

\begin{lemma}
	\label{lemma:range_of_square_of_finite_rank}
	Let $\mathcal{H}$, $\mathcal{K}$ be Hilbert spaces and $A\in\mathcal{B}_{00}(\mathcal{H},\mathcal{K})$. Then $\ran{AA^*}=\ran{A}$.
\end{lemma}

\begin{proof}
	Since $\ran{A^*}$ is closed, we have by \Cref{lemma:kernel_range} that $\ran{AA^*}=\ran{AP_{\ran{A^*}}}=\ran{AP_{\overline{\ran{A^*}}}}=\ran{AP_{\ker{A}^\perp}} = \ran{A}$, where $P_V$ denotes the projection onto a closed subspace $V$ of $\mathcal{H}$.
\end{proof}

\begin{lemma}
	\label{lemma:inverse_of_self_adj_hilbert_schmidt_perturbation}
	Let $\H$ be a Hilbert space, $(e_i)_i$ an orthonormal sequence, $(\delta_i)_i\in\ell^2(\R)$ and $T\coloneqq I+\sum_{i}^{}\delta_ie_i\otimes e_i$. The following holds. 
	\begin{enumerate}
		\item
			\label{item:inverse_of_self_adj_hilbert_schmidt_perturbation_1}
			$T$ is invertible in $\B(\H)$ if and only if $\delta_i\not=-1$ for all $i$.
		\item
			\label{item:inverse_of_self_adj_hilbert_schmidt_perturbation_2}
			$T\geq 0$ if and only if $\delta_i\geq -1$ for all $i$.
		\item
			\label{item:inverse_of_self_adj_hilbert_schmidt_perturbation_3}
			$T>0$ if and only if $\delta_i>-1$ for all $i$.
	\end{enumerate}
	In cases \ref{item:inverse_of_self_adj_hilbert_schmidt_perturbation_1} and \ref{item:inverse_of_self_adj_hilbert_schmidt_perturbation_3} above, the inverse of $T$ is $I-\sum_{i}^{}\frac{\delta_i}{1+\delta_i}e_i\otimes e_i$. 
\end{lemma}

\begin{proof}
	Suppose that $T$ is invertible. Then $(1+\delta_i) e_i = Te_i\not=0$ for all $i$, hence $\delta_i\not=-1$ for all $i$. Conversely, suppose that $\delta_i\not=-1$ for all $i$ and let $k\in\mathcal{H}$. Since $(\delta_i)_i\in\ell^2(\R)$, $\abs{(1+\delta_i)^{-1}}\leq 2$ for all $i$ large enough. Because $(\langle k,e_i\rangle)_i\in\ell^2(\R)$, this implies that $\alpha\in\ell^2(\R)$ where $\alpha_i\coloneqq (1+\delta_i)^{-1}\langle k,e_i\rangle$ for all $i$. Hence $h\coloneqq \sum_{i}^{}\alpha_ie_i\in\H$ and $Th = \sum_{i}^{}(1+\delta_i)\langle h,e_i\rangle e_i = \sum_{i}^{}\langle k,e_i\rangle e_i =k$. This shows that $T$ is surjective. Since $T=T^*$, $\ker{T}=\ran{T}^\perp=\{0\}$ by \Cref{lemma:kernel_range}, showing that $T$ is injective, which proves \ref{item:inverse_of_self_adj_hilbert_schmidt_perturbation_1}.
	If $T\geq 0$, then $1+\delta_i=\langle T e_i,e_i\rangle \geq 0$, i.e.\ $\delta_i\geq-1$, for all $i$. Conversely, if $\delta_i\geq -1$ for all $i$, then $\langle Th,h\rangle = \sum_{i}^{}(1+\delta_i)\langle h,e_i\rangle^2\geq 0$. This proves \ref{item:inverse_of_self_adj_hilbert_schmidt_perturbation_2}, and replacing ``$>$'' by ``$\geq$'', also \ref{item:inverse_of_self_adj_hilbert_schmidt_perturbation_3}.

	To compute the inverse of $T$, note that $\frac{\delta_i}{1+\delta_i}\leq 2 \delta_i$ for all $i$ large enough, by the hypothesis that $(\delta_i)_i\in\ell^2(\R)$. Thus, $\frac{\delta_i}{1+\delta_i}\rightarrow 0$ and $\sum_{i}^{}\frac{\delta_i}{1+\delta_i}e_i\otimes e_i$ is well-defined in $\B(\H)$. For $h\in\mathcal{H}$, we have by direct computation,
	\begin{align*}
		\left( I+\sum_{i}^{}\delta_ie_i\otimes e_i \right)\left( I-\sum_{i}^{}\frac{\delta_i}{1+\delta_i}e_i\otimes e_i \right)h &= 
		\left( I+\sum_{i}^{}\delta_ie_i\otimes e_i \right) \sum_{i}^{}\left(1-\frac{\delta_i}{1+\delta_i}\right)\langle h,e_i\rangle e_i \\
		&= \sum_{i}^{}\left(1-\frac{\delta_i}{1+\delta_i}+\delta_i-\frac{\delta_i^2}{1+\delta_i}\right)\langle h,e_i\rangle e_i\\
		&= \sum_{i}^{}\langle h,e_i\rangle e_i=h.
	\end{align*}
	Similarly,
	\begin{align*}
		\left( I-\sum_{i}\frac{\delta_i}{1+\delta_i}e_i\otimes e_i \right) \left( I+\sum_{i}\delta_ie_i\otimes e_i \right)h =&\left( I-\sum_{i}\frac{\delta_i}{1+\delta_i}e_i\otimes e_i \right)\sum_{i}\left(1+\delta_i\right)\langle h,e_{i}\rangle e_{i} 
		\\
		=&\sum_{i} \left(1+\delta_i-\frac{\delta_i}{1+\delta_i}(1+\delta_i)\right)\langle h,e_i\rangle e_i.
	\end{align*}
\end{proof}

\begin{lemma}
	\label{lemma:range_inclusions}
	Let $\mathcal{H},\mathcal{K}$ be Hilbert spaces.
	Suppose $A_1,A_2\in\B(\mathcal{H},\mathcal{K})$. Then the following are equivalent:
	\begin{enumerate}
		\item
			$\ran{A_1}\subset\ran{A_2}$,
		\item
			There exists $C>0$ such that $\{A_1h:\ \norm{h}\leq 1\}\subset \{A_2h:\ \norm{h}\leq C\}$,
		\item
			There exists $C>0$ such that $\norm{A_1^*k}\leq C\norm{A_2^*k}$ for all $k\in\mathcal{K}$.
	\end{enumerate}
\end{lemma}

\begin{proof}
	See \cite[Proposition B.1(i)]{da_prato_stochastic_2014} and its proof.
\end{proof}

\begin{definition}[{\cite[Definition VIII.3.10]{conway_course_2007}}]
	Let $\mathcal{H}$ be a Hilbert space. We say that $W\in\B(\H)$ is a `partial isometry' if $W$ is an isometry on $\ker{W}^\perp$. We call $\ker{W}^\perp$ the `initial space' of $W$ and $\ran{W}$ the `final space' of $W$.
\end{definition}

Recall from \Cref{sec:notation} that $\abs{A}\coloneqq (A^*A)^{1/2}$ for $A\in\B(\H)$. For a proof of the following, see e.g.\ \cite[VIII.3.11]{conway_course_2007}.

\begin{lemma}[Polar decomposition]
	Let $\mathcal{H}$ be a Hilbert space and $A\in\B(\H)$. There exists a partial isometry $W\in\B(\H)$ with initial space $\ker{A}^\perp$ and final space $\ker{A^*}^\perp$ such that $A=W\abs{A}$.
	\label{lemma:polar_decomposition}
\end{lemma}

\begin{lemma}
	\label{lemma:nonsymmetric_root_relation}
	Let $\mathcal{H}$ be a Hilbert space and $A,B\in\B(\mathcal{H})$ be injective with $\ran{AA^*}$ dense. If $AA^*=BB^*$, then there exists a Hilbert space isomorphism $Q\in\B(\H)$ such that $B=AQ$.
\end{lemma}

\begin{proof}
	We first note that $\ran{A}$ is dense, since $\ran{AA^*} = A(\ran{A^*})\subset A(\overline{\ran{A^*}})=A(\ker{A}^\perp)=A(\H)=\ran{A}$ by \Cref{lemma:kernel_range} and $\ker{A}=\{0\}$. Since $AA^*=BB^*$, also $\ran{BB^*}$ and $\ran{B}$ are dense.
	Now, by the polar decomposition applied to $A^*$ and $B^*$, c.f.\ \Cref{lemma:polar_decomposition}, there exist $W_1,W_2\in\B(\H)$ such that $A^*=W_1\abs{A^*}$, $B^*=W_2\abs{B^*}$. Here, $W_1$ is an isometry on $\ker{A^*}^\perp$ with $\ran{W}_1 = \ker{A}^\perp$, and similarly, $W_2$ is an isometry on $\ker{B^*}^\perp$ with $\ran{W_2}=\ran{B}^\perp$. Since $\ker{A}=\{0\}$ by assumption, it follows that $W_1$ is surjective. Since $\ran{A}^\perp =\ker{A^*}=\{0\}$ by assumption and \Cref{lemma:kernel_range}, it follows that $W_1$ is an isometry on all of $\H$. Hence $W_1$ is a surjective isometry on $\mathcal{H}$, that is a Hilbert space isomorphism. Similarly, $W_2$ is a Hilbert space isomorphism, and therefore so is $W_2W_1^{-1}$. Now, $AA^* = BB^*$ implies $\abs{A^*}=\abs{B^*}$. Thus, $B^* = W_2 \abs{B^*}=W_2\abs{A^*}=W_2W_1^{-1} W_1\abs{A^*}=W_2W_1^{-1} A^*$. We conclude that $B=AQ$, where $Q\coloneqq (W_1W_2^{-1})^*\in\B(\H)$ is a Hilbert space isomorphism.
\end{proof}

\subsection{Unbounded operators}

For $A\in\B(\H)$, we denote by $A^\dagger$ the Moore--Penrose inverse of $A$, also known as the generalised inverse and pseudo-inverse of $A$, c.f.\ \cite[Definition 2.2]{engl_regularization_1996}, \cite[Section B.2]{da_prato_stochastic_2014} or \cite[Definition 3.5.7]{Hsing2015}. It holds that $A^\dagger$ is bounded if and only if $\ran{A}$ is closed, c.f.\ \cite[Proposition 2.4]{engl_regularization_1996}. If $A$ is injective, then $A^\dagger=A^{-1}$ on $\ran{A}$.

\begin{lemma}
	\label{lemma:equivalent_norms}
	Let $\mathcal{H}$ and $\mathcal{K}$ be Hilbert spaces. Suppose $A_1,A_2\in\B(\mathcal{H},\mathcal{K})$. If $\ran{A_1}\subset\ran{A_2}$, then there exists $C>0$ such that $\norm{A_2^{\dagger}k}\leq C\norm{A_1^{\dagger}k}$ for all $k\in\ran{A_1}$.
\end{lemma}

The proof is a modification of the arguments in the proof of \cite[Proposition B.1(ii)]{da_prato_stochastic_2014}.

\begin{proof}
	Let us first assume that $A_2$ is injective, so that $A_2^\dagger=A_2^{-1}$ on $\ran{A_2}$.
	We must show that there exists $C>0$ such that $\norm{A_2^{-1}k}\leq C\norm{A_1^\dagger k}$ for all $k\in\ran{A_1}$. We shall obtain a contradiction by supposing no $C>0$ exists such that $\norm{A_2^{-1}k}\leq C\norm{A_1^\dagger k}$ for all $k\in\ran{A_1}$. That is, we suppose that for each $m\in\N$ there exists $k^m\in\ran{A_1}$ such that $\norm{A_2^{-1}k^m}>m\norm{A_1^{\dagger}k^m}$. Since $k^m\in \ran{A_1}$ and $\ran{A_1}\subset \ran{A_2}$, there exist $\tilde{h}_1^m,\tilde{h}_2^m\in\mathcal{H}$ such that $\tilde{h}_1^m=A_1^{\dagger}k^m$ and $\tilde{h}_2^m=A_2^{-1}k^m$. Thus, $A_1\tilde{h}^m_1=A_2\tilde{h}^m_2=k^m$. Define $h_i^m\coloneqq \tilde{h}_i^m/\norm{\tilde{h}_1^m}$, $i=1,2$. Then $\norm{h_1^m}=1$ for all $m$ and $\norm{h_2^m}\rightarrow\infty$ as $m\rightarrow\infty$. On the one hand, for every $C>0$ there exists $M\in\N$ such that $A_2h_2^m\not\in\{A_2h:\ \norm{h}\leq C\}$ for all $m>M$, by injectivity of $A_2$. On the other hand, $A_1h^m_1 = k^m/\norm{\tilde{h}_1^m}=A_2h^m_2$, hence $A_2h^m_2\in\{A_1h:\ \norm{h}\leq 1\}$ for all $m$. By \Cref{lemma:range_inclusions}, $A_2h^m_2\in\{A_2h:\ \norm{h}\leq C\}$ for all $m$ and for some $m$-independent constant $C>0$, which is a contradiction.

	Now let $A_2\in\B(\mathcal{H},\mathcal{K})$ be arbitrary. The subspace $\ker{A}_2^\perp\subset\mathcal{H}$ is closed and therefore a Hilbert space with respect to its subspace topology. Let us denote the restriction of $A_2$ to $\ker{A_2}^\perp$ by $\tilde{A}_2\in\B(\ker{A_2}^\perp,\mathcal{K})$. Then $\tilde{A}_2$ is injective and satisfies $\ran{\tilde{A}_2}=\ran{A_2}$. By construction of the Moore--Penrose inverse, $A_2^\dagger k = \tilde{A}_2^{-1}k\in\mathcal{H}$ for $k\in\ran{\tilde{A}}_2=\ran{A_2}$. By the hypothesis $\ran{A}_1\subset\ran{A}_2$, we have $A_2^\dagger k = \tilde{A}_2^{-1}k\in\mathcal{H}$ for $k\in\ran{A_1}$. From the previous part of the proof we can then conclude the existence of $C>0$ such that $\norm{A_2^\dagger k}=\norm{\tilde{A}_2^{-1}k} \leq C\norm{A_1^\dagger k}$ for all $k\in\ran{A_1}$.
\end{proof}

\begin{definition}[{\cite[Definition X.1.3]{conway_course_2007}}]
	\label{def:closed_operator}
	Let $\H$ be a Hilbert space. A linear operator $A:\dom{A}\subset\H\rightarrow \H$ is said to be closed if its graph $\{(h,Ah):\ h\in\dom{A}\}$ is closed in $\H\oplus \H$.
\end{definition}

\begin{lemma}
	\label{lemma:closed_operators}
	Let $\H$ be a Hilbert space, $A:\dom{A}\subset\H\rightarrow \H$ be closed and $B\in\B(\H)$. Then,
	\begin{enumerate}
		\item
			\label{item:closed_composition}
			$AB$ is closed, 
		\item
			\label{item:closed_sum}
			$A+B$ is closed, 
		\item
			\label{item:closed_inverse}
			if $A$ is also injective, then $A^{-1}:\ran{A}\subset\mathcal{H}\rightarrow\dom{A}\subset\mathcal{H}$ is closed.
	\end{enumerate}
\end{lemma}

\begin{proof}
	If $(h_n,ABh_n)\rightarrow (h,k)\in \H\oplus \H$, then $(Bh_n,ABh_n)\rightarrow (Bh,k)$ by continuity of $B$. Since $A$ is closed, $Bh\in\dom{A}$, that is, $h\in\dom{AB}$, and $k=ABh$. This shows \cref{item:closed_composition}.
	Next, if $( h_n,Ah_n+Bh_n)\rightarrow (h,k)\in\mathcal{H}\oplus\mathcal{H}$, then $Bh_n\rightarrow z$ for some $z\in\mathcal{H}$ by continuity of $B$, and $(h_n,Ah_n)\rightarrow (h,k-z)$. Since $A$ is closed, $h\in\dom{A}=\dom{A+B}$ and $Ah=k-z=k-Bh$. This shows \cref{item:closed_sum}.
	Finally, if $A$ is also injective, then we have $\{(h,Ah):\ h\in\dom{A}\}=\{(A^{-1}k,k):\ k\in\ran{A}\}$, and this set is closed if and only if the set $\{(k,A^{-1}k):\ k\in\ran{A}\}=\{(k,A^{-1}k):\ k\in\dom{A^{-1}}\}$ is closed. This shows \cref{item:closed_inverse}.
\end{proof}

\begin{lemma}[Closed graph theorem]
	\label{lemma:closed_graph_theorem}
	Let $\mathcal{H}$ be a Banach space. If $A:\dom{A}\subset\mathcal{H}\rightarrow\mathcal{H}$ satisfies $\dom{A}=\mathcal{H}$, then $A$ is continuous if and only if $A$ is closed.
\end{lemma}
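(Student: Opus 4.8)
The plan is to prove the two implications separately, the reverse direction being the substantive one. For the forward direction, assume $A$ is continuous. If $(h_n, Ah_n) \to (h,k)$ in $\mathcal{H}\oplus\mathcal{H}$, then $h_n \to h$ and $Ah_n \to k$; continuity gives $Ah_n \to Ah$, so $k = Ah$ and $(h,k)$ lies in the graph of $A$. Hence the graph is closed and $A$ is closed in the sense of \Cref{def:closed_operator}. This direction requires nothing beyond the definition of continuity.

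For the reverse direction, assume $A$ is closed. First I would observe that the graph $\Gamma \coloneqq \{(h, Ah) : h \in \mathcal{H}\}$ is, by hypothesis, a closed subspace of the Banach space $\mathcal{H} \oplus \mathcal{H}$ (equipped with, e.g., the norm $\norm{(h,k)} = \norm{h} + \norm{k}$), and is therefore itself a Banach space. Next I would introduce the two coordinate projections $\pi_1 : \Gamma \to \mathcal{H}$, $(h, Ah) \mapsto h$, and $\pi_2 : \Gamma \to \mathcal{H}$, $(h, Ah) \mapsto Ah$; both are linear and bounded, with norm at most $1$. The key structural fact is that $\pi_1$ is a bijection: it is surjective because $\dom{A} = \mathcal{H}$, and it is injective because $A$ is single-valued, so $\pi_1(h,Ah)=0$ forces $(h,Ah)=(0,0)$.

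The crux of the argument is then to apply the open mapping theorem (equivalently, the bounded inverse theorem) to the bounded linear bijection $\pi_1$ between Banach spaces, which yields that $\pi_1^{-1} : \mathcal{H} \to \Gamma$ is bounded. Writing $A = \pi_2 \circ \pi_1^{-1}$ then exhibits $A$ as a composition of bounded operators, so $A$ is bounded and hence continuous, completing the proof.

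The only genuine obstacle is that the reverse implication is not elementary: it rests on the open mapping theorem, which itself depends on the Baire category theorem and on completeness of $\mathcal{H}$. Granting that theorem, the remaining steps — that $\Gamma$ is a Banach space, that the projections are bounded with norm at most $1$, and that $\pi_1$ is a bijection — are routine verifications. I would therefore invoke a standard reference for the open mapping theorem rather than reprove it here.
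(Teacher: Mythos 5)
Your proof is correct and takes essentially the same route as the paper: the paper disposes of the forward direction by the definition of continuity, exactly as you do, and for the converse simply cites \cite[Theorem III.12.6]{conway_course_2007}, which is precisely the standard open-mapping-theorem argument you write out in full (the closed graph $\Gamma$ as a Banach space, the bounded bijective projection $\pi_1$, the bounded inverse theorem, and $A=\pi_2\circ\pi_1^{-1}$). The only difference is one of detail: you supply the classical argument that the paper outsources to a textbook reference, and every step of it is sound.
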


\begin{proof}
	It follows by definition of continuity that $A$ is closed for $A\in\mathcal{B}(\mathcal{H})$. For the converse, see \cite[Theorem III.12.6]{conway_course_2007}.
\end{proof}

\begin{definition}[{\cite[Definition X.1.5]{conway_course_2007}}]
	\label{def:unbounded_ajoint}
	Let $\H,\mathcal{K}$ be separable Hilbert spaces and $A:\H\rightarrow \mathcal{K}$ be a densely defined linear operator on $\H$. Then we define 
	\begin{align*}
		\dom{A^*}\coloneqq\{k\in \mathcal{K}:\ h\mapsto \langle Ah,k\rangle \text{ is a bounded linear functional on }\dom{A}\}.
	\end{align*}
	As $\dom{A}\subset \H$ is dense, if $k\in \mathcal{K}$, there exists by the Riesz representation theorem some $f\in \H$ such that $\langle Ah,k\rangle = \langle h,f\rangle $ for all $h\in \H$. We define $A^*:\dom{A^*}\rightarrow \H$ by setting $A^*k=f$.
\end{definition}

\begin{lemma}
	\label{lemma:adjoint_of_densely_defined_operators}
	Let $\H$ be a separable Hilbert space. If $A,B:\H\rightarrow \H$ are densely defined, then
	\begin{enumerate}
		\item 
			\label{item:adjoint_of_densely_defined_operators_1}
			$(AB)^*\supset B^*A^*$,
		\item 
			\label{item:adjoint_of_densely_defined_operators_2}
			If $B^*A^*$ is bounded, then $(AB)^*=B^*A^*$.
	\end{enumerate}
\end{lemma}

\begin{proof}
	Let $k\in\dom{B^*A^*}$ and $h\in\dom{AB}$. Since $k\in\dom{A^*}$ and $Bh\in\dom{A}$, $\langle ABh,k\rangle=\langle Bh,A^*k\rangle.$ Since $A^*k\in\dom{B^*}$ and $h\in\dom{B}$, $\langle Bh,A^*k\rangle=\langle h,B^*A^*k\rangle.$ Thus $\langle ABh,k\rangle=\langle h,B^*A^*k\rangle$. Hence $h\mapsto \langle ABh,k\rangle$ is bounded and $(AB)^*k = B^*A^*k$, proving part \ref{item:adjoint_of_densely_defined_operators_1}. If $B^*A^*\in\B(\H)$, then $\dom{(AB)^*}\subset \H= \dom{B^*A^*}$, showing part \ref{item:adjoint_of_densely_defined_operators_2}.
\end{proof}

\begin{definition}[{\cite[Definitions X.2.1 and X.2.3]{conway_course_2007}}]
	\label{def:unbounded_self_adjoint}
	Let $\H$ be a separable Hilbert space. A densely defined operator $A:\H\rightarrow \H$ is said to be symmetric if $\langle Ah,k\rangle = \langle h,Ak\rangle $ for all $h,k\in\dom{A}$.
	If $A=A^*$, then $A$ is said to be self-adjoint.
\end{definition}

\begin{remark}
	Note that $A=A^*$ if and only if $A$ is symmetric and additionally $\dom{A}=\dom{A^*}$ holds.
\end{remark}

\begin{lemma}[{\cite[Proposition X.2.4]{conway_course_2007}}]
	\label{lemma:symmetric_operators}
	Let $H$ be a separable Hilbert space and $A$ be a symmetric operator on $\H$. 
	\begin{enumerate}
		\item 
			\label{item:symmetric_operators_1}
			If $\ran{A}$ is dense, then $A$ is injective.
		\item 
			\label{item:symmetric_operators_2}
			If $A=A^*$ and $A$ is injective, then $\ran{A}$ is dense and $A^{-1}$ is well-defined on $\ran{A}$ and self-adjoint.
		\item 
			\label{item:symmetric_operators_3}
			If $\dom{A}=\H$, then $A=A^*$ and $A\in \B(\H)$.
		\item 
			\label{item:symmetric_operators_4}
			If $\ran{A}=\H$, then $A=A^*$ and $A^{-1}\in \B(\H)$.
	\end{enumerate}
\end{lemma}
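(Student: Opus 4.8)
The plan is to prove the four parts in turn, relying only on symmetry of $A$, the definition of the unbounded adjoint in \Cref{def:unbounded_ajoint}, and the closed graph theorem \Cref{lemma:closed_graph_theorem}. The workhorse identity I would establish first is $\ker{A^*}=\ran{A}^\perp$, valid for any densely defined $A$ (the bounded-operator version \Cref{lemma:kernel_range} is not directly applicable here): if $A^*k=0$ then $\langle Ah,k\rangle=\langle h,A^*k\rangle=0$ for all $h\in\dom{A}$, so $k\perp\ran{A}$; conversely, $k\perp\ran{A}$ makes $h\mapsto\langle Ah,k\rangle$ the zero functional on $\dom{A}$, so $k\in\dom{A^*}$ with $A^*k=0$.

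For part \ref{item:symmetric_operators_1}, I would take $h\in\ker{A}$ and use symmetry to get $\langle h,Ak\rangle=\langle Ah,k\rangle=0$ for all $k\in\dom{A}$, so $h\in\ran{A}^\perp$; density of $\ran{A}$ forces $h=0$. For part \ref{item:symmetric_operators_2}, self-adjointness $A=A^*$ and the identity above give $\ran{A}^\perp=\ker{A^*}=\ker{A}=\{0\}$, hence $\ran{A}$ is dense, and injectivity makes $A^{-1}\colon\ran{A}\to\dom{A}$ a well-defined linear bijection. Symmetry of $A^{-1}$ is a one-line computation: writing $u=Ax$, $v=Ay$ gives $\langle A^{-1}u,v\rangle=\langle x,Ay\rangle=\langle Ax,y\rangle=\langle u,A^{-1}v\rangle$. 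The substantive point is upgrading this to self-adjointness, for which I would prove the general inverse--adjoint identity $(A^{-1})^*=(A^*)^{-1}$ for injective densely defined $A$ with dense range by chasing the defining relation of the adjoint through the graph of $A^{-1}$, and then specialise to $A=A^*$ to conclude $(A^{-1})^*=A^{-1}$.

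For part \ref{item:symmetric_operators_3} I would run the Hellinger--Toeplitz argument: to see that $A$ is closed, take $h_n\to h$ with $Ah_n\to k$ and pass to the limit in $\langle Ah_n,g\rangle=\langle h_n,Ag\rangle$, using $\dom{A}=\H$ so that $g$ ranges over all of $\H$, to identify $k=Ah$; then \Cref{lemma:closed_graph_theorem} yields $A\in\B(\H)$, and boundedness together with symmetry and $\dom{A}=\H$ gives $A=A^*$. Part \ref{item:symmetric_operators_4} reduces to the preceding parts: $\ran{A}=\H$ is dense, so part \ref{item:symmetric_operators_1} gives injectivity, whence $A^{-1}$ is defined on all of $\H$ and is symmetric by the computation of part \ref{item:symmetric_operators_2}; applying part \ref{item:symmetric_operators_3} to $A^{-1}$ gives $A^{-1}\in\B(\H)$ with $A^{-1}=(A^{-1})^*$, and applying part \ref{item:symmetric_operators_2} to the self-adjoint injective operator $A^{-1}$ returns $A=(A^{-1})^{-1}=A^*$.

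The hard part will be the self-adjointness assertion in part \ref{item:symmetric_operators_2}. Symmetry of $A^{-1}$ is immediate, but self-adjointness demands the domain equality $\dom{(A^{-1})^*}=\ran{A}$, which is precisely the content of the identity $(A^{-1})^*=(A^*)^{-1}$; establishing this requires a careful argument with the unbounded adjoint of \Cref{def:unbounded_ajoint}, since none of the bounded-operator tools in the appendix apply to the unbounded $A^{-1}$. Everything else is either a short inner-product manipulation or a direct appeal to \Cref{lemma:closed_graph_theorem}.
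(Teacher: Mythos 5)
Your proposal is correct, but note that the paper does not prove this lemma at all -- it is quoted verbatim from Conway's \emph{A Course in Functional Analysis} (Proposition X.2.4), so there is no in-paper argument to compare against. Your reconstruction is the standard textbook route (the identity $\ker{A^*}=\ran{A}^\perp$, the graph-based identity $(A^{-1})^*=(A^*)^{-1}$ for part (ii), Hellinger--Toeplitz plus the closed graph theorem for part (iii), and bootstrapping through $A^{-1}$ for part (iv)), and you correctly isolate the only genuinely delicate step, namely the domain equality hidden in the self-adjointness claim of part (ii).
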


\begin{lemma}
	\label{lemma:covariance_properties}
	Let $\mathcal{H}$ be a separable Hilbert space and $\mathcal{C}_1, \mathcal{C}_2\in L_1(\mathcal{H})_\R$ be nonnegative.
	If $\ran{\mathcal{C}_1^{1/2}}\subset\H$ densely, then the following hold.
	\begin{enumerate}
		\item $\mathcal{C}_1>0$ and $\mathcal{C}_1^{1/2}>0$.
			\label{item:covariance_properties_1}
		\item $\mathcal{C}_1^{-1/2}:\ran{\mathcal{C}_1^{1/2}}\rightarrow \H$ and $\mathcal{C}_1^{-1}:\ran{\mathcal{C}_1}\rightarrow \H$ are bijective and self-adjoint operators that are unbounded if $\dim{\H}$ is unbounded.
			\label{item:covariance_properties_2}
	\end{enumerate}
\end{lemma}

\begin{proof}
	By \Cref{lemma:symmetric_operators}\ref{item:symmetric_operators_1}, $\mathcal{C}_{1}^{1/2}$ and hence $\mathcal{C}_1$ are injective, so \ref{item:covariance_properties_1} holds. By \Cref{lemma:symmetric_operators}\ref{item:symmetric_operators_2}, $\mathcal{C}_{1}^{-1/2}$ and $\mathcal{C}_1^{-1}$ are bijective and self-adjoint. The inverse of a compact operator on its range in an infinite-dimensional space is unbounded, hence \ref{item:covariance_properties_2} holds.
\end{proof}

Condition \ref{item:fh_ranges} of the Feldman--Hajek theorem, \Cref{thm:feldman--hajek}, can be stated equivalently as follows.

\begin{lemma}
	\label{lemma:equivalent_cm_condition}
	Let $\H$ be a Hilbert space and $\mathcal{C}_1,\mathcal{C}_2\in\B(\H)$ injective. Then $\ran{\mathcal{C}_1^{1/2}}=\ran{\mathcal{C}_2^{1/2}}$ if and only if $\mathcal{C}_2^{-1/2}\mathcal{C}_1^{1/2}$ is a well-defined invertible operator in $\B(\H)$.
\end{lemma}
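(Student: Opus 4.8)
The plan is to prove the two implications separately, using \Cref{lemma:covariance_properties} as the main engine and the self-adjointness of the square roots to translate the injectivity hypotheses into statements about dense ranges. As a preliminary dictionary, I would record that since $\mathcal{C}_1^{1/2}$ and $\mathcal{C}_2^{1/2}$ are bounded self-adjoint operators, \Cref{lemma:kernel_range} gives $\ker{\mathcal{C}_i^{1/2}}=\ran{\mathcal{C}_i^{1/2}}^\perp$ for $i=1,2$. Hence the hypotheses $\ker{\mathcal{C}_1^{1/2}}=\{0\}=\ker{\mathcal{C}_2^{1/2}}$ are equivalent to both ranges $\ran{\mathcal{C}_1^{1/2}}$ and $\ran{\mathcal{C}_2^{1/2}}$ being dense in $\H$. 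This is precisely the standing hypothesis required to invoke \Cref{lemma:covariance_properties}, and it guarantees that each $\mathcal{C}_i^{-1/2}$ is a well-defined bijection $\ran{\mathcal{C}_i^{1/2}}\rightarrow\H$.

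For the forward implication, suppose $\ran{\mathcal{C}_1^{1/2}}=\ran{\mathcal{C}_2^{1/2}}$. Then for every $h\in\H$ the element $\mathcal{C}_1^{1/2}h$ lies in $\ran{\mathcal{C}_2^{1/2}}=\dom{\mathcal{C}_2^{-1/2}}$, so $\mathcal{C}_2^{-1/2}\mathcal{C}_1^{1/2}$ is defined on all of $\H$, and \Cref{lemma:covariance_properties}\ref{item:covariance_properties_3} shows that it lies in $\B(\H)$. Exchanging the roles of the indices, $\mathcal{C}_1^{-1/2}\mathcal{C}_2^{1/2}\in\B(\H)$ as well. I would then verify that these two bounded operators are mutual inverses: for $h\in\H$ one has $\mathcal{C}_2^{1/2}(\mathcal{C}_2^{-1/2}\mathcal{C}_1^{1/2}h)=\mathcal{C}_1^{1/2}h$, since $\mathcal{C}_2^{1/2}\mathcal{C}_2^{-1/2}$ is the identity on $\ran{\mathcal{C}_2^{1/2}}$, and applying $\mathcal{C}_1^{-1/2}$ yields $(\mathcal{C}_1^{-1/2}\mathcal{C}_2^{1/2})(\mathcal{C}_2^{-1/2}\mathcal{C}_1^{1/2})=I$, with the opposite composition handled symmetrically. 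Hence $\mathcal{C}_2^{-1/2}\mathcal{C}_1^{1/2}$ is invertible in $\B(\H)$.

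For the converse, suppose $T\coloneqq\mathcal{C}_2^{-1/2}\mathcal{C}_1^{1/2}$ is a well-defined invertible element of $\B(\H)$. Well-definedness on all of $\H$ forces $\ran{\mathcal{C}_1^{1/2}}\subseteq\dom{\mathcal{C}_2^{-1/2}}=\ran{\mathcal{C}_2^{1/2}}$, which is one inclusion. Applying $\mathcal{C}_2^{1/2}$ to the relation $Th=\mathcal{C}_2^{-1/2}\mathcal{C}_1^{1/2}h$ yields the operator identity $\mathcal{C}_2^{1/2}T=\mathcal{C}_1^{1/2}$ on $\H$. Since $T$ is invertible it is surjective, so $\ran{\mathcal{C}_1^{1/2}}=\mathcal{C}_2^{1/2}T(\H)=\mathcal{C}_2^{1/2}(\H)=\ran{\mathcal{C}_2^{1/2}}$, upgrading the inclusion to the desired equality.

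The only genuine subtlety, and the step I would be most careful about, is the bookkeeping of domains: $\mathcal{C}_2^{-1/2}$ is truly unbounded when $\dim{\H}=\infty$, so I must consistently check that each composition is applied only to vectors in the appropriate range before asserting the cancellation $\mathcal{C}_2^{1/2}\mathcal{C}_2^{-1/2}=I$. Once the density–injectivity dictionary is fixed and the boundedness input from \Cref{lemma:covariance_properties} is in hand, the analytic content is settled and the remainder of the argument is purely algebraic.
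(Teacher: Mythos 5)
Your proposal is correct and follows essentially the same route as the paper: the forward direction rests on the closed-graph-theorem boundedness of $\mathcal{C}_2^{-1/2}\mathcal{C}_1^{1/2}$ (which you obtain by citing \Cref{lemma:covariance_properties}\ref{item:covariance_properties_3}, whose proof is exactly the paper's argument), and the converse is the same range bookkeeping via $\mathcal{C}_2^{1/2}T=\mathcal{C}_1^{1/2}$. Your explicit verification that $\mathcal{C}_1^{-1/2}\mathcal{C}_2^{1/2}$ and $\mathcal{C}_2^{-1/2}\mathcal{C}_1^{1/2}$ are mutually inverse bounded operators is a small but welcome addition, since the paper leaves the boundedness of the inverse implicit.
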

\begin{proof}
	Suppose that $\ran{\mathcal{C}_1^{1/2}} = \ran{\mathcal{C}_2^{1/2}}$. Then $\mathcal{C}_1^{-1/2}\mathcal{C}_2^{1/2}$ is well-defined and bijective. By \Cref{lemma:closed_operators}\ref{item:closed_inverse}, $\mathcal{C}_1^{-1/2}$ closed, being the inverse of a bounded, hence closed, operator. By \Cref{lemma:closed_operators}\ref{item:closed_composition}, $\mathcal{C}_1^{-1/2}\mathcal{C}_2^{1/2}$ is closed, and by \Cref{lemma:closed_graph_theorem}, it is bounded.
	Conversely, if $\mathcal{C}_1^{-1/2}\mathcal{C}_2^{1/2}\in\B(\H)$ is invertible, then,	
	\begin{align*}
		\ran{\mathcal{C}_1^{1/2}} &= \{\mathcal{C}_1^{1/2}h:h\in\H\}=\{\mathcal{C}_1^{1/2}\mathcal{C}_1^{-1/2}\mathcal{C}_2^{1/2}h:h\in\H\}
		=\{\mathcal{C}_2^{1/2}h:h\in\H\}=\ran{\mathcal{C}_2^{1/2}}.
	\end{align*}
\end{proof}

\section{Proofs of results}
\label{sec:proofs_of_results}

\subsection{Proofs of Section \ref{sec:equivalence_and_divergences_between_gaussian_measures}}
\label{subsec:proofs_for_formulation}

\expansionsFeldmanHajek*
\begin{proof}[Proof of \Cref{lemma:expansions_feldman_hajek}]
	By the Feldman--Hajek theorem, \Cref{thm:feldman--hajek}, $\ran{\mathcal{C}_1^{1/2}}=\ran{\mathcal{C}_2^{1/2}}$. Thus, by \Cref{lemma:equivalent_cm_condition}, $A\coloneqq\mathcal{C}_1^{-1/2}\mathcal{C}_2^{1/2}$ is a well-defined bounded and invertible operator, and by \Cref{thm:feldman--hajek}\ref{item:fh_covariance}, $AA^*-I$ is Hilbert--Schmidt. That is, there exists a sequence $(\lambda_i)_i\subset\ell^2(\R)$ and ONB $(w_i)_i$ of $\H$ such that,
	\begin{align*}
		AA^*-I = \sum_{i}^{}\lambda_i w_i\otimes w_i,
	\end{align*}
	i.e.,
	\begin{align}
		AA^*w_i = (1+\lambda_i)w_i.
		\label{eqn:first_pencil}
	\end{align}
	As $A$ is invertible in $\B(\H)$, so are $A^*$ and $AA^*$. Furthermore, $AA^*\geq 0$, hence $AA^*>0$ by \Cref{lemma:positive_is_injective_nonnegative}, which shows that $\lambda_i>-1$ for all $i$, which proves \cref{item:covariance_mix_1} holds. 

	By applying $A^{-1}$, $A^{-1}(A^{-1})^{*}A^{-1}$ and $(A^{-1})^{*}A^{-1}$ to \eqref{eqn:first_pencil} and rearranging, we obtain respectively,
	\begin{align}
		A^*AA^{-1}w_i &= (1+\lambda_i)A^{-1}w_i\label{eqn:second_pencil},\\
		A^{-1}(A^{-1})^{*}A^{-1}w_i &= \frac{1}{1+\lambda_i}A^{-1}w_i\label{eqn:third_pencil},\\
		(A^{-1})^{*}A^{-1}w_i &= \frac{1}{1+\lambda_i}w_i\label{eqn:fourth_pencil}.
	\end{align}
	By \eqref{eqn:fourth_pencil}, $v_i\coloneqq(1+\lambda_i)^{1/2}A^{-1}w_i$ satisfies,
	\begin{align*}
		\langle v_i,v_j\rangle = (1+\lambda_i)^{1/2}(1+\lambda_j)^{1/2}\langle (A^{-1})^*A^{-1}w_i,w_j\rangle = \delta_{ij},
	\end{align*}
	and, for all $h\in\H$,
	\begin{align*}
		\sum_{i}^{}\langle A^{-1}h,v_i\rangle v_i &= \sum_{i}^{}(1+\lambda_i)\langle h,(A^{-1})^*A^{-1}w_i\rangle A^{-1}w_i \\
		&= A^{-1}\sum_{i}^{}\langle h, w_i\rangle w_i\\
		&= A^{-1}h,
	\end{align*}
	where we used that $A^{-1}$ is continuous and $(w_i)_i$ is an ONB. Hence, $(v_i)_i$ is an ONB. Now, \eqref{eqn:second_pencil}, \eqref{eqn:third_pencil} and \eqref{eqn:fourth_pencil} become
	\begin{align*}
		(A^*A-I)v_i &= \lambda_iv_i,\\
		(A^{-1}(A^{-1})^{*}-I)v_i &= \frac{-\lambda_i}{1+\lambda_i}v_i,\\
		((A^{-1})^{*}A^{-1}-I)w_i &= \frac{-\lambda_i}{1+\lambda_i}w_i.
	\end{align*}
	Notice that $\frac{-\lambda_i}{1+\lambda_i}\in\ell^2( (-1,\infty))$, since $1+\lambda_i\rightarrow 1$ and $(\lambda_i)_i\in\ell^2( (-1,\infty))$. This proves \cref{item:covariance_mix_2,item:covariance_mix_3,item:covariance_mix_4}.

	Finally, we prove the statements about the domains of the leftmost operators in \cref{item:covariance_mix_1,item:covariance_mix_2,item:covariance_mix_3,item:covariance_mix_4}.
	By \Cref{lemma:covariance_properties}\ref{item:covariance_properties_2}, $\mathcal{C}_1^{-1/2}$ is self-adjoint. By \Cref{lemma:adjoint_of_densely_defined_operators}\ref{item:adjoint_of_densely_defined_operators_1}, $A^* \supset \mathcal{C}_2^{1/2}\mathcal{C}_1^{-1/2}$ and the latter operator is defined on $\dom{\mathcal{C}_1^{-1/2}}=\ran{\mathcal{C}_1^{1/2}}$ by the definition of composition of linear operators, c.f.\ \Cref{sec:notation}. This shows that the leftmost operator in \cref{item:covariance_mix_1}, and by symmetry also in \cref{item:covariance_mix_3}, is defined on the dense subspace $\ran{\mathcal{C}_1^{1/2}}=\ran{\mathcal{C}_2^{1/2}}$. Since $A$ is boundedly invertible, $\overline{A^{-1}(\mathcal{D})} = A^{-1}(\overline{\mathcal{D}})=\mathcal{H}$ for any dense set $\mathcal{D}\subset\mathcal{H}$. This shows that $A^{-1}(\dom{\mathcal{C}_2^{1/2}\mathcal{C}_1^{-1/2}})$ is dense in $\mathcal{H}$. Since $\dom{\mathcal{C}_2^{1/2}\mathcal{C}_1^{-1/2}\mathcal{C}_1^{-1/2}\mathcal{C}_2^{1/2}}=A^{-1}(\dom{\mathcal{C}_2^{1/2}\mathcal{C}_1^{-1/2}})$, this proves that the leftmost operator in \cref{item:covariance_mix_2}, and by symmetry also in \cref{item:covariance_mix_4}, is densely defined.
\end{proof}

\bayesianFeldmanHajek*
\begin{proof}[Proof of \Cref{prop:bayesian_feldman_hajek}]
	By \Cref{lemma:expansions_feldman_hajek} with $\mathcal{C}_1 \leftarrow \mathcal{C}_\pr$ and $\mathcal{C}_2 \leftarrow \mathcal{C}_\pos$, there exists an eigenvalue sequence $(\lambda_i)_{i\in\N}\subset\ell^2((-1,\infty))$ and ONBs $(w_i)_{i\in\N}$ and $(v_i)_i$ of $\H$ such that $v_i=\sqrt{1+\lambda_i}\mathcal{C}_\pos^{-1/2}\mathcal{C}_\pr^{1/2}w_i$ and \cref{item:covariance_mix_1,item:covariance_mix_2,item:covariance_mix_3,item:covariance_mix_4} of \Cref{lemma:expansions_feldman_hajek} hold. In particular, by \cref{item:covariance_mix_1} and the definition of $R(\cdot\Vert\cdot)$ in \eqref{eqn:feldman_hajek_operator}, $R(\mathcal{C}_\pos\Vert\mathcal{C}_\pr)=\sum_{i}^{}\lambda_i w_i\otimes w_i$.
	By \eqref{eqn:pos_precision}, it holds on $\ran{\mathcal{C}_\pr^{1/2}}$,
	\begin{align*}
		\mathcal{C}_{\pr}^{1/2}\mathcal{C}_{\pos}^{-1}\mathcal{C}_{\pr}^{1/2}-I = \mathcal{C}_{\pr}^{1/2}(\mathcal{C}_{\pr}^{-1}+H)\mathcal{C}_{\pr}^{1/2} - I =\mathcal{C}_{\pr}^{1/2}\mathcal{C}_{\pr}^{-1}\mathcal{C}_{\pr}^{1/2} +\mathcal{C}_{\pr}^{1/2}H\mathcal{C}_{\pr}^{1/2}-I.
	\end{align*}
	Now $\mathcal{C}_{\pr}^{1/2}H\mathcal{C}_{\pr}^{1/2}-I\in\mathcal{B}(\mathcal{H})$, hence it is defined on all of $\H$. The operator $\mathcal{C}_{\pr}^{1/2}\mathcal{C}_{\pr}^{-1}\mathcal{C}_{\pr}^{1/2}$ is extended by the identity operator $I$. Thus,
	$\mathcal{C}_{\pr}^{1/2}\mathcal{C}_{\pos}^{-1}\mathcal{C}_{\pr}^{1/2}-I\subset \mathcal{C}_{\pr}^{1/2}H\mathcal{C}_{\pr}^{1/2}.$
	By the uniqueness of extensions of continuous functions on the dense set $\ran{\mathcal{C}_\pr^{1/2}}$, this implies together with \cref{item:covariance_mix_4} of \Cref{lemma:expansions_feldman_hajek} that \eqref{eqn:prior_preconditioned_Hessian} holds.
	The proof of \eqref{eqn:posterior_preconditioned_Hessian} is similar: by \eqref{eqn:pos_precision}, it holds on $\ran{\mathcal{C}_\pos^{1/2}}=\ran{\mathcal{C}_{\pr}^{1/2}}$,
	\begin{align*}
		\mathcal{C}_\pos^{1/2}\mathcal{C}_\pr^{-1}\mathcal{C}_\pos^{1/2}-I=\mathcal{C}_\pos^{1/2}(\mathcal{C}_\pr^{-1}-\mathcal{C}_\pos^{-1})\mathcal{C}_\pos^{1/2} = \mathcal{C}_\pos^{1/2}(-H)\mathcal{C}_\pos^{1/2},
	\end{align*}
	and combining this with \cref{item:covariance_mix_2} of \Cref{lemma:expansions_feldman_hajek}, \eqref{eqn:posterior_preconditioned_Hessian} follows by uniqueness of the extension. 	

	We now prove the stated properties of the eigenvalues and eigenvectors.
	Recall that by \eqref{eqn:hessian}, $H\in\B_{00,n}(\H)$ is self-adjoint and non-negative. Hence $\mathcal{C}_{\pr}^{1/2}H\mathcal{C}_{\pr}^{1/2}=(\mathcal{C}_{\pr}^{1/2}H^{1/2})(\mathcal{C}_{\pr}^{1/2}H^{1/2})^*$ is also self-adjoint and non-negative, which implies that $(\tfrac{-\lambda_i}{1+\lambda_i})_{i\in\N}\subset \ell^2((-1,0])$, and thus that $(\lambda_i)_{i\in\N}\in\ell^2((-1,0])$. We thus may order $(\lambda_i)_i$ in a nondecreasing manner. Since $\mathcal{C}_\pr$ is injective on $\H$, it follows by applying \Cref{lemma:range_of_square_of_finite_rank} twice with $A\leftarrow \mathcal{C}_\pr^{1/2}H^{1/2}$ and $A\leftarrow H^{1/2}$ that 
	\begin{align*}
	\rank{\mathcal{C}_{\pr}^{1/2}H\mathcal{C}_{\pr}^{1/2}}=\rank{\mathcal{C}_\pr^{1/2}H^{1/2}(\mathcal{C}_\pr^{1/2}H^{1/2})^*}=\rank{\mathcal{C}_\pr^{1/2}H^{1/2}}=\rank{H^{1/2}}=\rank{H}.
	\end{align*}
	Therefore, $(\tfrac{-\lambda_i}{1+\lambda_i})_{i\in\N}$ contains exactly $\rank{H}\leq n$ many nonzero entries.
	It follows directly from \eqref{eqn:prior_preconditioned_Hessian}, \eqref{eqn:posterior_preconditioned_Hessian} and the fact that $\lambda_i\not=0$ for $i\leq\rank{H}$, that $w_i\in\ran{\mathcal{C}_\pr^{1/2}H\mathcal{C}_{\pr}^{1/2}}\subset \ran{\mathcal{C}_\pr^{1/2}}$ and $v_i\in\ran{\mathcal{C}_\pos^{1/2}H\mathcal{C}_\pos^{1/2}}\subset\ran{\mathcal{C}_\pos^{1/2}}=\ran{\mathcal{C}_\pr^{1/2}}$ for $i\leq \rank{H}$. By \Cref{lemma:extension_of_finite_basis_in_dense_subspace}, we can extend $(w_i)_{i=1}^{\rank{H}}$ to an ONB $(w_i')_i$ of $\mathcal{H}$ with $(w_i')_i\subset\ran{\mathcal{C}_\pr^{1/2}}$ and $w_i'=w_i$ for $i\leq \rank{H}$. We now replace $w_i$ by $w_i'$ and $v_i$ by $\mathcal{C}_\pos^{-1/2}\mathcal{C}_\pr^{1/2}w_i'$ for $i>\rank{H}$. After this replacement, the equations \eqref{eqn:prior_preconditioned_Hessian} and \eqref{eqn:posterior_preconditioned_Hessian} and $v_i=\sqrt{1+\lambda_i}\mathcal{C}_\pos^{-1/2}\mathcal{C}_\pr^{1/2}w_i$ for all $i$ remain valid, and we now have $w_i,v_i\in\ran{\mathcal{C}_\pr^{1/2}}$ for all $i$.

	By \cref{item:covariance_mix_1} of \Cref{lemma:expansions_feldman_hajek} and the fact that $(w_i)_i$ lies in the Cameron--Martin space, it follows that
	\begin{align*}
		\mathcal{C}_{\pr}^{-1/2}\mathcal{C}_{\pos}\mathcal{C}_{\pr}^{-1/2}w_i=(1+\lambda_i)w_i,\quad i\in\N.
	\end{align*}
	Applying $\mathcal{C}_{\pos}^{-1/2}\mathcal{C}_{\pr}^{1/2}$ to both sides of the equation yields \eqref{eqn:bayesian_cov_pencil}.

	\end{proof}

\gaussianDivergences*
\begin{proof}[Proof of \Cref{thm:gaussian_divergences}]
	We use the expressions for the KL and R\'enyi divergence of \cite[Theorem 14, Theorem 15]{minh_regularized_2021}. While they are stated for infinite-dimensional Hilbert spaces only, it is noted in \cite{Minh2022} that these expressions also hold for finite-dimensional Hilbert spaces; see the remarks after \cite[Theorem 3]{Minh2022}. 
	By \Cref{lemma:adjoint_of_densely_defined_operators}\ref{item:adjoint_of_densely_defined_operators_1}, $(\mathcal{C}_1^{-1/2}\mathcal{C}_2^{1/2})^*=\mathcal{C}_2^{1/2}\mathcal{C}_1^{-1/2}$ on $\ran{\mathcal{C}_1^{1/2}}$.
	The statements in the theorem now follow immediately from the expressions in \cite[Theorem 14, Theorem 15]{minh_regularized_2021}, because for $S\coloneqq -R(\mathcal{C}_2\Vert \mathcal{C}_1)\in L_2(\mathcal{H})_\R$, where $R(\cdot\Vert\cdot)$ is defined in \eqref{eqn:feldman_hajek_operator}, we have
	\begin{align*}
		\mathcal{C}_1^{1/2}(I-S)\mathcal{C}_1^{1/2} = \mathcal{C}_1^{1/2}(\mathcal{C}_1^{-1/2}\mathcal{C}_2^{1/2})(\mathcal{C}_1^{-1/2}\mathcal{C}_2^{1/2})^*\mathcal{C}_1^{1/2} = \mathcal{C}_1^{1/2}\mathcal{C}_1^{-1/2}\mathcal{C}_2\mathcal{C}_1^{-1/2}\mathcal{C}_1^{1/2} = \mathcal{C}_2,
	\end{align*}
	and $I-(1-\rho) S = \rho I + (1-\rho)(I+R(\mathcal{C}_2\Vert \mathcal{C}_1))$ for $0\leq \rho\leq 1$.
\end{proof}

\subsection{Proofs of Section \ref{sec:optimal_approximation_covariance}}
\label{subsec:proofs_for_optimal_approximation_covariance}

\finiteLoss*
\begin{proof}[Proof of \Cref{lemma:finite_loss}]
	Given that $xf'(x)>0$ for $x\neq 0$, it follows that $f'(x)<0$ for $x<0$ and $f'(x)>0$ for $x>0$. This implies, by continuity of $f$, that $f'(0)=0$. Hence $f$ has a global minimum only at $x=0$ and $f\geq 0$. Thus, also $\mathcal{L}_f\geq 0$.
	By the Lipschitz assumption on $f'$ at 0, there exists $\varepsilon\in(0,1)$ and $M_0>0$ such that $f'(x)=f'(x)-f'(0)\leq M_0\abs{x}$ for $\abs{x}\leq \varepsilon$. For $\abs{y}\leq \varepsilon$,
	\begin{align*}
		f(y) = f(y)-f(0) = \int_{0}^{y}f'(x)\d x\leq \int_{0}^{y}M_0\abs{x}\d x = \frac{1}{2}M_0y^2.
	\end{align*}
	Let $(x_i)_i\in\ell^2( (-1,\infty))$. For $N$ large enough, its tail $(x_i)_{i>N}$ lies in $(-\varepsilon,\varepsilon)$, so that the inequality above implies $\sum_{i>N}f(x_i)\leq \tfrac{1}{2}M\sum_{i>N}x_i^2\leq \tfrac{1}{2}M\norm{x}_{\ell^2}^2$. For $\mathcal{C}_1$, $\mathcal{C}_2\in\mathcal{E}$ we have $R(\mathcal{C}_2\Vert \mathcal{C}_1)\in L_2(\mathcal{H})$ and its eigenvalue sequence is square-summable. Hence $\mathcal{L}_f<\infty$ by the definition of $\mathcal{L}_f$ in \eqref{eqn:definition_spectral_f}. This proves \cref{item:properties_spectral_f}. For \cref{item:closed_under_transform}, we note $f(\eta(0))=f(0)=0$. Furthermore, we compute $\eta'(x)=-(1+x)^{-2}$ and, by the fact that $f\in\mathscr{F}$, $x(f\circ \eta)'(x)=\frac{1}{1+x}\frac{-x}{1+x}f'(\frac{-x}{1+x})>0$ for $x\not=0$. By the assumption on $f$, $\lim_{x\rightarrow\infty}f(\eta(x))=\lim_{x\rightarrow-1}f(x)=\infty$. Finally, $\eta$ is smooth, so $\eta$ and $\eta'$ are Lipschitz at 0. Therefore, $f'\circ \eta$ is Lipschitz at 0 as the composition of Lipschitz functions at 0, and $(f\circ \eta)'=(f'\circ \eta)\eta'$ is Lipschitz at 0 as the product of two Lipschitz functions at 0.
\end{proof}

\divergencesInLossClass*
\begin{proof}[Proof of \Cref{lemma:divergences_in_loss_class}]
	Notice that $f_\kl$, $f_{\ren,\rho}\in\mathcal{C}^\infty(\R)$, which implies $f_{\kl}$ and $f_{\ren,\rho}$ are locally Lipschitz on $(-1,\infty)$. We compute $xf_\kl'(x)=\frac{x^2}{2(1+x)}>0$ for $x\not=0$ and
	\begin{align*}
		f'_{\ren,\rho}(x)=&\frac{1}{2\rho(1-\rho)}\left[\frac{\rho-1}{1+x}+\frac{1-\rho}{\rho+(1-\rho)(1+x)}\right]=\frac{x}{2(1+x)[\rho+(1-\rho)(1+x)]}.
	\end{align*}
	Hence $xf'_{\ren,\rho}(x)>0$ for $x\not=0$.
	Furthermore, $f_\kl(0)=0=f_{\ren,\rho}(0)$ and $\lim_{x\rightarrow\infty}f_\kl(x)=\infty=\lim_{x\rightarrow\infty}f_{\ren,\rho}(x)$, so $f_\kl,f_{\ren,\rho}\in\mathscr{F}$. 

	The first equations in \cref{item:f_for_kl_divergence,item:f_for_renyi_divergence} follow from \Cref{thm:gaussian_divergences}.
	With $(\lambda_i)_{i\in\N}$ the eigenvalues of $R(\mathcal{C}_2\Vert\mathcal{C}_1)\in L_2(\H)$, it holds by \eqref{eqn:determinant_for_L_2} that
	\begin{align*}
		\det_2(I+R(\mathcal{C}_2\Vert\mathcal{C}_1))= \prod_{i\in\N} (1+\lambda_i)\exp( -\lambda_i )=\prod_{i\in\N}\exp(-2 f_{\kl}(\lambda_i))=\exp\biggr(-2\sum_{i\in\N} f_{\kl}(\lambda_i)\biggr),
	\end{align*}
	which proves that $D_{\kl}(\nu\vert\vert\mu) = \sum_{i\in\N} f_\kl(\lambda_i)=\mathcal{L}_{f_{\kl}}(\mathcal{C}_2\Vert\mathcal{C}_1)$. Hence \cref{item:f_for_kl_divergence} holds.

	By the spectral mapping theorem---see e.g.\ \cite[Theorem VII.1(e)]{Reed1980} for a version that does not assume that $\mathcal{H}$ is defined over the complex field $\C$---the eigenvalues of $I+R(\mathcal{C}_2\Vert\mathcal{C}_1)$ are $(1+\lambda_i)_i$, and the eigenvalues of $A_\rho\coloneqq\bigl(I+R(\mathcal{C}_2\Vert \mathcal{C}_1)\bigr)^{\rho-1}\bigl(\rho I + (1-\rho)(I+R(\mathcal{C}_2\Vert \mathcal{C}_1))\bigr)$ are $(\gamma_i)_i$, with $\gamma_i\coloneqq (1+\lambda_i)^{\rho-1}(\rho+(1-\rho)(1+\lambda_i))$, $i\in\N$. The eigenvalues of $A_\rho-I$ are then $(\gamma_i-1)_i$ and by \eqref{eqn:determinant_for_L_1},
	\begin{align*}
		\det(A_\rho) = \det(I+(A_\rho-I)) = \prod_i(1+(\gamma_i-1)) = \exp\left(\sum_i\log(\gamma_i)\right)= \exp\left( 2\rho(1-\rho) \sum_{i}^{}f_{\ren,\rho}(\lambda_i)\right).
	\end{align*}
	This shows \cref{item:f_for_renyi_divergence} holds.
	Since $\lim_{x\rightarrow -1}f_\kl(x)=\infty=\lim_{x\rightarrow -1}f_{\ren,\rho}(x)$, \cref{item:f_for_reverse_divergence} now follows directly from \cref{item:f_for_kl_divergence,item:f_for_renyi_divergence} and \Cref{lemma:finite_loss}\ref{item:closed_under_transform}.
\end{proof}

\propertiesEquivalence*
\begin{proof}[Proof of \Cref{lemma:properties_equivalence}]
	\ref{item:properties_equivalence_1}\quad If $\ran{K}\subset\ran{\mathcal{C}_1^{1/2}}=\dom{\mathcal{C}_1^{-1/2}}$, then $\mathcal{C}_1^{-1/2}K$ is well-defined in $\B(\H)$ and thus so is $\X$. By \Cref{lemma:adjoint_of_densely_defined_operators}\ref{item:adjoint_of_densely_defined_operators_1}, $(\mathcal{C}_1^{-1/2}K)^* = K^*\mathcal{C}_1^{-1/2}$ on $\ran{\mathcal{C}_1^{1/2}}$, whence $(\mathcal{C}_1^{-1/2}K)^*\mathcal{C}_1^{1/2}=K^*$ and $\mathcal{C}_1^{1/2}\X\mathcal{C}_1^{1/2}=\mathcal{C}_1-\mathcal{C}_1^{1/2}\mathcal{C}_1^{-1/2}K(\mathcal{C}_1^{-1/2}K)^*\mathcal{C}_1^{1/2}=\mathcal{C}$.

	\ref{item:properties_equivalence_2}\quad If $\mathcal{C}=\mathcal{C}_1-KK^*\geq 0$, then $\langle KK^*h,h\rangle \leq \langle \mathcal{C}_1 h,h\rangle $ for all $h\in\H$. Hence $\norm{K^*h}\leq \norm{\mathcal{C}_1^{1/2}h}$ for all $h\in\H$. By \Cref{lemma:range_inclusions}, $\ran{K}\subset\ran{\mathcal{C}_1^{1/2}}$. By \cref{item:properties_equivalence_1}, $\X$ is well-defined in $\B(\H)$ and $\mathcal{C}=\mathcal{C}_1^{1/2}\X\mathcal{C}_1^{1/2}$. Furthermore, $\langle \X\mathcal{C}_1^{1/2}h,\mathcal{C}_1^{1/2}h\rangle= \langle \mathcal{C}h,h\rangle\geq 0$ for all $h\in\H$. Since $\ran{\mathcal{C}_1^{1/2}}\subset\H$ densely, it follows that $\langle \X h,h\rangle\geq 0$ for all $h\in\H$. Conversely, if $\ran{K}\subset\ran{\mathcal{C}_1^{1/2}}$ and $\X\geq 0$, then using \cref{item:properties_equivalence_1} we find $\langle \mathcal{C}h,h\rangle=\langle \X\mathcal{C}_1^{1/2} h,\mathcal{C}_1^{1/2}h\rangle \geq 0$ for $h\in\H$.

	\ref{item:properties_equivalence_3}\quad The implication \ref{item:properties_equivalence_3_1}$\Rightarrow$\ref{item:properties_equivalence_3_3} follows by definition of $\mathcal{E}(m_1,\mathcal{C}_1)$ in \eqref{eqn:equivalent_covariances_general} and \Cref{thm:feldman--hajek}\ref{item:fh_ranges}. 

	Now, \ref{item:properties_equivalence_3_3} implies \ref{item:properties_equivalence_3_2}. Indeed, $\ran{\mathcal{C}^{1/2}}^\perp=\ker{\mathcal{C}^{1/2}}$ and $\ran{\mathcal{C}_1^{1/2}}^\perp=\ker{\mathcal{C}_1^{1/2}}=\{0\}$ by \Cref{lemma:kernel_range} and injectivity of $\mathcal{C}_1^{1/2}$. Furthermore, $\ker{\mathcal{C}^{1/2}}=\ker{\mathcal{C}}$ by \Cref{lemma:kernel_of_square}. Thus, if \ref{item:properties_equivalence_3_3} holds, then $\mathcal{C}$ is nonnegative and injective, hence positive, by \Cref{lemma:positive_is_injective_nonnegative}. 

	Next, we show that \ref{item:properties_equivalence_3_2}$\Rightarrow$\ref{item:properties_equivalence_3_1}. By \eqref{eqn:equivalent_covariances_general} and \Cref{thm:feldman--hajek}, we only need to show that $\mathcal{C}$ is trace-class and that $\mathcal{C}_1^{-1/2}\mathcal{C}^{1/2}(\mathcal{C}_1^{-1/2}\mathcal{C}^{1/2})^*-I$ is Hilbert--Schmidt. Since $\mathcal{C}_1\in L_1(\H)_\R$ and $KK^*\in\B_{00,r}(\H)\subset L_1(\H)_\R$, also $\mathcal{C}\in L_1(\H)_\R$. 
	By \Cref{lemma:adjoint_of_densely_defined_operators}\ref{item:adjoint_of_densely_defined_operators_2} $(\mathcal{C}_1^{-1/2}\mathcal{C}^{1/2})^*=\mathcal{C}^{1/2}\mathcal{C}_1^{-1/2}$ on $\ran{\mathcal{C}_1^{1/2}}$.
	Therefore, 
	\begin{align*}
		(\mathcal{C}_1^{-1/2}\mathcal{C}^{1/2})(\mathcal{C}_1^{-1/2}\mathcal{C}^{1/2})^* - I = \mathcal{C}_1^{-1/2}\mathcal{C}\mathcal{C}_1^{-1/2} - I = \X-I = -(\mathcal{C}_1^{-1/2}K)(\mathcal{C}_1^{-1/2}K)^*.
	\end{align*}
	The outermost operators $(\mathcal{C}_1^{-1/2}\mathcal{C}^{1/2})(\mathcal{C}_1^{-1/2}\mathcal{C}^{1/2})^* - I$ and $(\mathcal{C}_1^{-1/2}K)(\mathcal{C}_1^{-1/2}K)^*$ are bounded and defined on all of $\mathcal{H}$. Since $\ran{\mathcal{C}_1^{1/2}}\subset\mathcal{H}$ densely, it follows that $(\mathcal{C}_1^{-1/2}\mathcal{C}^{1/2})(\mathcal{C}_1^{-1/2}\mathcal{C}^{1/2})^* - I = -(\mathcal{C}_1^{-1/2}K)(\mathcal{C}_1^{-1/2}K)^*$ on $\H$.
	Since $K$ has finite rank, so does $(\mathcal{C}_1^{-1/2}K)(\mathcal{C}_1^{-1/2}K)^*$.
	We conclude that $(\mathcal{C}_1^{-1/2}\mathcal{C}^{1/2})(\mathcal{C}_1^{-1/2}\mathcal{C}^{1/2})^* - I\in L_2(\mathcal{H})_\R$, so that \ref{item:properties_equivalence_3_2}$\Rightarrow$\ref{item:properties_equivalence_3_1}. 

	The equivalence of \ref{item:properties_equivalence_3_3} and \ref{item:properties_equivalence_3_4} follows from \cref{item:properties_equivalence_2}. 
	
	Finally, we show \ref{item:properties_equivalence_3_4} and \ref{item:properties_equivalence_3_5} are equivalent. Note that \ref{item:properties_equivalence_3_4} implies $\mathcal{C}>0$ by the already proven equivalence \ref{item:properties_equivalence_3_2}$\Leftrightarrow$\ref{item:properties_equivalence_3_4}. Also \ref{item:properties_equivalence_3_5} implies $\mathcal{C}>0$. Indeed, $\mathcal{C}\geq 0$ by \cref{item:properties_equivalence_2}, and $\mathcal{C}=\mathcal{C}_1^{1/2}\X\mathcal{C}_1^{1/2}$ is injective as a composition of injective maps, by \cref{item:properties_equivalence_1}. Thus, by \Cref{lemma:positive_is_injective_nonnegative}, $\mathcal{C}>0$ if \ref{item:properties_equivalence_3_5} holds.
	We therefore assume that $\ran{K}\subset\ran{\mathcal{C}_1^{1/2}}$, $\X\geq 0$ and $\mathcal{C}>0$, and show that $\X$ is invertible if and only if $\ran{\mathcal{C}_1^{1/2}}=\ran{\mathcal{C}^{1/2}}$.
	Since $\X\geq 0$, $\X^{1/2}$ exists. By \ref{item:properties_equivalence_1}, $\mathcal{C}=\mathcal{C}_1^{1/2}\X\mathcal{C}_1^{1/2}=(\mathcal{C}_1^{1/2}\X^{1/2})(\mathcal{C}_1^{1/2}\X^{1/2})^*$.
	Thus, both $\mathcal{C}^{1/2}$ and $\mathcal{C}_1^{1/2}\X^{1/2}$ are (possibly non-self adjoint) square roots of $\mathcal{C}$. Now, $\mathcal{C}$ is self-adjoint and positive, hence $\overline{\ran{\mathcal{C}}}=\ker{\mathcal{C}}^\perp=\mathcal{H}$ by \Cref{lemma:kernel_range}. By \Cref{lemma:nonsymmetric_root_relation} applied with $A\leftarrow \mathcal{C}^{1/2}$ and $B\leftarrow \mathcal{C}_1^{1/2}\X^{1/2}$, there exists a Hilbert space isomorphism $Q\in\B(\H)$ such that $\mathcal{C}_1^{1/2}\X^{1/2}=\mathcal{C}^{1/2}Q$. From this we conclude two facts.
	On the one hand, if $\ran{\mathcal{C}^{1/2}}=\ran{\mathcal{C}_1^{1/2}}$, so that $\mathcal{C}_1^{-1/2}\mathcal{C}^{1/2}$ is boundedly invertible by \Cref{lemma:equivalent_cm_condition}, then this implies that $\X^{1/2} = \mathcal{C}_1^{-1/2}\mathcal{C}^{1/2}Q$ is boundedly invertible. Hence $\X$ is boundedly invertible as the composition of boundedly invertible operators.
	On the other hand, if $\X$ and hence $\X^{1/2}$ is boundedly invertible, then $\ran{\mathcal{C}^{1/2}}=\ran{\mathcal{C}^{1/2}Q}=\ran{\mathcal{C}_1^{1/2}\X^{1/2}}=\ran{\mathcal{C}_1^{1/2}}$. We conclude that \ref{item:properties_equivalence_3_4} and \ref{item:properties_equivalence_3_5} are equivalent.

	\ref{item:properties_equivalence_4}\quad Suppose that $\mathcal{C}\geq0$. By \cref{item:properties_equivalence_2,item:properties_equivalence_1}, $\ran{K}\subset\ran{\mathcal{C}_1^{1/2}}$ and $\X\coloneqq I-(\mathcal{C}_1^{-1/2}K)(\mathcal{C}_1^{-1/2}K)^*$ is a well-defined and nonnegative operator. By injectivity of $\mathcal{C}_1^{-1/2}$, we have that $\mathcal{C}_1^{-1/2}K$ and $K$ have the same rank. Thus, by \Cref{lemma:range_of_square_of_finite_rank}, $(\mathcal{C}_1^{-1/2}K)(\mathcal{C}_1^{-1/2}K)^*$ and $K$ have the same rank. We then diagonalise the nonnegative and self-adjoint operator $I-\X=(\mathcal{C}_1^{-1/2}K)(\mathcal{C}_1^{-1/2}K)^*$ as $\sum_{i=1}^{\rank{K}}d_i^2e_i\otimes e_i$, where $d_i^2\geq d_{i+1}^2>0$ and $(e_i)_i$ is an orthonormal sequence in $\H$. By nonnegativity of $\X$ and the fact that $\mathcal{C}=\mathcal{C}_1^{1/2}\X\mathcal{C}_1^{1/2}$, we have $1-d_i^2 = \langle \X e_i,e_i\rangle\geq 0$, that is, $d_i^2\in(0,1]$, for each $i\leq \rank{K}$.
	
	Furthermore, $\X$ is invertible if and only if $d_i^2\not=1$ for each $i\leq\rank{K}$ by \Cref{lemma:inverse_of_self_adj_hilbert_schmidt_perturbation}\ref{item:inverse_of_self_adj_hilbert_schmidt_perturbation_1} applied with $\delta_i\leftarrow -d_i^2$ for $i\leq \rank{K}$ and $\delta_i\leftarrow 0$ otherwise. Using \ref{item:properties_equivalence_3_5} of \cref{item:properties_equivalence_3}, it follows that the equivalent statements of \cref{item:properties_equivalence_3} hold if and only if $(d_i^2)_i\subset(0,1)$.

	Suppose the additional assumptions $\mathcal{C}>0$ and  $\ran{K}\subset\ran{\mathcal{C}_1}$ hold. The latter assumption implies $\ran{\mathcal{C}_1^{-1/2}K}\subset\ran{\mathcal{C}_1^{1/2}}$, which in turn implies $\ran{(\mathcal{C}_1^{-1/2}K)(\mathcal{C}_1^{-1/2}K)^*}\subset\ran{\mathcal{C}_1^{1/2}}$. Thus, $(e_i)_{i=1}^{\rank{K}}\subset\ran{\mathcal{C}_1^{1/2}}$. Hence, the assumption $\mathcal{C}>0$ and the fact that $\mathcal{C}=\mathcal{C}_1^{1/2}\X\mathcal{C}_1^{1/2}$ from \cref{item:properties_equivalence_1} show $1-d_i^2=\langle \X e_i,e_i\rangle=\langle \X\mathcal{C}_1^{1/2}\mathcal{C}_1^{-1/2}e_i,\mathcal{C}_1^{1/2}\mathcal{C}_1^{-1/2}e_i\rangle=\langle \mathcal{C}\mathcal{C}_1^{-1/2}e_i,\mathcal{C}_1^{-1/2}e_i\rangle>0$, showing $d_i^2<1$ for each $i\leq\rank{K}$.
\end{proof}

\propRangeOfK*

\begin{proof}[Proof of \Cref{prop:range_of_K}]
	\ref{item:range_of_K_1}\quad Suppose that $\mathcal{C}=\mathcal{C}_1-KK^*$ for some $K\in\B(\R^r,\H)$ and $\mathcal{C}\in\mathcal{E}(m_1,\mathcal{C}_1)$. By the implication \ref{item:properties_equivalence_3_1}$\Rightarrow$\ref{item:properties_equivalence_3_2} and \ref{item:properties_equivalence_3_1}$\Rightarrow$\ref{item:properties_equivalence_3_5} of \Cref{lemma:properties_equivalence}\ref{item:properties_equivalence_3}, we have that $\mathcal{C}>0$, hence $\mathcal{C}$ is injective, and that $\X\coloneqq I-(\mathcal{C}_1^{-1/2}K)(\mathcal{C}_1^{-1/2}K)^*$ is a well-defined nonnegative, self-adjoint, and invertible operator. 
	We diagonalise $\X$ as $I-\sum_{i=1}^{\rank{K}}d_i^2e_i\otimes e_i$ by \Cref{lemma:properties_equivalence}\ref{item:properties_equivalence_4}, where $(e_i)_i\subset\mathcal{H}$ is orthonormal and $(d_i^2)_i\subset(0,1)$ is nonincreasing.
	By \Cref{lemma:properties_equivalence}\ref{item:properties_equivalence_1}, $\mathcal{C}=\mathcal{C}_1^{1/2}\X\mathcal{C}_1^{1/2}$, which is the composition of three injective maps. 
	Using \Cref{lemma:inverse_of_self_adj_hilbert_schmidt_perturbation} with $\delta_i\leftarrow -d_i^2$ for $i\leq \rank{K}$ and $\delta_i\leftarrow 0$ otherwise, the inverse of $\mathcal{C}$ on $\ran{\mathcal{C}}$ is given by
	\begin{align*}
		\mathcal{C}^{-1} = \mathcal{C}_1^{-1/2}\X^{-1}\mathcal{C}_1^{-1/2} = \mathcal{C}_1^{-1/2}\left(I+\sum_{i=1}^{\rank{K}}\frac{d_i^2}{1-d_i^2}e_i\otimes e_i\right)\mathcal{C}_1^{-1/2} = \mathcal{C}_1^{-1/2}(I+ZZ^*)\mathcal{C}_1^{-1/2},
	\end{align*}
	where $Z\coloneqq \sum_{i=1}^{\rank{K}}\sqrt{\frac{d_i^2}{1-d_i^2}}e_i\otimes \varphi_i$ for any choice of ONB $(\varphi_i)_i$ of $\R^r$. Since $(d_i^2)_i\in(0,1)$, we have $\rank{Z}=\rank{K}$.

	Conversely, suppose that $\mathcal{C}$ is injective and $\mathcal{C}^{-1} = \mathcal{C}_1^{-1/2}(I+ZZ^*)\mathcal{C}_1^{-1/2}$ on $\ran{\mathcal{C}}$ for some $Z\in\B(\R^r,\H)$. Since $I+ZZ^*\geq I$, $I+ZZ^*$ is invertible. Thus, $\mathcal{C}^{-1}$ is the composition of three injective operators, and we can invert $\mathcal{C}^{-1}$ on $\ran{\mathcal{C}^{-1}}=\H$ to obtain
	\begin{align*}
		\mathcal{C}=(\mathcal{C}^{-1})^{-1} = \left( \mathcal{C}_1^{-1/2}(I+ZZ^*)\mathcal{C}_1^{-1/2} \right)^{-1} = \mathcal{C}_1^{1/2}(I+ZZ^*)^{-1}\mathcal{C}_1^{1/2}.
	\end{align*}
	By \Cref{lemma:range_of_square_of_finite_rank}, $\rank{ZZ^*}=\rank{Z}$, and we diagonalise $ZZ^*=\sum_{i=1}^{\rank{Z}}b_i^2g_i\otimes g_i$ for $b_i^2\geq b_{i+1}^2>0$ and $(g_i)_i$ an orthonormal sequence in $\H$. Then, by \Cref{lemma:inverse_of_self_adj_hilbert_schmidt_perturbation} applied with $\delta_i\leftarrow b_i^2$ for $i\leq\rank{Z}$ and $\delta_i\leftarrow 0$ otherwise, it follows that $(I+ZZ^*)^{-1}=I-\sum_{i=1}^{\rank{Z}}\frac{b_i^2}{1+b_i^2}g_i\otimes g_i$ and
	\begin{align*}
		\mathcal{C} = \mathcal{C}_1^{1/2}\left(I-\sum_{i=1}^{\rank{Z}}\frac{b_i^2}{1+b_i^2}g_i\otimes g_i\right)\mathcal{C}_1^{1/2} = \mathcal{C}_1 - \mathcal{C}_1^{1/2}\left(\sum_{i=1}^{\rank{Z}}\frac{b_i^2}{1+b_i^2}g_i\otimes g_i\right)\mathcal{C}_1^{1/2}.
	\end{align*}
	We see that $\mathcal{C}=\mathcal{C}_1-KK^*$ with $K \coloneqq \mathcal{C}_1^{1/2}\sum_{i=1}^{\rank{K}}\frac{b_i}{1+b_i^2}g_i \otimes \varphi_i$ for any choice of ONB $(\varphi_i)_i$ of $\R^r$. Hence, $\ran{K} \subset\ran{\mathcal{C}_1^{1/2}}$ and $\rank{K}=\rank{Z}$. It remains to show that $\mathcal{C}\in\mathcal{E}(m_1,\mathcal{C}_1)$. By the implication \ref{item:properties_equivalence_3_5}$\Rightarrow$\ref{item:properties_equivalence_3_1} of \Cref{lemma:properties_equivalence}\ref{item:properties_equivalence_3}, it suffices to show that $\X\coloneqq I-(\mathcal{C}_1^{-1/2}K)(\mathcal{C}_1^{-1/2}K)^*$ is nonnegative and invertible. We have $\mathcal{C}_1^{-1/2}KK^*\mathcal{C}_1^{-1/2} = \sum_{i=1}^{\rank{K}}\frac{b_i^2}{1+b_i^2}g_i\otimes g_i$ on $\ran{\mathcal{C}_1^{1/2}}$. Now, $\ran{\mathcal{C}_1^{1/2}}\subset\mathcal{H}$ densely and $\sum_{i=1}^{\rank{K}}\frac{b_i^2}{1+b_i^2}g_i\otimes g_i\in\B(\H)$. Thus, \Cref{lemma:adjoint_of_densely_defined_operators}\ref{item:adjoint_of_densely_defined_operators_1} implies $(\mathcal{C}_1^{-1/2}K)^*\supset K^*\mathcal{C}_1^{-1/2}$ and hence $\mathcal{C}_1^{-1/2}K(\mathcal{C}_1^{-1/2}K)^* = \sum_{i=1}^{\rank{K}}\frac{b_i^2}{1+b_i^2}g_i\otimes g_i$. It follows that
		$\X = I - \sum_{i=1}^{\rank{K}}\frac{b_i^2}{1+b_i^2}g_i\otimes g_i.$
		\Cref{lemma:inverse_of_self_adj_hilbert_schmidt_perturbation}\ref{item:inverse_of_self_adj_hilbert_schmidt_perturbation_1}-\ref{item:inverse_of_self_adj_hilbert_schmidt_perturbation_2}, applied with $\delta_i\leftarrow \frac{-b_i^2}{1+b_i^2}$ for $i\leq\rank{K}$ and $\delta_i\leftarrow 0$ otherwise, implies that $\X$ is nonnegative and invertible, since $\frac{-b_i^2}{1+b_i^2}>-1$ for all $i$. 

	\ref{item:range_of_K_2}\quad Suppose that $\mathcal{C}=\mathcal{C}_1-KK^*$ for some $K\in\B(\R^r,\H)$ with $\ran{K}\subset\ran{\mathcal{C}_1}$ and $\mathcal{C}\in\mathcal{E}(m_1,\mathcal{C}_1)$. 
	We first show that $\mathcal{C}$ is injective and $\ran{\mathcal{C}}=\ran{\mathcal{C}_1}$. 
	By \cref{item:range_of_K_1}, $\mathcal{C}$ is injective and $\mathcal{C}^{-1}=\mathcal{C}_1^{-1/2}(I+ZZ^*)\mathcal{C}_1^{-1/2}$ on $\ran{\mathcal{C}}$ for some $Z\in\B(\R^r,\H)$ with $\rank{Z}=\rank{K}$. By the implication \ref{item:properties_equivalence_3_1}$\Rightarrow$\ref{item:properties_equivalence_3_5} in \Cref{lemma:properties_equivalence}\ref{item:properties_equivalence_3}, it follows that $\X\coloneqq I-(\mathcal{C}_1^{-1/2}K)(\mathcal{C}_1^{-1/2}K)^*$ is a well-defined, nonnegative and invertible operator, and by the implication \ref{item:properties_equivalence_3_1}$\Rightarrow$\ref{item:properties_equivalence_3_2} that $\mathcal{C}>0$. Using \Cref{lemma:properties_equivalence}\ref{item:properties_equivalence_4}, we diagonalise $\X=I - \sum_{i=1}^{\rank{K}}d_i^2 e_i\otimes e_i$ where $(d_{i}^2)_{i=1}^{\rank{K}}\subset(0,1)$ is nonincreasing and $(e_i)_{i=1}^{\rank{K}}\subset\ran{\mathcal{C}_1^{1/2}}$ is an orthonormal sequence in $\H$. It follows that $\X$ maps $\ran{\mathcal{C}_1^{1/2}}$ onto itself. Hence $\ran{\mathcal{C}}=\ran{\mathcal{C}_1^{1/2}\X\mathcal{C}_1^{1/2}}=\ran{\mathcal{C}_1}$, where we use $\mathcal{C}=\mathcal{C}_1^{1/2}\X\mathcal{C}_1^{1/2}$ from \Cref{lemma:properties_equivalence}\ref{item:properties_equivalence_1}.

	Next, we show that we may write $\mathcal{C}^{-1}=\mathcal{C}_1^{-1}+UU^*$ on $\ran{\mathcal{C}}=\ran{\mathcal{C}_1}$ for some $U\in\B(\R^r,\H)$ satisfying $\rank{U}=\rank{K}$.
	Let $h\in\ran{\mathcal{C}_1}=\ran{\mathcal{C}}$. Since $h\in\ran{\mathcal{C}_1}\subset\ran{\mathcal{C}_1^{1/2}}$, $\mathcal{C}_1^{-1/2}h\in\ran{\mathcal{C}_1^{1/2}}$. Since $h\in\ran{\mathcal{C}}$, $(I+ZZ^*)\mathcal{C}_1^{-1/2}h\in\ran{\mathcal{C}_1^{1/2}}$. Thus, $ZZ^*\mathcal{C}_1^{-1/2}h=(I+ZZ)^*\mathcal{C}_1^{-1/2}h-\mathcal{C}_1^{-1/2}h\in\ran{\mathcal{C}_1^{1/2}}$. This shows we may write $\mathcal{C}^{-1}=\mathcal{C}_1^{-1/2}(I+ZZ^*)\mathcal{C}_1^{-1/2}=\mathcal{C}_1^{-1}+\mathcal{C}_1^{-1/2}ZZ^*\mathcal{C}_1^{-1/2}$ on $\ran{\mathcal{C}}=\ran{\mathcal{C}_1}$. With $U\coloneqq \mathcal{C}_1^{-1/2}Z$ it then holds that $U\in\B(\R^r,\H)$, and $\rank{U}=\rank{Z}=\rank{K}$ by injectivity of $\mathcal{C}_1^{-1/2}$. By \Cref{lemma:adjoint_of_densely_defined_operators}\ref{item:adjoint_of_densely_defined_operators_1}, we have $(\mathcal{C}_1^{-1/2}Z)^*=Z^*\mathcal{C}_1^{-1/2}$ on $\ran{\mathcal{C}_1^{1/2}}\supset\ran{\mathcal{C}_1}$. Consequently, $\mathcal{C}^{-1} = \mathcal{C}_1^{-1}+UU^*$ on $\ran{\mathcal{C}_1}$. This proves the `only if' direction of the statement in \cref{item:range_of_K_2}.

	For the converse implication, assume that $\mathcal{C}$ is injective, $\ran{\mathcal{C}}=\ran{\mathcal{C}}_1$ and $\mathcal{C}^{-1}=\mathcal{C}_1^{-1}+UU^*$ on $\ran{\mathcal{C}_1}$ for some $U\in\B(\R^r,\H)$.
	With $Z\coloneqq \mathcal{C}_1^{1/2}U$ it holds that $\rank{U}=\rank{Z}$ by injectivity of $\mathcal{C}_1^{1/2}$, and $\mathcal{C}^{-1}=\mathcal{C}_1^{-1/2}(I+\mathcal{C}_1^{1/2}UU^*\mathcal{C}_1^{1/2})\mathcal{C}_1^{-1/2}=\mathcal{C}_1^{-1/2}(I+ZZ^*)\mathcal{C}_1^{-1/2}$ on $\ran{\mathcal{C}_1}=\ran{\mathcal{C}}$. By \cref{item:range_of_K_1}, $\mathcal{C}\in\mathcal{E}(m_1,\mathcal{C}_1)$ and $\mathcal{C}=\mathcal{C}_1-KK^*$ for some $K\in\B(\R^r,\H)$ with $\rank{K}=\rank{Z}=\rank{U}$. It remains to show that $\ran{K}\subset\ran{\mathcal{C}_1}$. 
	As in the proof of the `only if' statement in \cref{item:range_of_K_1}, we can diagonalise $ZZ^*=\sum_{i=1}^{\rank{Z}}b_i^2g_i\otimes g_i$ and write $\mathcal{C}=\mathcal{C}_1-KK^*$ with $K\coloneqq \mathcal{C}_1^{1/2}\sum_{i=1}^{\rank{Z}}\frac{b_i}{1+b_i^2}g_i\otimes \varphi_i$ for any choice of ONB $(\varphi_i)_i$ of $\R^r$. Since $\ran{Z}=\ran{\mathcal{C}_1^{1/2}U}\subset\ran{\mathcal{C}_1^{1/2}}$, we have $\ran{ZZ^*}\subset\ran{\mathcal{C}_1^{1/2}}$, and hence $g_i\subset\ran{\mathcal{C}_1^{1/2}}$ for each $i\leq \rank{Z}$. Thus, $\ran{K}\subset\Span{\mathcal{C}_1^{1/2}g_i,\ i\leq\rank{Z}}\subset\ran{\mathcal{C}_1}$.

	\ref{item:range_of_K_3}\quad The `if' direction follows from \cref{item:range_of_K_2} and the implication \ref{item:properties_equivalence_3_1}$\Rightarrow$\ref{item:properties_equivalence_3_2} of \Cref{lemma:properties_equivalence}\ref{item:properties_equivalence_3}.
	The `only if' direction follows from \cref{item:range_of_K_2} and \Cref{lemma:properties_equivalence}\ref{item:properties_equivalence_4}. To explain the details in the `only if' direction, we assume that $K\in\B(\R^r,\H)$ is such that $\ran{K}\subset\ran{\mathcal{C}_1}$ and $\mathcal{C}=\mathcal{C}_1-KK^*>0$. Then by \Cref{lemma:properties_equivalence}\ref{item:properties_equivalence_4} it holds for $\X\coloneqq I-(\mathcal{C}_1^{-1/2}K)(\mathcal{C}_1^{-1/2}K)^*$ that $\X=I-\sum_{i=1}^{\rank{K}}d_i^2 e_i\otimes e_i$ with $(d_i^2)_{i=1}^{\rank{K}}\subset(0,1)$ and that the equivalent properties of \Cref{lemma:properties_equivalence}\ref{item:properties_equivalence_3} hold. That is, $\mathcal{C}\in\mathcal{E}(m_1,\mathcal{C}_1)$. It now follows from \cref{item:range_of_K_2} that $\mathcal{C}$ is injective, $\ran{\mathcal{C}}=\ran{\mathcal{C}_1}$ and $\mathcal{C}^{-1}=\mathcal{C}_1^{-1}+UU^*$ on $\ran{\mathcal{C}_1}$ for some $U\in\B(\R^r,\H)$ with $\rank{U}=\rank{K}$.
\end{proof}

\correspondence*
\begin{proof}[Proof of \Cref{cor:correspondence}]
	Let $r\in\N$. \Cref{item:problem_equivalence} follows from \cref{item:equivalent_approximation_classes}: since $\mathscr{C}_r^{-1} \coloneqq\{\mathcal{C}^{-1}:\ \mathcal{C}\in\mathscr{C}_r\}= \mathscr{P}_r$, we have
	\begin{align*}
		\min\{\mathcal{L}(\mathcal{C}_\pos\Vert\mathcal{P}^{-1}):\ \mathcal{P}\in\mathscr{P}_r\}= \min\{\mathcal{L}\bigr(\mathcal{C}_\pos\Vert(\mathcal{C}^{-1})^{-1}\bigr):\ \mathcal{C}^{-1}\in\mathscr{P}_r\}=\min\{\mathcal{L}(\mathcal{C}_\pos\Vert\mathcal{C}):\ \mathcal{C}\in\mathscr{C}_r\}.
	\end{align*}
	\Cref{item:equivalent_approximation_classes} follows directly from \Cref{prop:range_of_K}\ref{item:range_of_K_3} applied with $(m_1,\mathcal{C}_1)\leftarrow(0,\mathcal{C}_\pr)$ and the definitions \eqref{eqn:class_approx_covariance} and \eqref{eqn:class_approx_precision}.

\end{proof}

\problemsWellDefined*
\begin{proof}[Proof of \Cref{cor:problems_well_defined}]
	The second statement follows from the first statement, since $\mathscr{P}_r^{-1}=\mathscr{C}_r$ by \Cref{cor:correspondence}\ref{item:equivalent_approximation_classes}, and since $\mathcal{L}\in\mathscr{L}$ is finite on $\mathcal{E}^2$ by definition \eqref{eqn:definition_loss_class} and by \Cref{lemma:finite_loss}\ref{item:properties_spectral_f}.
	The first statement follows from \Cref{prop:range_of_K}\ref{item:range_of_K_2}-\ref{item:range_of_K_3} applied with $(m_1,\mathcal{C}_1)\leftarrow(0,\mathcal{C}_\pr)$ and the definition \eqref{eqn:class_approx_covariance} of $\mathscr{C}_r$.

\end{proof}

\propertiesG*
\begin{proof}[Proof of \Cref{lemma:properties_g}]
	By \Cref{cor:correspondence}\ref{item:equivalent_approximation_classes} and \Cref{cor:problems_well_defined}, $(\mathcal{C}_{\pr}^{-1}+UU^*)^{-1}\in\mathcal{E}$ for $U\in\B(\R^r,\H)$. 
	By \Cref{lemma:expansions_feldman_hajek}\ref{item:covariance_mix_2} applied with $\mathcal{C}_1\leftarrow (\mathcal{C}_\pr^{-1}+UU^*)^{-1}$ and $\mathcal{C}_2\leftarrow \mathcal{C}_\pos$, $\mathcal{C}_\pos^{1/2}(\mathcal{C}_\pr^{-1}+UU^*)\mathcal{C}_\pos^{1/2} - I$ is densely defined and there exists an ONB $(e_i)_i$ of $\H$ and eigenvalue sequence $(\gamma_i)_i\in\ell^2( (-1,\infty))$ such that 
	\begin{align*}
		\mathcal{C}_\pos^{1/2}(\mathcal{C}_\pr^{-1}+UU^*)\mathcal{C}_\pos^{1/2} - I \subset ( (\mathcal{C}_\pr^{-1}+UU^*)^{1/2}\mathcal{C}_\pos^{1/2})^*( (\mathcal{C}_\pr^{-1}+UU^*)^{1/2}\mathcal{C}_\pos^{1/2})-I = \sum_{i}^{}\gamma_i e_i\otimes e_i,
	\end{align*}
	and by comparing with the expansion in \Cref{lemma:expansions_feldman_hajek}\ref{item:covariance_mix_1}, $\sum_{i}^{}\gamma_i e_i\otimes e_i$ has the same eigenvalues as $R(\mathcal{C}_\pos\Vert (\mathcal{C}_\pr^{-1}+UU^*)^{-1})$, counting multiplicities.
	Using \eqref{eqn:posterior_preconditioned_Hessian}, the leftmost operator can be written as
	\begin{align*}
		\mathcal{C}_\pos^{1/2}(\mathcal{C}_\pr^{-1}+UU^*)\mathcal{C}_\pos^{1/2} - I &= \mathcal{C}_\pos^{1/2}\mathcal{C}_\pr^{-1}\mathcal{C}_\pos^{1/2}-I+\mathcal{C}_\pos^{1/2}UU^*\mathcal{C}_\pos^{1/2} \\
		&\subset (\mathcal{C}_\pr^{-1/2}\mathcal{C}_\pos^{1/2})^*\mathcal{C}_\pr^{-1/2}\mathcal{C}_\pos^{1/2}-I + \mathcal{C}_\pos^{1/2}UU^*\mathcal{C}_\pos^{1/2}\\
		&= \mathcal{C}_\pos^{1/2}UU^*\mathcal{C}_\pos^{1/2} - \mathcal{C}_\pos^{1/2}H\mathcal{C}_\pos^{1/2} = g(U).
	\end{align*}
	Since $\mathcal{C}_\pos^{1/2}(\mathcal{C}_\pr^{-1}+UU^*)\mathcal{C}_\pos^{1/2} - I$ is densely defined, the above continuous extension is unique, which shows that $g(U)=\sum_{i}^{}\gamma_ie_i\otimes e_i$. 
	By \Cref{prop:bayesian_feldman_hajek}, $\rank{\mathcal{C}_\pos^{1/2}H\mathcal{C}_\pos^{1/2}}=\rank{H}$.
	Thus, 	
	\begin{align*}
		\rank{g(U)} \leq \rank{\mathcal{C}_\pos^{1/2}H\mathcal{C}_\pos^{1/2}}+\rank{\mathcal{C}_\pos^{1/2} UU^*\mathcal{C}_\pos^{1/2}}\leq \rank{H}+r.
	\end{align*}
	Furthermore, $\ran{g(U)}\subset\ran{\mathcal{C}_\pos^{1/2}}=\ran{\mathcal{C}_\pr^{1/2}}$. 
	For $i\in\N$ such that $\gamma_i\not=0$, this implies $e_i\in\ran{\mathcal{C}_\pr^{1/2}}$. 
	By \Cref{lemma:extension_of_finite_basis_in_dense_subspace}, we can extend $(e_i)_{i:\gamma_i\not=0}$ to a sequence in $\ran{\mathcal{C}_\pr^{1/2}}$ which is an ONB of $\mathcal{H}$. Replacing $(e_i)_i$ with this sequence, we still have $g(U)=\sum_{i}^{}\gamma_i e_i\otimes e_i$ and now $e_i\subset \ran{\mathcal{C}_\pr^{1/2}}$ for all $i$.
\end{proof}

\differentiabilityOfDecomposition*
\begin{proof}[Proof of \Cref{lemma:differentiability_of_decomposition}]
	Let $U\in\B(\R^r,\H)$. 
	We first show that the linear map $\B(\R^r,\H)\rightarrow L_2(\H)_\R$ given by $V\mapsto \mathcal{C}_\pos^{1/2} (UV^*+VU^*)\mathcal{C}_\pos^{1/2}$ is bounded, and then identify this map as the Fr\'echet derivative of $g$ at $U$.
	Let $V\in\B(\R^r,\H)$. By \Cref{lemma:kernel_range}, $\dim{\ker{V^*}^\perp}=\rank{V}\leq r$. Then there exists an ONB $(e_i)_i$ of $\H$ for which $\Span{e_i,\ i\leq r}$ contains $\ker{V^*}^\perp$. 
	We have,
	\begin{align*}
		\norm{UV^*}_{L_2(\H)}^2 = \sum_{i}^{}\norm{UV^*e_i}^2 = \sum_{i=1}^{r}\norm{UV^*e_i}^2 \leq \sum_{i=1}^{r}\norm{U}^2\norm{V^*}^2 = r\norm{U}^2\norm{V}^2,
	\end{align*}
	where we use consecutively the definition of the $L_2(\H)$-norm, the inclusion $\ker{V^*}^\perp\subset\Span{e_1,\ldots,e_r}$, the definition of the operator norm, and $\norm{V^*}=\norm{V}$ by \cite[Proposition VI.1.4(b)]{conway_course_2007}. 
	This also shows $\norm{VU^*}_{L_2(\H)}\leq \sqrt{r}\norm{V}\norm{U}$ and $\norm{VV^*}_{L_2(\H)}\leq \sqrt{r}\norm{V}^2$.
	Thus, using the triangle inequality and the fact $\norm{TS}_{L_2(\H)}=\norm{ST}_{L_2(\H)}\leq \norm{T}\norm{S}_{L_2(\H)}$ for any $T,S\in L_2(\H)$,
	\begin{align*}
		\norm{\mathcal{C}_\pos^{1/2}(UV^*+VU^*)\mathcal{C}_\pos^{1/2}}_{L_2(\H)} \leq \norm{\mathcal{C}_\pos^{1/2}}^2\norm{UV^*+VU^*}_{L_2(\H)} \leq 2\sqrt{r}\norm{\mathcal{C}_\pos^{1/2}}^2  \norm{U}\norm{V}.
	\end{align*}
	It follows that $V\mapsto \mathcal{C}_\pos^{1/2} (UV^*+VU^*)\mathcal{C}_\pos^{1/2}$ is bounded.
	We have by \eqref{eqn:definition_g},
	\begin{align*}
		g(U+V)-g(U)
		= \mathcal{C}_\pos^{1/2}( (U+V)(U+V)^*-UU^*)\mathcal{C}_\pos^{1/2}
		=\mathcal{C}_\pos^{1/2}(VV^*+UV^*+VU^*)\mathcal{C}_\pos^{1/2}.
	\end{align*}
	Using once more the fact $\norm{TS}_{L_2(\H)}=\norm{ST}_{L_2(\H)}\leq \norm{T}\norm{S}_{L_2(\H)}$ for any $T,S\in L_2(\H)$, and the bound $\norm{VV^*}_{L_2(\mathcal{H})}\leq \sqrt{r}\norm{V}^2$ proven above, it follows that
	\begin{align*}
		\norm{g(U+V)-g(U) - \mathcal{C}_\pos^{1/2}(UV^*+VU^*)\mathcal{C}_\pos^{1/2}}_{L_2(\H)}
		&=\norm{\mathcal{C}_\pos^{1/2}(VV^*)\mathcal{C}_\pos^{1/2}}_{L_2(\H)}\\
		&\leq \norm{\mathcal{C}_\pos^{1/2}}^2\norm{VV^*}_{L_2(\H)}
		\leq  \sqrt{r}\norm{\mathcal{C}_\pos^{1/2}}^2\norm{V}^2.
	\end{align*}
	Dividing by $\norm{V}$ and letting $\norm{V}\rightarrow 0$, this shows that $g$ is differentiable and has the stated derivative.

	To show differentiability of $F_f$, let $x=(x_i)_i\in\ell^2( (-1,\infty))$, $y=(y_i)_i\in\ell^2(\R)$ and define $c_x\in \ell^2(\R)$ by $(c_x)_i= f'(x_i)$. By the assumption $f\in\mathscr{F}$, $f'$ is Lipschitz continuous in some neighbourhood $(-a,a)$ of $0$, with $a>0$ and with Lipschitz constant $M_0$. Let us take $N_x$ so large that $\abs{x_i}<a/2$ for $i>N_x$. Let $\varepsilon>0$ be arbitrary. By differentiability of $f$, we can choose $\delta_{x,\varepsilon}>0$ such that $x_i+z\in(-1,\infty)$ and $\abs{(f(x_i+z)-f(x_i)-f'(x_i)z)}<\varepsilon\abs{z}$ for $\abs{z}<\delta_{x,\varepsilon}$ and $i=1,\ldots, N_x$. We then have for $\norm{y}<\min(\delta_{x,\varepsilon},a/2,\varepsilon)$,
	\begin{align*}
		\frac{1}{\norm{y}}\abs{F_f(x+y)-F_f(x)-\langle c_x,y\rangle} &= \frac{1}{\norm{y}}\abs{\sum_{i}^{}f(x_i+y_i)-f(x_i)-f'(x_i)y_i}\\
		&\leq \sum_{i=1}^{N_x}\frac{1}{\abs{y_i}}\abs{f(x_i+y_i)-f(x_i)-f'(x_i)y_i}\\
		&+\frac{1}{\norm{y}}\sum_{i>N_x}^{}\abs{f(x_i+y_i)-f(x_i)-f'(x_i)y_i}.
	\end{align*}
	As $\abs{y_i}<\delta_{x,\varepsilon}$ for $i=1,\ldots,N_x$, the first term is bounded from above by $N_x\varepsilon$. For the second term, by the mean value theorem, for each $i>N_x$ we can find $c_i\in [x_i-\abs{y_i},x_i+\abs{y_i}]\subset(-a,a)$ such that,
	\begin{align*}
		\frac{1}{\norm{y}}\sum_{i>N_x}^{}\abs{f(x_i+y_i)-f(x_i)-f'(x_i)y_i} &=
		\frac{1}{\norm{y}}\sum_{i>N_x}^{}\abs{f'(c_i)y_i-f'(x_i)y_i} \\
		&\leq \frac{1}{\norm{y}}\sum_{i>N_x}^{}M_0\abs{c_i-x_i}\abs{y_i} \\
		&\leq \frac{1}{\norm{y}}\sum_{i>N_x}^{}M_0\abs{y_i}^2 \leq M_0\norm{y}\leq M_0\varepsilon,
	\end{align*}
	where we used the Lipschitz continuity of $f'$ in $(-a,a)$ in the first inequality, and the fact that $c_i\in [x_i-\abs{y_i},x_i+\abs{y_i}]$ in the second inequality.
	Therefore, 
	\begin{align*}
		\frac{1}{\norm{y}}\abs{F_f(x+y)-F_f(x)-\langle c_x, y\rangle} \leq (N_x+M_0)\varepsilon,
	\end{align*}
	from which we conclude that $F_f'(x)$ exists and $F_f'(x) = c_x$.
\end{proof}

\spectralDerivative*
\begin{proof}[Proof of \Cref{prop:spectral_derivative}]
	We need to relate the Fr\'echet differentiability of the composition $\G\circ \Lambda^m$, being the composition of the eigenvalue map $\Lambda^m$ on $L_2(\mathcal{Z})_\R$ as defined in \Cref{sec:notation} and a symmetric function $\G$, to Fr\'echet differentiability of $\G$ itself. To do so, we use \cite[Theorem 1.1]{Lewis1996}, which states this for the case that $\Lambda^m$ is defined on the space of symmetric matrices instead of $L_2(\mathcal{Z})$. Therefore, we identify this space of symmetric matrices with $L_2(\mathcal{Z})_\R$. The details of this identification are described in the following.

	Any statement regarding differentiability in this proof should be understood as Fr\'echet differentiability.
	Let us write Sym($m$) for the symmetric matrices on $\R^m$ endowed with the trace-inner product: $\langle A,B\rangle_{\text{Sym(}m\text{)}} = \tr{BA}$ for symmetric matrices $A,B\in\R^{m\times m}$.
	Let $(\varphi_i)_i$ be the standard basis of $\R^m$. Define $\Phi:L_2(\mathcal{Z})_\R\rightarrow \text{Sym(}m\text{)}$ by $\Phi(X)=\sum_{i,j=1}^{m}\langle Xe_j,e_i\rangle \varphi_i\otimes \varphi_j$ and let $X\in L_2(\mathcal{Z})_\R$. 
	Then $\Phi$ is an isomorphism of Hilbert spaces and by linearity of $\Phi$ we have $\Phi'(X)(X_2)=\sum_{i,j=1}^{m}\langle X_2e_j,e_i\rangle \varphi_i\otimes \varphi_j$ for all $X_2\in L_2(\mathcal{Z})_\R$.
	Furthermore, $\Lambda^m\circ\Phi^{-1}$ is the eigenvalue map on Sym($m$), where the eigenvalues are ordered in a nonincreasing way. We note that $\Phi(X)=\sum_{i=1}^{m}(\Lambda^m\circ\Phi^{-1})_i(\Phi(X))\varphi_i\otimes \varphi_i$. 
	Because $\Omega$ and $\G$ are symmetric by hypothesis, we may apply \cite[Theorem 1.1]{Lewis1996}, which states that $\G\circ(\Lambda^m\circ\Phi^{-1})$ is differentiable in $\Phi(X)$ if and only if $\G$ is differentiable in $\Lambda^m\circ\Phi^{-1}(\Phi(X))=\Lambda^m(X)$, in which case the derivative is given by 
	\begin{align*}
		(\G\circ\Lambda^m\circ\Phi^{-1})'(\Phi(X)) = \sum_{i=1}^{m}\G'\left(\Lambda^m\circ\Phi^{-1}(\Phi(X))\right)_i \varphi_i\otimes \varphi_i = \sum_{i=1}^{m}\G'(\Lambda^m(X))_i\varphi_i\otimes \varphi_i.
	\end{align*}
	By the chain rule, $\G\circ \Lambda^m$ is differentiable in $X$ if and only if $\G\circ\Lambda^m\circ\Phi^{-1}$ is differentiable in $\Phi(X)$. 
	Thus, by the above display, $\G\circ\Lambda^m$ is differentiable in $X$ if and only if $\G$ is differentiable in $\Lambda^m(X)$. Another application of the chain rule, the expression for $\Phi'$ and the previous equation then finish the proof by showing that
	\begin{align*}
		\langle (\G\circ\Lambda^m)'(X),X_2\rangle_{L_2(\mathcal{Z})} &= \langle (\G\circ\Lambda^m\circ\Phi^{-1}\circ\Phi)'(X),X_2\rangle_{L_2(\mathcal{Z})} \\
		&= \Big\langle (\G\circ\Lambda^m\circ\Phi^{-1})'(\Phi(X)),\Phi'(X)(X_2)\Big\rangle_{\text{Sym(}m\text{)}}\\
		&= \Big\langle\sum_{i=1}^{m}\G'(\Lambda^m(X))_i \varphi_i\otimes \varphi_i,\sum_{k,j=1}^{m} \langle X_2e_j,e_k\rangle \varphi_k\otimes \varphi_j\Big\rangle_{\text{Sym(}m\text{)}}\\
		&= \sum_{i=1}^{m}\G'(\Lambda^m(X))_i \langle X_2e_i,e_i\rangle\\
		&= \Big\langle\sum_{i=1}^{m}\G'(\Lambda^m(X)) e_i\otimes e_i,X_2\Big\rangle_{L_2(\mathcal{Z})},
	\end{align*}
	for any $X_2\in L_2(\mathcal{Z})_\R$.
\end{proof}

\differentiabilityFAfterLambda*
\begin{proof}[Proof of \Cref{prop:differentiability_F_after_Lambda}]
	Any statement regarding differentiability in this proof should be understood as Fr\'echet differentiability.
	For $Y\in\mathcal{W}$, we have $\ker{Y}^\perp=\ker{Y^*}^\perp=\overline{\ran{Y}}=\ran{Y}$, by \Cref{lemma:kernel_range}, $Y=Y^*$ and the finite dimensionality of $\mathcal{Z}$.
	Let $m\coloneqq \dim{\mathcal{Z}}$ and extend $(e_i)_i$ to an ONB of $\mathcal{H}$. Note that the $m^2$ operators $e_i\otimes e_j$, $i,j\leq m$, span the space $\mathcal{W}$. Therefore, $\dim{\mathcal{W}}=m^2<\infty$.
	Because finite-dimensional spaces are closed, we can define $P_\mathcal{Z}:\mathcal{H}\rightarrow\mathcal{Z}$, the orthogonal projector onto $\mathcal{Z}$. Furthermore, we let $P_m:\ell^2(\R)\rightarrow\R^m$ be the orthogonal projector onto the first $m$ coordinates of an $\ell^2$ sequence. Thus, $P_m^*$ is the natural embedding of $\R^m$ into $\ell^2(\R)$. Since \Cref{prop:spectral_derivative} is a statement on $L_2$ spaces of finite dimension, we identify $\mathcal{W}$ with $L_2(\mathcal{Z})_\R$ via $\Psi:L_2(\mathcal{H})_\R\rightarrow L_2(\mathcal{Z})_\R$, where $\Psi(Y)\coloneqq P_\mathcal{Z}\restr{Y}{\mathcal{Z}}$. 

	We first prove the result for the case in which $\Lambda$ orders the eigenvalues in nonincreasing absolute value. With $\Lambda^m$ denoting the eigenvalue map on $L_2(\mathcal{Z})_\R$ as defined in \Cref{sec:notation}, we then have $\Lambda(Y)_i = \Lambda^m(\Psi(Y))_i$ for all $i\leq m$ and $Y\in \mathcal{W}$. This implies that 
	\begin{align}
		\nonumber
		P_m\Lambda(Y)&= \Lambda^m(\Psi(Y)),\quad Y\in \mathcal{W},\\
		\label{eqn:eigenvalue_map_relation_subspace}
		\Lambda(Y)&= P_m^*\Lambda^m(\Psi(Y)),\quad Y\in \mathcal{W},
	\end{align}
	since any $Y\in\mathcal{W}$ has at most $m$ nonzero eigenvalues.
	Let $\G\coloneqq FP_m^*$. If $\pi$ is a permutation on $\{1,\ldots,m\}$ and $x\in\R^m$, then $P_m^* x = (x_1,\ldots,x_m,0,\ldots)$ and $P_m^* (x_{\pi(i)})_i = (x_{\pi(1)},\ldots,x_{\pi(m)},0,\ldots)$, and by symmetry of $F$,
	\begin{align*}
		\G( (x_{\pi(i)})_{i=1}^m) = F( P_m^*(x_{\pi(i)})_i ) = F( P_m^*x) = \G(x)
	\end{align*}
	showing that $\G$ is a symmetric function on the symmetric domain $\R^r$.
	Furthermore, by definition of $\G$ and \eqref{eqn:eigenvalue_map_relation_subspace}
	\begin{align}
		\label{eqn:composition_map_relation}
		\restr{(F\circ\Lambda)}{\mathcal{W}}(Y) = (F\circ \Lambda)(Y) = (\G\circ \Lambda^m)(\Psi(Y)),\quad Y\in \mathcal{W}.
	\end{align}
	By hypothesis, $F$ is differentiable at $\Lambda(X)$, with $X\in\mathcal{W}$. The idea of the proof is to first use \Cref{prop:spectral_derivative} to conclude differentiability of $\G\circ \Lambda^m$ at $\Psi(X)$ and then to use \eqref{eqn:composition_map_relation} and the chain rule to obtain differentiability of $\restr{(F\circ \Lambda)}{\mathcal{W}}$ at $X$ in the $L_2(\mathcal{H})$ norm topology. 

	In order to apply \Cref{prop:spectral_derivative}, we need to show that $\G$ is differentiable in $\Lambda^m(\Psi(X))$.
	By the hypothesis on $F$, $F$ is differentiable at $\Lambda(X)$ for $X\in\mathcal{W}$. 
	Furthermore, $P_m^*$ is linear, hence differentiable, and $(P_m^*)'(x)(y)=P_m^*y$ for $x,y\in\R^m$. Then, by \eqref{eqn:eigenvalue_map_relation_subspace} and the chain rule, the composition $F\circ P_m^*$ is differentiable at $\Lambda^m(\Psi(X))$, and it holds for any $y\in\R^m$,
	\begin{align*}
		\langle (FP_m^*)'(\Lambda^m(\Psi(X))),y\rangle
		&=\langle F'(P_m^*\Lambda^m(\Psi(X))),(P_m^*)'(\Lambda^m(\Psi(X)))y\rangle\\
		&=\langle F'(P_m^*\Lambda^m(\Psi(X))),P_m^*y\rangle\\
		&=\langle P_mF'(P_m^*\Lambda^m(\Psi(X))),y\rangle\\
		&=\langle P_mF'(\Lambda(X)),y\rangle,
	\end{align*}
	where we use the chain rule in the first step, the expression for the derivative $(P_m^*)'$ in the second step, the definition of the adjoint in the third step, and \eqref{eqn:eigenvalue_map_relation_subspace} in the final step.
	That is, $\G$ is differentiable at $\Lambda^m(\Psi(X))$ and 
	\begin{align}
		\G'(\Lambda^m(\Psi(X))) = P_mF'(\Lambda(X))\in\R^m.
		\label{eqn:projected_F_derivative}
	\end{align}
	We may now apply \Cref{prop:spectral_derivative} to conclude that $\G\circ\Lambda^m$ is differentiable at $\Psi(X)$. To obtain an expression for the derivative, notice that by the fact that $e_i\in\ran{\mathcal{Z}}$ for $i\leq m$ and by the hypothesised diagonalisation of $X$, we have $\Psi(X) = \sum_{i=1}^{m}\Lambda^m(\Psi(X))_ie_i\otimes e_i$, where the rank-1 operators $e_i\otimes e_i$ are now understood to act only on $\mathcal{Z}$. With \Cref{prop:spectral_derivative} we thus also obtain the expression for the derivative
	\begin{align*}
		(\G\circ\Lambda^m)'(\Psi(X)) = \sum_{i=1}^{m} \G'(\Lambda^m(\Psi(X)))_i e_i\otimes e_i = \sum_{i=1}^{m}\left(P_mF'(\Lambda(X))\right)_ie_i\otimes e_i\in L_2(\mathcal{Z})_\R,
	\end{align*}
	where for the second equation we use \eqref{eqn:projected_F_derivative}. By definition of $P_m$, $\left(P_mF'(\Lambda(X))\right)_i=F'(\Lambda(X))_i$ for $i\leq m$. Hence,
	\begin{align}
		\label{eqn:projected_composition_derivative}
		(\G\circ\Lambda^m)'(\Psi(X)) = \sum_{i=1}^{m}F'(\Lambda(X))_ie_i\otimes e_i.
	\end{align}
	Because $\Psi$ is linear, hence differentiable, the chain rule and \eqref{eqn:composition_map_relation} show that $\restr{(F\circ\Lambda)}{\mathcal{W}}$ is differentiable at $X$. To obtain the expression of the derivative, we use \eqref{eqn:composition_map_relation}, the chain rule, the fact $\Psi'(X)(Y)=\Psi(Y)$ for $Y\in\mathcal{W}$ and \eqref{eqn:projected_composition_derivative} to find
	\begin{align*}
		\langle \restr{(F\circ\Lambda)}{\mathcal{W}}'(X),Y\rangle_{L_2(\mathcal{H})} 
		&= \langle (\G\circ\Lambda^m\circ\Psi)'(X),Y\rangle_{L_2(\mathcal{H})} 
		= \langle (\G\circ\Lambda^m)'(\Psi(X)),\Psi'(X)(Y)\rangle_{L_2(\mathcal{Z})}\\
		&= \langle (\G\circ\Lambda^m)'(\Psi(X)),\Psi(Y)\rangle_{L_2(\mathcal{Z})} 
		= \Big\langle\sum_{i=1}^{m}F'(\Lambda(X))_ie_i\otimes e_i, \Psi(Y) \Big\rangle_{L_2(\mathcal{Z})}.
	\end{align*}
	Since, for $Y\in\mathcal{W}$, it holds that $\ran{Y}\subset\mathcal{Z}$ and $\ker{Y}^\perp=\ran{Y}$, we have $\ker{Y}^\perp\subset\mathcal{Z}$. Thus, we have $\ran{\Psi(Y)}=Y(\mathcal{Z})=Y(\ker{Y}^\perp)=\ran{Y}\subset\mathcal{Z}$ as subspaces of $\mathcal{H}$. For $i\leq m$ it holds that $e_i\in\mathcal{Z}$, hence $\langle e_i\otimes e_i,\Psi(Y)\rangle_{L_2(\mathcal{Z})} = \langle e_i\otimes e_i,Y\rangle_{L_2(\mathcal{H})}$, where on the right hand side we interpret $e_i\otimes e_i$ as acting on all of $\mathcal{H}$. For $i>m$, we have $e_i\in\mathcal{Z}^\perp$, so that $\ran{Y}\subset\mathcal{Z}$ implies that $\langle e_i\otimes e_i,Y\rangle_{L_2(\mathcal{H})} = 0$. Thus,
	\begin{align*}
		\langle \restr{(F\circ\Lambda)}{\mathcal{W}}'(X),Y\rangle_{L_2(\mathcal{H})} 
		=\Big\langle\sum_{i=1}^{m}F'(\Lambda(X))_ie_i\otimes e_i, Y \Big\rangle_{L_2(\mathcal{\mathcal{H}})}
		=\Big\langle\sum_{i=1}^{\infty}F'(\Lambda(X))_ie_i\otimes e_i, Y \Big\rangle_{L_2(\mathcal{\mathcal{H}})}.
	\end{align*}
	This concludes the proof for the case that $\Lambda$ orders the eigenvalues in a nonincreasing way.
	
	Finally, let us denote by $\tilde{\Lambda}$ an eigenvalue map on $L_2(\mathcal{H})_\R$ which can assign any fixed but arbitrary ordering on the eigenvalues. Let $X\in\mathcal{W}$, $X=\sum_{i=1}^{}\tilde{\Lambda}(X)_i e_i\otimes e_i$ be given and assume that $F$ is differentiable at $\tilde{\Lambda}(X)$. 
	Given $X$, there exists a permutation $\pi:\N\rightarrow\N$ such that, for the eigenvalue map $\Lambda$ from the previous part of the proof, $\tilde{\Lambda}(X)_i = \Lambda(X)_{\pi(i)}$ for all $i\in\N$. 
	Let $P_{\pi}:\ell^2(\R)\rightarrow\ell^2(\R)$ denote the permutation operator $(P_{\pi} x)_i = x_{\pi(i)}$, $i\in\N$, so $P_{\pi}^*=P_{\pi}^{-1}$. Then $\tilde{\Lambda}(X) = (P_{\pi}\Lambda)(X)$ and $X=\sum_{i}^{}\Lambda_{\pi(i)}(X)e_i\otimes e_i = \sum_{i}^{}\Lambda(X)_i e_{\pi^{-1}(i)}\otimes e_{\pi^{-1}(i)}.$
	By the previous part of the proof, $\restr{(F\circ\Lambda)}{\mathcal{W}}$ is differentiable at $X$. Because $F\circ \tilde{\Lambda}=F\circ\Lambda$ by symmetry of $F$, differentiability of $\restr{(F\circ\tilde{\Lambda})}{\mathcal{W}}$ at $X$ follows, and
	\begin{align}
		\label{eqn:unordered_derivative}
		(F\circ\tilde{\Lambda})'(X) = (F\circ \Lambda)'(X) = \sum_{i}^{}F_i'(\Lambda(X))e_{\pi^{-1}(i)}\otimes e_{\pi^{-1}(i)}.
	\end{align}
	Since $F$ is symmetric, $F\circ P_{\pi}= F$. Hence, for $x\in\ell^2(\R)$,
	\begin{align*}
		F'(x) = (F\circ P_{\pi})'(x) = P_{\pi}^*F'(P_{\pi} x) = P_{\pi}^{-1}F'(P_{\pi} x).
	\end{align*}
	Thus, $F_i'(\Lambda(X)) = F_{\pi^{-1}(i)}'(P_{\pi}\Lambda(X)) = F_{\pi^{-1}(i)}'(\tilde{\Lambda}(X))$. From \eqref{eqn:unordered_derivative} it now follows that
	\begin{align*}
		(F\circ\tilde{\Lambda})'(X) = \sum_{i}^{}F_{\pi^{-1}(i)}'(\tilde{\Lambda}(X)) e_{\pi^{-1}(i)}\otimes e_{\pi^{-1}(i)} = \sum_{i}^{}F_i'(\tilde{\Lambda}(X))e_i\otimes e_i.
	\end{align*}
\end{proof}

\differentiabilityJ*
\begin{proof}[Proof of \Cref{prop:differentiability_J}]
	Let $U,V\in\B(\R^r,\mathcal{H})$. Define 
	\begin{align*}
		\mathcal{Z}\coloneqq \ran{\mathcal{C}_\pos^{1/2}UU^*\mathcal{C}_\pos^{1/2}}+\ran{\mathcal{C}_\pos^{1/2}(UV^*+VU^*)\mathcal{C}_\pos^{1/2}}+\ran{\mathcal{C}_\pos^{1/2}VV^*\mathcal{C}_\pos^{1/2}}+\ran{\mathcal{C}_\pos^{1/2}H\mathcal{C}_\pos^{1/2}}
	\end{align*}
	and $\mathcal{W}\coloneqq \{X\in L_2(\mathcal{H})_\R:\ \ran{X}\subset\mathcal{Z}\}\subset L_2(\mathcal{H})_\R$. Then $\dim{\mathcal{Z}}<\infty$ since $U$, $V$ and $H$ are finite-rank, and $\ran{g(U+tV)}\subset \mathcal{Z}$ for all $t\in\R$ by definition \eqref{eqn:definition_g} of $g$, hence $g(U+tV)\in\mathcal{W}$ for all $t\in\R$. Thus, $F_f\circ\Lambda\circ g(U+tV) = \restr{(F_f\circ\Lambda)}{\mathcal{W}}\circ g(U+tV)$ for all $t\in\R$. By \Cref{lemma:differentiability_of_decomposition} and \Cref{prop:differentiability_F_after_Lambda}, $\restr{(F_f\circ\Lambda)}{\mathcal{W}}$ is Fr\'echet differentiable. By \Cref{lemma:differentiability_of_decomposition}, $g$ is Fr\'echet differentiable. In particular, $g$ is Gateaux differentiable at $U$ in the direction $V$. Hence, by the chain rule, $J_f$ is Gateaux differentiable at $U$ in the direction $V$. To compute the derivative, we recall that $(e_j\otimes e_k)_{j,k}$ is an ONB of $L_2(\H)$. 
	The Gateaux derivative of $J_f$ at $U$ in the direction $V$ is
	\begin{align*}
		J_f'(U)(V) &= (\restr{(F_f\circ\Lambda)}{\mathcal{W}}\circ g)'(U)(V) \\
		&= \Big\langle \restr{(F_f\circ\Lambda)}{\mathcal{W}}'(g(U)),g'(U)(V)\Big\rangle_{L_2(\mathcal{H})}\\
		&= \Big\langle \sum_{i}^{}f'\left(\Lambda_i(g(U))\right)e_i\otimes e_i, g'(U)(V)\Big\rangle_{L_2(\mathcal{H})}\\
		&= \sum_{j,k}^{}\Big\langle \sum_{i}^{}f'\left(\Lambda_i(g(U))\right)e_i\otimes e_i, e_j\otimes e_k\Big\rangle_{L_2(\mathcal{H})}\langle g'(U)(V),e_j\otimes e_k\rangle_{L_2(\mathcal{H})}\\
		&= \sum_{i}^{}f'\left(\Lambda_i(g(U))\right)\langle g'(U)(V), e_i\otimes e_i\rangle_{L_2(\mathcal{H})}.
	\end{align*}
	The second equation follows by the chain rule. The third equation follows from the expression for the derivative in \Cref{prop:differentiability_F_after_Lambda} applied with $X\leftarrow g(U)$ and the expression for $F_f'$ in \Cref{lemma:differentiability_of_decomposition}. The fourth and fifth equations use the property that $(e_j\otimes e_k)_{j,k}$ is an ONB of $L_2(\H)$. 
	Using the formula for $g'(U)(V)$ from \Cref{lemma:differentiability_of_decomposition},
	\begin{align*}
		J_f'(U)(V) &= \sum_{i}^{}f'\left(\Lambda_i(g(U))\right)\langle \mathcal{C}_\pos^{1/2}(UV^*+VU^*)\mathcal{C}_\pos^{1/2}, e_i\otimes e_i\rangle_{L_2(\mathcal{H})}\\
		&= \sum_{i}^{}f'\left(\Lambda_i(g(U))\right)\langle \mathcal{C}_\pos^{1/2}(UV^*+VU^*)\mathcal{C}_\pos^{1/2}e_i, e_i\rangle\\
		&= 2\sum_{i}^{}f'\left(\Lambda_i(g(U))\right)\langle \mathcal{C}_\pos^{1/2}e_i,VU^* \mathcal{C}_\pos^{1/2}e_i\rangle.
	\end{align*}
\end{proof}

\coercivity*
\begin{proof}[Proof of \Cref{lemma:coercivity}]
	Let $f\in\mathscr{F}$ and let $(U_n)_n$ be a sequence in $\B(\R^r,\mathcal{V})$ such that $\norm{U_n}\rightarrow\infty$. Then, by \Cref{lemma:cstar_property}, $\norm{U_nU_n^*}=\norm{U_n}^2\rightarrow\infty$. Since $f(x)\rightarrow\infty$ for $\norm{x}\rightarrow\infty$ and $f$ is bounded from below, it is enough to show that there is an eigenvalue $\alpha_n$ of $g(U_n)$ with $\abs{\alpha_n}\rightarrow\infty$. Since $g(U_n)$ is self-adjoint and compact by definition \eqref{eqn:definition_g}, $\norm{g(U_n)} = \max_i\abs{\Lambda_i(g(U_n))}$ by \Cref{lemma:norm_of_compact_self_adjoint}, and we must therefore show that $\norm{g(U_n)}\rightarrow\infty$. For this, it is enough to find a bounded sequence $h_n\in\mathcal{H}$, such that $\norm{g(U_n)h_n}\rightarrow\infty$.

	For any $h\in\mathcal{H}$, we have by the triangle inequality, \eqref{eqn:definition_g} and \Cref{prop:bayesian_feldman_hajek}
	\begin{align}
		\norm{g(U)h} &= \norm{\mathcal{C}_\pos^{1/2} U_nU_n^*\mathcal{C}_\pos^{1/2}h - \mathcal{C}_\pos^{1/2}H\mathcal{C}_\pos^{1/2}h} \nonumber \\
		&\geq \norm{\mathcal{C}_\pos^{1/2} U_nU_n^*\mathcal{C}_\pos^{1/2}h} - \norm{\mathcal{C}_\pos^{1/2}H\mathcal{C}_\pos^{1/2}h} \nonumber \\
		&\geq \norm{\mathcal{C}_\pos^{1/2} U_nU_n^*\mathcal{C}_\pos^{1/2}h} - 1. \label{eqn:lower_bound_g_U}
	\end{align}
	Let us write $m\coloneqq \dim{\mathcal{V}}$. For each $n$, let us diagonalise $U_nU_n^* = \sum_{j=1}^{m}\beta_{n,j}\psi_{n,j}\otimes\psi_{n,j}$, where $(\psi_{n,j})_{j=1}^m$ forms an ONB of $\mathcal{V}$ and $\beta_{n,j}\geq 0$. Define $j_n\coloneqq \argmax_{j\leq m}\beta_{n,j}$, so that $\beta_{n,j_n}$ is the largest eigenvalue of $U_nU_n^*$. As $U_nU_n^*$ is self-adjoint, $\beta_{n,j_n}=\norm{U_nU_n^*}$ by \Cref{lemma:norm_of_compact_self_adjoint}, showing $\beta_{n,j_n}\rightarrow\infty$. 
	Let $\varepsilon>0$. By density of $\ran{\mathcal{C}^{1/2}_\pos}$, for each $j\leq m$ we can choose $k_{j}\in\H$ which satisfies $\norm{\psi_{1,j}-\mathcal{C}_\pos^{1/2}k_{j}}\leq \varepsilon$. 
	Let us decompose $\psi_{n,j_n}=\sum_{j=1}^{m}\langle \psi_{n,j_n},\psi_{1,j}\rangle \psi_{1,j}$ and define $h_n\coloneqq \sum_{j=1}^{m}\langle \psi_{n,j_n},\psi_{1,j}\rangle k_j$. Note that $\norm{h_n}\leq C$ with $C\coloneqq m\max_j\norm{k_j}$ by the Cauchy--Schwarz inequality. By further application of the Cauchy--Schwarz inequality,
	\begin{align*}
		\norm{\psi_{n,j_n}-\mathcal{C}_{\pos}^{1/2}h_n}^2 &= \sum_{j}^{}\langle \psi_{n,j_n},\psi_{1,j}\rangle^2\norm{\psi_{1,j}-\mathcal{C}_{\pos}^{1/2}k_j}^2 \\
		&+ 2\sum_{i\not=j}^{}\langle \psi_{n,j_n},\psi_{1,i}\rangle \langle \psi_{n,j_n},\psi_{1,j}\rangle\langle \psi_{1,i}-\mathcal{C}_{\pos}^{1/2}k_i,\psi_{1,j}-\mathcal{C}_{\pos}^{1/2}k_j\rangle\\
		&\leq m\varepsilon^2 + 2m(m-1)\varepsilon^2 = m(2m-1)\varepsilon^2.
	\end{align*}
	It follows that for $\varepsilon$ small enough, there exists $c>0$ such that
	\begin{align*}
		\langle \psi_{n,j_n},\mathcal{C}_\pos^{1/2}h_n\rangle = \frac{1}{2}\left( \norm{\psi_{n,j_n}}^2+\norm{\mathcal{C}_\pos^{1/2}h_n}^2-\norm{\psi_{n,j_n}-\mathcal{C}_\pos^{1/2}h_n}^2 \right)\geq \frac{1}{2}(1+0-m(2m-1)\varepsilon^2)>c.
	\end{align*}
	By the Cauchy--Schwarz inequality, the bound $\norm{h_n}\leq C$, the fact that $\mathcal{C}_\pr^{1/2}$ is self-adjoint, the given diagonalisation of $U_nU_n^*$ and the previous lower bound,
	\begin{align*}
		\norm{\mathcal{C}_\pos^{1/2} U_n U_n^*\mathcal{C}_{\pos}^{1/2}h_n} &\geq C^{-1}{\langle \mathcal{C}_\pos^{1/2} U_n U_n^*\mathcal{C}_\pos^{1/2} h_n,h_n\rangle}=C^{-1}\langle U_nU_n^*\mathcal{C}_\pos^{1/2}h_{n},\mathcal{C}_\pos^{1/2}h_{n}\rangle\\
		&= C^{-1}\sum_{j}^{}\beta_{n,j} \abs{\langle \psi_{n,j},\mathcal{C}_{\pos}^{1/2}h_n\rangle}^2 \geq C^{-1}\beta_{n,j_n}\abs{\langle \psi_{n,j_n},\mathcal{C}_{\pos}^{1/2}h_n\rangle}^2\geq c^2C^{-1}\beta_{n,j_n}\rightarrow\infty.
	\end{align*}
	Combining this with \eqref{eqn:lower_bound_g_U}, we have thus found a bounded sequence $(h_n)_n$ with $\norm{g(U_n)h_n}\rightarrow\infty$.
	Finally, by \Cref{prop:differentiability_J}, $J_f$ is differentiable. The conclusion follows because a coercive, differentiable function on a finite-dimensional space $\B(\R^r,\mathcal{V})$ has a global minimum, that is attained only among its stationary points. 
\end{proof}

\optCovarianceAndPrecision*
\begin{proof}[Proof of \Cref{thm:opt_covariance_and_precision}]
	Let $f\in\mathscr{F}$. By \Cref{prop:differentiability_J}, $J_f$ is Gateaux differentiable.
	It follows from \cite[Theorem 12.4.5 (i)]{bogachevRealFunctionalAnalysis2020} that local minimisers of $J_f$ are stationary points, i.e.\ have Gateaux derivative equal to 0.
	The idea of the proof is to find among all stationary points of $J_f$ the stationary points that minimise $J_f$, and use the coercivity of $J_f$ over finite dimensional subspaces of $\mathcal{B}(\R^r,\mathcal{H})$ to conclude that these stationary points are global minimisers. We then relate these minimisers to the solutions of \Cref{prob:optimal_precision,prob:optimal_covariance}.

	\textbf{Step 1}: characterisation of the stationary points of $J_f$.

	Let $U\in\B(\R^r,\H)$. Let $(\gamma_i)_i$ and $(e_i)_i$ be as in \Cref{lemma:properties_g}, so that $e_i\in\ran{\mathcal{C}_\pr^{1/2}}=\ran{\mathcal{C}_\pos^{1/2}}$ and $g(U) = \sum_{i}^{}\gamma_ie_i\otimes e_i$.
	By \Cref{prop:differentiability_J}, the Gateaux derivative $J_f'(g(U))\in L_2^*\simeq L_2$ at $U\in\B(\R^r,\mathcal{H})$ is given by
	\begin{align*}
		J_f'(U)(V)= 2\sum_{i}^{}f'(\gamma_i)\langle \mathcal{C}_\pos^{1/2}e_i,VU^* \mathcal{C}_\pos^{1/2}e_i\rangle,\quad V\in\B(\R^r,\mathcal{H}).
	\end{align*}
	Thus, $U$ is a stationary point of $J_f$ if and only if for all $V\in\B(\R^r,\H)$,
	\begin{align*}
		\sum_{i}^{}f'(\gamma_i)\langle \mathcal{C}_\pos^{1/2}e_i,VU^* \mathcal{C}_\pos^{1/2}e_i\rangle =0.
	\end{align*}
	Since $f\in\mathscr{F}$, it follows from \Cref{lemma:finite_loss}\ref{item:properties_spectral_f} that $f'(\gamma_i)=0$ if and only if $\gamma_i=0$. 
	For an arbitrary fixed $j$, if $\gamma_j\not=0$, this implies that $U^*\mathcal{C}_\pos^{1/2}e_j=0$ for a stationary point $U$, as otherwise there exists $\varphi\in \R^r$ such that $\langle \varphi,U^*\mathcal{C}_\pos^{1/2}e_j\rangle_{\R^r}\not=0$ and the choice $V=\mathcal{C}_\pos^{-1/2}e_j\otimes \varphi$ furnishes a contradiction with $U$ being stationary. Indeed, in this case,
	\begin{align*}
		\sum_{i}^{}f'(\gamma_i)\langle \mathcal{C}_\pos^{1/2}e_i, (\mathcal{C}_\pos^{-1/2}e_j\otimes\varphi)U^*\mathcal{C}_\pos^{1/2}e_i\rangle 
		&= \sum_{i}^{}f'(\gamma_i)\langle \mathcal{C}_\pos^{1/2}e_i,\mathcal{C}_\pos^{-1/2}e_j\rangle \langle U^*\mathcal{C}_\pos^{1/2}e_i,\varphi\rangle \\
		&= f'(\gamma_j)\langle \varphi,U^*\mathcal{C}_\pos^{1/2}e_j\rangle
		\not=0.
	\end{align*}
	Hence, if $U$ is a stationary point, then $\gamma_i U^*\mathcal{C}_\pos^{1/2}e_i=0$ for all $i$. Conversely, if $\gamma_iU^*\mathcal{C}_\pos^{1/2}e_i=0$ for all $i$, then for each $i$ it holds that $U^*\mathcal{C}_\pos^{1/2}e_i=0$ or $\gamma_i=0$, showing that $f'(\gamma_i)\langle \mathcal{C}_\pos^{1/2}e_i,VU^*\mathcal{C}_\pos^{1/2}e_i\rangle=0$ for all $i$ and all $V$. Hence $U$ is a stationary point of $J_f$ if and only if $\gamma_i U^*\mathcal{C}_\pos^{1/2}e_i=0$ for all $i$.

	Multiplying $g(U)=\sum_{i}^{}\gamma_i e_i\otimes e_i$ from the right by $\mathcal{C}_\pos^{1/2}UU^*\mathcal{C}_\pos^{1/2}$ and using \eqref{eqn:definition_g}, 
	\begin{align*}
		\mathcal{C}_\pos^{1/2}(UU^*-H)\mathcal{C}_\pos UU^*\mathcal{C}_\pos^{1/2} &= \left(\sum_{i}^{}\gamma_i e_i\otimes e_i\right) \mathcal{C}_\pos^{1/2}UU^*\mathcal{C}_\pos^{1/2}
		= \left(\sum_{i}^{} e_i\otimes(\gamma_iU^*\mathcal{C}_\pos^{1/2}e_i)\right) U^*\mathcal{C}_\pos^{1/2},
	\end{align*}
	where the second equation follows since $(u\otimes v)AA^*w=u\langle AA^*w,v\rangle=u\langle A^*w,A^*v\rangle=(u\otimes (A^*v))A^*w$ for suitable $u$, $v$, $w$, and $A$.
	Thus, if $U\in\B(\R^r,\mathcal{H})$ is a stationary point of $J_f$, then
	\begin{align}
		\label{eqn:stationary_point_condition}
		(\mathcal{C}_\pos^{1/2}H\mathcal{C}_\pos^{1/2})( \mathcal{C}_\pos^{1/2}UU^*\mathcal{C}_\pos^{1/2}) = (\mathcal{C}_\pos^{1/2}UU^*\mathcal{C}_\pos^{1/2})^2.
	\end{align}
	Conversely, if \eqref{eqn:stationary_point_condition} holds, then $\sum_{i}^{}\langle \gamma_iU^*\mathcal{C}_\pos^{1/2}e_i,U^*\mathcal{C}_\pos^{1/2}h\rangle e_i=0$ for all $h\in\H$,
	because \eqref{eqn:stationary_point_condition} is equivalent to $\mathcal{C}_\pos^{1/2}(UU^*-H)\mathcal{C}_\pos UU^*\mathcal{C}_\pos^{1/2}=0$, and 
	\begin{equation*}
		\mathcal{C}_\pos^{1/2}(UU^*-H)\mathcal{C}_\pos UU^*\mathcal{C}_\pos^{1/2}h=\left(\sum_{i}^{} e_i\otimes(\gamma_iU^*\mathcal{C}_\pos^{1/2}e_i)\right) U^*\mathcal{C}_\pos^{1/2}h=\sum_{i} \langle \gamma_i U^* \mathcal{C}_\pos^{1/2}e_i, U^*\mathcal{C}_\pos^{1/2}h\rangle e_i.
	\end{equation*}
	Since $(e_i)_i$ is an ONB, this implies $\gamma_i\langle \mathcal{C}_\pos^{1/2}UU^*\mathcal{C}_\pos^{1/2}e_i,h\rangle=0$ for all $h\in\mathcal{H}$ and for all $i$. 
	If $\gamma_i\not=0$ for some $i$, then taking $h=\mathcal{C}_\pos^{1/2}UU^*\mathcal{C}_\pos^{1/2}e_i$ we get $\norm{\mathcal{C}_\pos^{1/2}UU^*\mathcal{C}_\pos^{1/2}e_i}=0$. Therefore, $\mathcal{C}_\pos^{1/2}e_i\in\ker{\mathcal{C}_\pos^{1/2}UU^*}=\ker{UU^*}=\ker{U^*}$ by injectivity of $\mathcal{C}_\pos^{1/2}$ and \Cref{lemma:kernel_of_square}, showing $\gamma_i U^*\mathcal{C}_\pos^{1/2}e_i=0$ for all $i$. Thus, $U$ satisfies \eqref{eqn:stationary_point_condition} if and only if $U$ is a stationary point.

	By injectivity of $\mathcal{C}_\pos^{1/2}$ and \Cref{lemma:range_of_square_of_finite_rank}, we have $\rank{\mathcal{C}_\pos^{1/2}UU^*\mathcal{C}_\pos^{1/2}}=\rank{\mathcal{C}_\pr^{1/2}U(\mathcal{C}_\pr^{1/2}U)^*}=\rank{\mathcal{C}_\pos^{1/2}U}=\rank{U}$. Hence $\mathcal{C}_\pos^{1/2}UU^*\mathcal{C}_\pos^{1/2}$ is a non-negative and self-adjoint operator of rank at most $r$. 
	In particular, it has $k$ many nonzero eigenvalues for some $k\leq r$. 
	Suppose $U$ satisfies \eqref{eqn:stationary_point_condition}, so that the corresponding $k$ eigenpairs are also eigenpairs of $\mathcal{C}_\pos^{1/2}H\mathcal{C}_\pos^{1/2}$.
	By \Cref{prop:bayesian_feldman_hajek}, there exists a nondecreasing sequence $(\lambda_i)_i\in\ell^2( (-1,0])$ with exactly $\rank{H}$ nonzero entries and ONBs $(w_i)_i$ and $(v_i)_i$ of $\mathcal{H}$ with $w_i,v_i\in\ran{\mathcal{C}_\pr^{1/2}}$ and $v_i=\sqrt{1+\lambda_i}\mathcal{C}_\pos^{-1/2}\mathcal{C}_\pr^{1/2}w_i$ for all $i$, such that
		$\mathcal{C}_\pos^{1/2}H\mathcal{C}_\pos^{1/2} = \sum_{i}^{}(-\lambda_i)v_i\otimes v_i$.
	It follows that there exists a set of $k$ distinct indices $\{i_1,\ldots,i_k\}\subset\{1,\ldots,\rank{H}\}$ such that 
	\begin{align}
		\label{eqn:stationary_point_condition_2}
		\mathcal{C}_\pos^{1/2}UU^*\mathcal{C}_\pos^{1/2} &= \sum_{j=1}^{k}(-\lambda_{i_j})v_{i_j}\otimes v_{i_j}.
	\end{align}
	Conversely, if $U$ satisfies \eqref{eqn:stationary_point_condition_2} for some distinct indices $\{i_1,\ldots,i_k\}\subset\{1,\ldots,\rank{H}\}$, then by using \eqref{eqn:stationary_point_condition_2} and $\mathcal{C}_\pos^{1/2}H\mathcal{C}_\pos^{1/2} = \sum_{i}^{}(-\lambda_i)v_i\otimes v_i$ and direct computation, $U$ also satisfies \eqref{eqn:stationary_point_condition}. Thus, $U$ is a stationary point if and only if there exists an index set $\mathcal{I}\subset\{1,\ldots,\rank{H}\}$ of cardinality at most $r$ and containing distinct indices such that
	\begin{align*}
		UU^* &= \sum_{j\in\mathcal{I}}^{}(-\lambda_{j})\mathcal{C}_\pos^{-1/2}v_{j}\otimes \mathcal{C}_\pos^{-1/2}v_{j}.
	\end{align*}
	In particular, by \Cref{lemma:range_of_square_of_finite_rank}, the stationary points $U$ of $J_f$ satisfy $U\in\B(\R^r,\mathcal{V})$,
	where,
	\begin{align*}
		\mathcal{V}&\coloneqq\Span{\mathcal{C}_\pos^{-1/2}v_i,i=1,\ldots,\rank{H}}\subset\H.
	\end{align*}

	\textbf{Step 2}: computation of the stationary points of $J_f$ with minimal value of $J_f$.

	Since $v_i=\sqrt{1+\lambda_i}\mathcal{C}_\pos^{-1/2}\mathcal{C}_\pr^{1/2}w_i$ for all $i$, it holds by \eqref{eqn:bayesian_cov_pencil} that $v_i = \sqrt{1+\lambda_i}^{-1}\mathcal{C}_\pos^{1/2}\mathcal{C}_\pr^{-1/2}w_i$ for all $i$.
	Thus, if $U$ is a stationary point such that the above expression of $UU^*$ holds for the index set $\mathcal{I}$, then we have by \eqref{eqn:definition_g} and \eqref{eqn:posterior_preconditioned_Hessian}
	\begin{align*}
		g(U) &= \mathcal{C}_\pos^{1/2}UU^*\mathcal{C}_\pos^{1/2} - \mathcal{C}_\pos^{1/2}H\mathcal{C}_\pos^{1/2}\\
		&= 
		\sum_{j\in\mathcal{I}}^{}(-\lambda_{j})\frac{\mathcal{C}_\pos^{1/2}\mathcal{C}_{\pr}^{-1/2}w_{j}}{\sqrt{1+\lambda_{j}}}\otimes \frac{\mathcal{C}_\pos^{1/2}\mathcal{C}_{\pr}^{-1/2}w_{j}}{\sqrt{1+\lambda_{j}}}
		+ \sum_{i}^{}\lambda_{i} \frac{\mathcal{C}_\pos^{1/2}\mathcal{C}_{\pr}^{-1/2}w_{i}}{\sqrt{1+\lambda_{i}}}\otimes \frac{\mathcal{C}_\pos^{1/2}\mathcal{C}_{\pr}^{-1/2}w_{i}}{\sqrt{1+\lambda_{i}}},
	\end{align*}
	and hence the eigenvalues of $g(U)$ form the set $\{0\}\cup \{\lambda_i:\ i\not\in\mathcal{I}\}\subset(-1,0]$. Since $f\in\mathscr{F}$, it holds that $f(0)=0$, and it then follows from \eqref{eqn:J_expression} that $J_f(U)=\sum_{i\not\in \mathcal{I}}^{}f(\lambda_i)$. 
	Furthermore, $f$ is decreasing on $(-1,0]$. Let $\hat{U}$ be a stationary point corresponding to the index set $\mathcal{I}$. Then $\hat{U}$ minimises $J_f$ among its stationary points if and only if the sequence $(\lambda_i)_{i\in \mathcal{I}}$ contains the $r$ most negative elements of $(\lambda_i)_i$. In turn, this is the case if and only if $\mathcal{I}=\mathcal{I}^\opt$ where $\mathcal{I}^\opt$ is any index set for which $\{\lambda_i:\ i\in \mathcal{I}^\opt\}=\{\lambda_1,\ldots,\lambda_r\}$. The set $\mathcal{I}^\opt$ is uniquely defined if and only if $\lambda_r<\lambda_{r+1}$, in which case $\mathcal{I}^\opt=\{1,\ldots,r\}$.
	Thus, using once more $v_i=\sqrt{1+\lambda_i}^{-1}\mathcal{C}_\pos^{1/2}\mathcal{C}_\pr^{-1/2}w_i$, $\hat{U}$ satisfies
	\begin{align}
		\hat{U}\hat{U}^* = \sum_{i=1}^{r}\frac{-\lambda_{i}}{1+\lambda_i}\mathcal{C}_{\pr}^{-1/2}w_{i}\otimes\mathcal{C}_{\pr}^{-1/2}w_{i},
		\label{eqn:optimal_U}
	\end{align}
	 and $J_f(\hat{U})=\sum_{i>r}^{}f(\lambda_i)$. 
	 Now, $\hat{U}\hat{U}^*$ is uniquely defined if and only if either $\lambda_{r+1}=0$ or $\mathcal{I}^\opt$ is uniquely defined.
	 Hence $\hat{U}\hat{U}^*$ is unique if and only if either $\lambda_{r+1}=0$ or $\lambda_r<\lambda_{r+1}$.

	 \textbf{Step 3}: identification of the stationary points with minimal value of $J_f$ as the minimisers of $J_f$.
	
	 To prove that the minima of $J_f$ are precisely the stationary points $\hat{U}$ defined in \textbf{Step 2}, we first show for fixed $U\in\B(\R^r,\H)$ that $\restr{J_f}{\B(\R^r,\mathcal{V}+\ran{U})}'(U)=0$ implies $J_f'(U)=0$. Using \Cref{lemma:properties_g}, we can diagonalise $g(U)=\sum_{i}^{}\gamma_i e_i\otimes e_i$ with $(e_i)_i\subset\ran{\mathcal{C}_\pos^{1/2}}$. Using the fact that $\mathcal{C}_{\pos}^{1/2}H\mathcal{C}_{\pos}^{1/2}=\sum_i (-\lambda_i) v_i\otimes v_i$, and using the definition of $\mathcal{V}$, it follows that $\mathcal{C}_\pos^{1/2}\mathcal{V}=\ran{\mathcal{C}_\pos^{1/2}H\mathcal{C}_\pos^{1/2}}$, and by \Cref{lemma:range_of_square_of_finite_rank}, $\ran{\mathcal{C}_\pos^{1/2}UU^*\mathcal{C}_\pos^{1/2}}=\ran{\mathcal{C}_\pos^{1/2}U}$. Thus, for each $j$ for which $\gamma_j\not=0$, the identity $\gamma_j e_j = g(U)e_j = \mathcal{C}_\pos^{1/2}UU^*\mathcal{C}_\pos^{1/2}e_j-\mathcal{C}_\pos^{1/2}H\mathcal{C}_\pos^{1/2}e_j$ implies that $e_j\in\ran{\mathcal{C}_\pos^{1/2}UU^*\mathcal{C}_\pos^{1/2}}+\ran{\mathcal{C}_\pos^{1/2}H\mathcal{C}_\pos^{1/2}} \subset \ran{\mathcal{C}_\pos^{1/2}U}+\mathcal{C}_\pos^{1/2}\mathcal{V}$. Hence $\mathcal{C}_\pos^{-1/2}e_j\in\ran{U}+\mathcal{V}$. Now, by the expression of the derivative of $J_f$ in \Cref{prop:differentiability_J}, $\restr{J_f}{\B(\R^r,\mathcal{V}+\ran{U})}'(U)=0$ implies $2\sum_{i}^{}f'(\gamma_i)\langle \mathcal{C}_\pos^{1/2}e_i,VU^* \mathcal{C}_\pos^{1/2}e_i\rangle=0$ for all $V\in\B(\R^r,\mathcal{V}+\ran{U})$. For any $\varphi\in\R^r$, $V\coloneqq \mathcal{C}_\pos^{-1/2}e_j\otimes \varphi\in\B(\R^r,\mathcal{V}+\ran{U})$ and hence $0=2\sum_{i}^{}f'(\gamma_i)\langle \mathcal{C}_\pos^{1/2}e_i,\mathcal{C}_\pos^{-1/2}e_j\rangle\langle \varphi,U^*\mathcal{C}_\pos^{1/2}e_i\rangle=2f'(\gamma_j)\langle\varphi,U^*\mathcal{C}_\pos^{1/2}e_j\rangle$. Since $\gamma_j\not=0$, $f'(\gamma_j)\not=0$ by \Cref{lemma:finite_loss}\ref{item:properties_spectral_f}. Thus, $U^*\mathcal{C}_\pos^{1/2}e_j=0$. We conclude that $\gamma_iU^*\mathcal{C}_\pos^{1/2}e_i=0$ for all $i$. As was shown in \textbf{Step 1} of the proof, it then holds that $J_f'(U)$.

	 By \Cref{lemma:coercivity}, $\hat{U}$ minimises $J_f$ over $\B(\R^r,\mathcal{V})$ for the space $\mathcal{V}$ defined above.
	 Furthermore, if $\tilde{U}\in\B(\R^r,\H)$ with $\ran{\tilde{U}}\not\subset\mathcal{V}$, then $\tilde{U}$ is not a stationary point of $J_f$, because a necessary condition for $\tilde{U}$ to be a stationary point of $J_f$ is that $\ran{\tilde{U}}\subset\mathcal{V}$, by \textbf{Step 1}. By the previous paragraph, $\tilde{U}$ is not a stationary point of $J_f$ restricted to $\B(\R^r,\mathcal{V}+\ran{\tilde{U}})$. Since $\hat{U}\in\B(\R^r,\mathcal{V})\subset\B(\R^r,\mathcal{V}+\ran{\tilde{U}})$, since $\tilde{U}\in\mathcal{B}(\R^r,\mathcal{V}+\ran{\tilde{U}})$ and since $J_f$ is coercive over $\B(\R^r,\mathcal{V}+\ran{\tilde{U}})$ by \Cref{lemma:coercivity}, it follows that $J_f(\tilde{U})>J_f(\hat{U})$. Thus, $\hat{U}$ is a global minimiser of $J_f$. 

	 \textbf{Step 4}: identification of the solutions of \Cref{prob:optimal_precision,prob:optimal_covariance}.

	Since $J_f(\hat{U})=\mathcal{L}_f(\mathcal{C}_\pos\Vert (\mathcal{C}_\pr^{-1}+\hat{U}\hat{U}^*)^{-1})$ by \eqref{eqn:definition_Jf}, it follows that the operator $\mathcal{P}^\opt_r$ in \eqref{eqn:optimal_precision} solves \Cref{prob:optimal_precision}.
	By \Cref{cor:correspondence}\ref{item:problem_equivalence}, $(\mathcal{P}^\opt_r)^{-1}$ solves \Cref{prob:optimal_covariance}. It remains to show that $\mathcal{C}^\opt_r$ defined in \eqref{eqn:optimal_covariance} satisfies $\mathcal{C}^\opt_r=(\mathcal{P}^\opt_r)^{-1}$. Since taking the inverse is a bijective operation, uniqueness of $\mathcal{C}^\opt_r$ is then implied by uniqueness of $\mathcal{P}^\opt_r$. 	
	Now, by \eqref{eqn:optimal_precision}, \Cref{lemma:inverse_of_self_adj_hilbert_schmidt_perturbation} with $\delta_i\leftarrow \lambda_i$, and \eqref{eqn:optimal_covariance},
	\begin{align*}
		(\mathcal{P}^\opt_r)^{-1} 
		&= \left( \mathcal{C}_{\pr}^{-1}+\sum_{i=1}^{r}\frac{-\lambda_i}{1+\lambda_i}\mathcal{C}_{\pr}^{-1/2}w_i\otimes \mathcal{C}_{\pr}^{-1/2}w_i \right)^{-1} 
		= \left( \mathcal{C}_{\pr}^{-1/2}\left(I-\sum_{i=1}^{r}\frac{-\lambda_i}{1+\lambda_i}w_i\otimes w_i\right)\mathcal{C}_{\pr}^{-1/2}\right)^{-1} \\
		&= \mathcal{C}_{\pr}^{1/2}\left(I+\sum_{i=1}^{r}\lambda_iw_i\otimes w_i\right)\mathcal{C}_{\pr}^{1/2}
		= \mathcal{C}_{\pr} - \sum_{i=1}^{r}(-\lambda_i)\mathcal{C}_{\pr}^{1/2}w_i\otimes \mathcal{C}_{\pr}^{1/2}w_i
		= \mathcal{C}^\opt_r.
	\end{align*}
\end{proof}

\bibliographystyle{abbrv}
\bibliography{references}

\begin{thebibliography}{10}

\bibitem{Agapiou2017}
S.~Agapiou, O.~Papaspiliopoulos, D.~Sanz-Alonso, and A.~M. Stuart.
\newblock {Importance Sampling: Intrinsic Dimension and Computational Cost}.
\newblock {\em Statist. Sci.}, 32(3):405 -- 431, 2017.

\bibitem{Alexanderian2016}
A.~Alexanderian, P.~J. Gloor, and O.~Ghattas.
\newblock On {{Bayesian A-}} and {{D-Optimal Experimental Designs}} in
  {{Infinite Dimensions}}.
\newblock {\em Bayesian Anal.}, 11(3):671--695, 2016.

\bibitem{Baptista2022}
R.~Baptista, Y.~Marzouk, and O.~Zahm.
\newblock Gradient-based data and parameter dimension reduction for
  {{Bayesian}} models: An information theoretic perspective.
\newblock arXiv:2207.08670, 2022.

\bibitem{bogachev_gaussian_1998}
V.~Bogachev.
\newblock {\em Gaussian Measures}, volume~62 of {\em Mathematical {{Surveys}}
  and {{Monographs}}}.
\newblock American Mathematical Society, 1998.

\bibitem{bogachevRealFunctionalAnalysis2020}
V.~Bogachev and O.~G. Smolyanov.
\newblock {\em Real and {{Functional Analysis}}}, volume~4 of {\em Moscow
  {{Lectures}}}.
\newblock Springer International Publishing, 2020.

\bibitem{Bolin2023}
D.~Bolin and K.~Kirchner.
\newblock Equivalence of measures and asymptotically optimal linear prediction
  for {{Gaussian}} random fields with fractional-order covariance operators.
\newblock {\em Bernoulli}, 29(2):1476--1504, 2023.

\bibitem{Bui-Thanh2012b}
T.~{Bui-Thanh}, C.~Burstedde, O.~Ghattas, J.~Martin, G.~Stadler, and L.~C.
  Wilcox.
\newblock Extreme-scale {{UQ}} for {{Bayesian}} inverse problems governed by
  {{PDEs}}.
\newblock In {\em 2012 {{Int}}. {{Conf}}. {{High Perform}}. {{Comput}}.
  {{Netw}}. {{Storage Anal}}.}, pages 1--11. IEEE, 2012.

\bibitem{Bui-Thanh2013}
T.~{Bui-Thanh}, O.~Ghattas, J.~Martin, and G.~Stadler.
\newblock A {{Computational Framework}} for {{Infinite-Dimensional Bayesian
  Inverse Problems Part I}}: {{The Linearized Case}}, with {{Application}} to
  {{Global Seismic Inversion}}.
\newblock {\em SIAM J. Sci. Comput.}, 35(6):A2494--A2523, 2013.

\bibitem{PartII}
G.~Carere and H.~C. Lie.
\newblock Optimal low-rank posterior mean and distribution approximation in
  linear {Gaussian} inverse problems on {Hilbert} spaces.
\newblock arXiv:2503.24209, 2025.

\bibitem{conway_course_2007}
J.~B. Conway.
\newblock {\em A {{Course}} in {{Functional Analysis}}}, volume~96 of {\em
  Graduate {{Texts}} in {{Mathematics}}}.
\newblock Springer, 2007.

\bibitem{Cui2016b}
T.~Cui, K.~J.~H. Law, and Y.~M. Marzouk.
\newblock Dimension-independent likelihood-informed {{MCMC}}.
\newblock {\em J. Comput. Phys.}, 304:109--137, 2016.

\bibitem{Cui2014}
T.~Cui, J.~Martin, Y.~Marzouk, A.~Solonen, and A.~Spantini.
\newblock Likelihood-informed dimension reduction for nonlinear inverse
  problems.
\newblock {\em Inverse Problems}, 30(11):28, 2014.

\bibitem{da_prato_stochastic_2014}
G.~Da~Prato and J.~Zabczyk.
\newblock {\em Stochastic {{Equations}} in {{Infinite Dimensions}}}.
\newblock Encyclopedia of {{Mathematics}} and Its {{Applications}}. Cambridge
  University Press, second edition, 2014.

\bibitem{Eldredge2016}
N.~Eldredge.
\newblock Analysis and {{Probability}} on {{Infinite-Dimensional Spaces}}.
\newblock arXiv:1607.03591, 2016.

\bibitem{engl_regularization_1996}
H.~W. Engl, M.~Hanke, and A.~Neubauer.
\newblock {\em Regularization of {{Inverse Problems}}}, volume 375 of {\em
  Mathematics and {{Its Applications}}}.
\newblock Springer Dordrecht, first edition, 1996.

\bibitem{Flath2011}
H.~P. Flath, L.~C. Wilcox, V.~Akcelik, J.~Hill, B.~Van Bloemen~Waanders, and
  O.~Ghattas.
\newblock Fast algorithms for {B}ayesian uncertainty quantification in
  large-scale linear inverse problems based on low-rank partial {H}essian
  approximations.
\newblock {\em SIAM J. Sci. Comput.}, 33(1):407--342, 2011.

\bibitem{Giraldi2018}
L.~Giraldi, O.~{Le Maître}, I.~Hoteit, and O.~M. Knio.
\newblock Optimal projection of observations in a {B}ayesian setting.
\newblock {\em Comput. Statist. Data Anal.}, 124:252--276, 2018.

\bibitem{hinze_optimization_2009}
M.~Hinze, R.~Pinnau, M.~Ulbrich, and S.~Ulbrich.
\newblock {\em Optimization with {{PDE Constraints}}}, volume~23 of {\em
  Mathematical {{Modelling}}: {{Theory}} and {{Applications}}}.
\newblock Springer Netherlands, 2009.

\bibitem{Hsing2015}
T.~Hsing and R.~Eubank.
\newblock {\em Theoretical {{Foundations}} of {{Functional Data Analysis}},
  with an {{Introduction}} to {{Linear Operators}}}.
\newblock Wiley {{Series}} in {{Probability}} and {{Statistics}}. John Wiley \&
  Sons, Ltd, Hoboken, 2015.

\bibitem{Huan2024}
X.~Huan, J.~Jagalur, and Y.~Marzouk.
\newblock Optimal experimental design: {{Formulations}} and computations.
\newblock {\em Acta Numer.}, 33:715--840, 2024.

\bibitem{Jagalur-Mohan2021}
J.~{Jagalur-Mohan} and Y.~Marzouk.
\newblock Batch greedy maximization of non-submodular functions: {{Guarantees}}
  and applications to experimental design.
\newblock {\em J. Mach. Learn. Res.}, 22:62, 2021.

\bibitem{Lewis1996}
A.~S. Lewis.
\newblock Derivatives of {{Spectral Functions}}.
\newblock {\em Math. Oper. Res.}, 21(3):576--588, 1996.

\bibitem{Li2024b}
M.~T.~C. Li, T.~Cui, F.~Li, Y.~Marzouk, and O.~Zahm.
\newblock Sharp detection of low-dimensional structure in probability measures
  via dimensional logarithmic {S}obolev inequalities.
\newblock arXiv:2406.13036, 2024.

\bibitem{Li2024a}
M.~T.~C. Li, Y.~Marzouk, and O.~Zahm.
\newblock {Principal feature detection via $\phi$-Sobolev inequalities}.
\newblock {\em Bernoulli}, 30(4):2979 -- 3003, 2024.

\bibitem{minh_regularized_2021}
H.~Q. Minh.
\newblock Regularized {{Divergences Between Covariance Operators}} and
  {{Gaussian Measures}} on {{Hilbert Spaces}}.
\newblock {\em J. Theor. Probab.}, 34(2):580--643, 2021.

\bibitem{Minh2022}
H.~Q. Minh.
\newblock Kullback-{{Leibler}} and {{Renyi}} divergences in reproducing kernel
  {{Hilbert}} space and {{Gaussian}} process settings.
\newblock arXiv:2207.08406, 2022.

\bibitem{Ordentlich2025}
O.~Ordentlich and Y.~Polyanskiy.
\newblock Optimal {{Quantization}} for {{Matrix Multiplication}}.
\newblock arXiv:2410.13780, 2025.

\bibitem{Pinski2015}
F.~J. Pinski, G.~Simpson, A.~M. Stuart, and H.~Weber.
\newblock Kullback--{Leibler} approximation for probability measures on
  infinite dimensional spaces.
\newblock {\em SIAM J. Math. Anal.}, 47(6):4091--4122, 2015.

\bibitem{Reed1980}
M.~Reed and B.~Simon.
\newblock {\em Methods of Modern Mathematical Physics. {{I}}: {{Functional}}
  Analysis. {{Rev}}. and Enl. Ed}, volume~1 of {\em Methods of {{Modern
  Mathematical Physics}}}.
\newblock Academic Press, 1980.

\bibitem{simon_notes_1977}
B.~Simon.
\newblock Notes on infinite determinants of {{Hilbert}} space operators.
\newblock {\em Adv. Math.}, 24(3):244--273, 1977.

\bibitem{Simon2005}
B.~Simon.
\newblock {\em Trace {{Ideals}} and {{Their Applications}}}, volume 120 of {\em
  Mathematical {{Surveys}} and {{Monographs}}}.
\newblock American Mathematical Society, Providence, second edition, 2005.

\bibitem{Spantini2017}
A.~Spantini, T.~Cui, K.~Willcox, L.~Tenorio, and Y.~Marzouk.
\newblock Goal-oriented optimal approximations of {B}ayesian linear inverse
  problems.
\newblock {\em SIAM J. Sci. Comput.}, 39(5):S167--S196, 2017.

\bibitem{Spantini2015}
A.~Spantini, A.~Solonen, T.~Cui, J.~Martin, L.~Tenorio, and Y.~Marzouk.
\newblock Optimal low-rank approximations of {B}ayesian linear inverse
  problems.
\newblock {\em SIAM J. Sci. Comput.}, 37(6):A2451--A2487, 2015.

\bibitem{Stuart2010}
A.~M. Stuart.
\newblock Inverse problems: A {B}ayesian perspective.
\newblock {\em Acta Numer.}, 19:451–559, 2010.

\bibitem{vanErven2014}
T.~{van Erven} and P.~Harremos.
\newblock R{\'e}nyi {{Divergence}} and {{Kullback-Leibler Divergence}}.
\newblock {\em IEEE Trans. Inform. Theory}, 60(7):3797--3820, 2014.

\bibitem{Zahm2022}
O.~Zahm, T.~Cui, K.~Law, A.~Spantini, and Y.~Marzouk.
\newblock Certified dimension reduction in nonlinear {B}ayesian inverse
  problems.
\newblock {\em Math. Comp.}, 91(336):1789--1835, 2022.

\end{thebibliography}

\end{document}